\documentclass[11pt,reqno]{amsart}
\usepackage{color, amsmath,amssymb, amsfonts, amstext,amsthm, latexsym}
\usepackage[active]{srcltx}
\allowdisplaybreaks
\setlength{\textwidth}{15cm}
\setlength{\textheight}{22cm}
\hoffset -10mm
\voffset -10mm

\newcommand{\RR}{{\mathbb R}}
\newcommand{\HH}{{\mathbb H}}

\newcommand{\EE}{{\mathbb E}}

\newcommand{\LL}{{\mathbb L}}
  
\newcommand{\PP}{{\mathbb P}}

\numberwithin{equation}{section}
\newtheorem{theorem}{Theorem}[section]
\newtheorem{defn}[theorem]{Definition}

\newtheorem{lemma}[theorem]{Lemma}
\newtheorem{remark}[theorem]{Remark}
\newtheorem{prop}[theorem]{Proposition}

\begin{document}
\title[Strong Convergence of time Euler schemes for 3D NS]
{Strong $L^2$ convergence of time Euler schemes  
 \\ for stochastic  3D Brinkman-Forchheimer-Navier-Stokes equations}

\author[H. Bessaih]{Hakima Bessaih}
\address{Florida International University, Mathematics and Statistics Department, 11200 SW 8th Street, Miami, FL 33199, United States}
\email{hbessaih@fiu.edu}

\author[A. Millet]{ Annie Millet}
\address{SAMM, EA 4543,
Universit\'e Paris 1 Panth\'eon Sorbonne, 90 Rue de
Tolbiac, 75634 Paris Cedex France {\it and} Laboratoire de
Probabilit\'es, Statistique et Mod\'elisation, UMR 8001, 
  Universit\'es Paris~6-Paris~7} 
\email{amillet@univ-paris1.fr}


\subjclass[2000]{ Primary 60H15, 60H35; Secondary 76D06, 76M35.} 

\keywords{Stochastic Navier-Sokes equations, time Euler schemes, 
strong convergence, implicit time discretization,  Brinkman Forchheimer}

\begin{abstract}
We prove that some time Euler schemes for the 3D Navier-Stokes equations modified by adding a Brinkman-Forchheimer term and
 a random perturbation converge in $L^2(\Omega)$.
This extends previous results  concerning  the strong  rate
 of convergence of some time discretization schemes for the 2D Navier Stokes equations. 
Unlike  the 2D case,  our  proposed  3D model  with  the Brinkman-Forchheimer term  allows for  a strong 
 rate  of convergence 
of order almost 1/2,  that is independent of the viscosity parameter. 
\end{abstract}

\maketitle


\section{Introduction}\label{s1} \smallskip

An incompressible fluid flow dynamic  can be  described by the so-called incompressible Navier-Stokes equations  (NSEs). 
The fluid flow is defined by a velocity field  $u$ and a pressure term $\pi$ that evolve in a very particular way. 
These equations are parametrized by the viscosity coefficient $\nu>0$. 
Many questions are open in the 3D setting.
In this paper, we will focus on the 3D incompressible Navier-Stokes equations with  a smoothing term of Brinkman-Forchheimer type,  
 in a bounded domain   $D=[0,L]^3$ of $\RR^3$,  and 
subject to an external forcing defined as:
\begin{align} \label{3D-NS}
 \partial_t u - \nu \Delta u + (u\cdot \nabla) u + a |u|^{2\alpha} u + \nabla \pi & = G(u) dW\quad \mbox{\rm in } \quad (0,T)\times D,\\
 \mbox{\rm div }u&=0 \quad \mbox{\rm in } \quad (0,T)\times D,\nonumber 
 \end{align}
for $a>0$,  $\alpha \in [1,+\infty)$ and some terminal time  $T>0$.  The process  
$u: \Omega\times [0,T]\times D  \to \RR^3$  is  the velocity field 
 with initial condition $u_0$ in $D$,  and  periodic boundary conditions $u(t,x+L v_i)=u(t,x)$ on  $(0,T)\times \partial D$, 
 where $v_i$, $i=1,2, 3$ denotes the canonical basis of $\RR^3$,  and $\pi : \Omega\times [0,T]\times D  \to \RR$  is  the  pressure. 
 Note that similar computations using the restriction to a bounded domain as a technical step would enable to deal with $D=\RR^3$
 (with no boundary condition).
 In order to focus on the main issue, this will not be treated here.

  Here $G$ is a diffusion coefficient with global Lipschitz conditions and linear growth and the driving noise $W$ is a Wiener process defined
 of a filtered probability space  $(\Omega, {\mathcal F}, ({\mathcal F}_t),  \PP)$.
In 2D, there is an extensive  literature concerning the deterministic  NSEs
 and we refer to the books of Temam; see \cite{Tem, Temam-84} for known 
results. The stochastic setting has  also been widely investigated in dimension 2, see \cite{FG} for some very general results and the references therein. 
Unique global weak and strong solutions (in the PDE sense) are constructed for both additive and multiplicative noise, 
and without being exhaustive, we refer to \cite{Breckner,  ChuMil}. 

Global well posedness in the 3D case is a famous open problem, and can be proved with some additional smoothing term such
as either a Brinkman Forchheimer nonlinearity  to model porous media, or some  rotating fluid term. Let us mention that these  models
can be used with some anisotropic viscosity, that is no viscosity in one direction (see e.g. \cite{BTZ} and \cite{CDGG}). 
 The stochastic case has been investigated as well by several authors among which F.~Flandoli, M.~R\"ockner  and M.~Romito; 
 see for example \cite{F} for
 an account of remaining open problems. The anisotropic  3D case with a  stochastic perturbation 
has been studied in \cite{FM} for rotating fluids, and  in \cite{BeMi_anisotropic} for a Brinkman Forchheimer
modification.
\medskip

Numerical schemes and algorithms  were  introduced to best approximate and construct solutions for PDEs. 
A similar approach has started to emerge for stochastic models,   in particular SPDEs,  and has known a strong
 interest by the probability community.  
Many algorithms  based on either finite difference, finite elements  or spectral Galerkin methods 
(for the space discretization), and on either Euler, Crank-Nicolson or   splitting  schemes 
(for the temporal discretization) 
have been introduced for both the linear and nonlinear cases. Their rates of convergence have been widely investigated.
The literature on numerical analysis for SPDEs is now very extensive. 
 Models having either linear,  global Lipschitz properties or more generally some monotonicity properties are well developed in
an   extensive literature, see \cite{Ben1, Ben2}. In this case the convergence is proven to be in mean square. 
When nonlinearities are involved that are not of Lipschitz or monotone type,   a rate of  convergence 
 in mean square is more difficult to  obtain.  
  Indeed, because of the stochastic perturbation, there is no way of using the Gronwall lemma after taking the expectation of the
   error bound   
  because it involves a nonlinear term that is usually in a quadratic form. 
  One way of getting around it is to localize the nonlinear term in order to get a linear inequality, and then use the Gronwall lemma. 
  This gives rise to  a rate of   convergence in probability, that was first introduced by J.~Printems \cite{Pri}. 
\smallskip

Discretizations of the 2D stochastic Navier-Stokes equations with a multiplicative noise 
  were  investigated in several papers.  
 The following ones provide a rate of convergence in probability
 of time implicit Euler or splitting schemes \cite{BrCaPr}, \cite{CarPro}, \cite{Dor} and \cite{BeBrMi}. The Euler scheme is coupled with
 a finite element space discretization.  Note that \cite{Dor} tackles the problem of weak
 convergence, that is  convergence in distribution, while   in case of an additive noise \cite{Breckner} proves almost sure and mean square
  convergence without giving an explicit rate.
 
  Strong (i.e. $L^2(\Omega)$) convergence for a time splitting scheme, for an implicit time Euler scheme
 - coupled with a finite elements approximation -
of the stochastic 2D Navier-Stokes equations
  were   proven in \cite{BeMi_multi}, \cite{BeMi_FEM} for a multiplicative noise or ``additive" noise.
  In the latter case a polynomial  (suboptimal)  speed of
 convergence is proven. 

 In  \cite{BeMi_additive}, strong convergence  of  a space-time discretization (implicit Euler scheme  in time and finite elements approximation 
 in space) 
 for  stochastic 2D Navier-Stokes equations on the torus with an additive noise is studied.  
 The rate of convergence is  "optimal", namely almost 1/2 in time and 1 in space. However, since exponential moments of the $H^1$-norm
 of the solution is used, some constraints on the strength of the noise  have  to be imposed. 
 In the additive case, no localization is needed and the
 argument is based on a direct use of the discrete Gronwall lemma. 
  \medskip

 In this paper, we study a time implicit Euler scheme   \eqref{4.1}  for a stochastic 3D Navier Stokes equation 
 with a modification, by adding a smoothing term of Brinkman Forchheimer type. Unlike the 2D case - and thanks to this extra term - 
 neither localization nor exponential moments are   needed, and we obtain the ``optimal" 
convergence   rate  
 with no constraint on the noise and the viscosity. For technical reasons, we only have to  assume that the exponent $\alpha$ of the
Brinkman Forchheimer term $|u|^{2\alpha} u$ in \eqref{3D-NS} belongs to the interval $[1, \frac{3}{2}]$. 
 The proof is based on a careful study of the time regularity of the solution  in both  the $L^2$ and $H^1$ norms, and the discrete Gronwall lemma. 
 \smallskip

The paper is organized as follows. Section \ref{preliminary} describes the functional setting of the model. 
 In Section \ref{s-gwp} we describe  the stochastic perturbation,
state the global well posedness of the solution to \eqref{3D-NS} and its  moment estimates in various norms. If the exponent $\alpha =1$ we have
to impose that the coefficient $a$ is ``large". 
The way the Brinkman-Forchheimer term helps to obtain estimates for the bilinear part is described in Section \ref{Ap1} of the Appendix.
The proof of the existence and uniqueness relies on a Galerkin approximation. It is quite classical, 
 similar to  the anisotropic case described in \cite{BeMi_anisotropic}. The proof is sketched in Sections \ref{Ap2} and \ref{Ap3} of the Appendix 
 for the sake of completeness. 
Section \ref{s-increments} is devoted  to the moment time increments  of the solution to \eqref{3D-NS} in $L^2$ and $H^1$;
 the results are  crucial to obtain the
  optimal strong   convergence rate. In Section \ref{sEuler} we describe the fully
  implicit time Euler scheme, prove its existence and some moment
 estimates. Finally, in Section \ref{s-convergence} we prove the strong (that is $L^2(\Omega)$)  convergence  rate  of this scheme. 
\smallskip

As usual, except if specified otherwise, $C$ denotes a positive constant that may change  throughout the paper,
 and $C(a)$ denotes  a positive constant depending on some parameter $a$.

\section{Notations and preliminary results}\label{preliminary} 
Let  $D= [0,L]^3$  with periodic boundary conditions, 
${\mathbb L}^p:=L^p(D)^3$ (resp. ${\mathbb W}^{k,p}:=W^{k,p}(D)^3$)   be  the usual Lebesgue and Sobolev spaces of 
vector-valued functions
endowed with the norms  $\|\cdot \|_{\LL^p}$  (resp. $\|\cdot \|_{{\mathbb W}^{k,p}}$). If $p=2$, 
set ${\mathbb H}^k:={\mathbb W}^{k,2}$ and we denote
by $\| \, \cdot \|_k$ the ${\mathbb H}^k$ norm, $k=0,1, \cdots$; note that $\|\, . \, \|_0 = \| \, \cdot \, \|_{\LL^2}$. 
 In what follows, we will  consider velocity fields that have  zero divergence on $D$. 
  Let $H$ (resp. $V$)  be the subspace  of $\LL^2$ (resp. $\HH^1$) defined by 
\begin{align*}
  H:=& \{ u\in \LL^2 \, : \,{\rm div }\,  u=0 \; \mbox {\rm weakly in }  D  \; \mbox{\rm with periodic boundary conditions} \}, \\ 
  V:= & H \cap {\mathbb W}^{1,2}.
  \end{align*}
   $H$ and $V$ are   separable  Hilbert spaces. 
      The space $H$ inherits its inner product  denoted by $(\cdot,\cdot)$ and its norm $\|\, \cdot\, \|_H$ from $\LL^2$.
   The norm in $V$, inherited from ${\mathbb W}^{1,2}$, is denoted by $\| \cdot \|_V$; we let $(\cdot ,\cdot)_V$ denote the associated inner product.   
   Moreover,   let  $V'$ be the dual space of $V$ 
   with respect to the  pivot space $H$,  and 
  $\langle\cdot,\cdot\rangle$ denotes the duality between $V'$ and $V$.   
  
  Let  $\Pi : \LL^2 \to H$ denote the Leray projection, and set  $A=- \Pi \Delta$ with its domain  
  $\mbox{\rm Dom}(A)={\mathbb W}^{2,2}\cap H$.

  Let $b:V^3 \to \RR$ denote the trilinear map defined by 
  \[ b(u_1,u_2,u_3):=\int_D  \big(\big[ u_1(x)\cdot \nabla\big] u_2(x)\big)\cdot u_3(x)\, dx, \]
  which by the incompressibility condition satisfies  $b(u_1,u_2, u_3)=-b(u_1,u_3,u_2)$  for $u_i \in V$, $i=1,2,3$. 
  There exists a continuous bilinear map $B:V\times V \mapsto
  V'$ such that
  \[ \langle B(u_1,u_2), u_3\rangle = b(u_1,u_2,u_3), \quad \mbox{\rm for all } \; u_i\in V, \; i=1,2,3.\]
  The map  $B$ satisfies the following antisymmetry relations:
  \begin{equation} \label{B}
  \langle B(u_1,u_2), u_3\rangle = - \langle B(u_1,u_3), u_2\rangle , \quad \langle B(u_1,u_2),  u_2\rangle = 0 
  \qquad \mbox {\rm for all } \quad u_i\in V.
  \end{equation}
  For $u,v\in V$,  set $B(u,v):= \Pi \big( \big[ u\cdot \nabla\big]  v\big) $.

  In dimension 3, the Gagliardo-Nirenberg inequality implies that for $p\in [2,6]$, ${\mathbb H}^1\subset \LL^p$; more precisely 
    \begin{equation} \label{GagNir}
  \|u\|_{\LL^4} \leq \bar{C}_4 \; \|u\|_{\LL^2}^{\frac{1}{4}}  \, \|\nabla u\|_{\LL^2}^{\frac{3}{4}}  \quad \mbox{\rm and} \quad 
   \|u\|_{\LL^3} \leq \bar{C}_3 \; \|u\|_{\LL^2}^{\frac{1}{2}}  \, \|\nabla u\|_{\LL^2}^{\frac{1}{2}} , \quad \forall u\in {\mathbb H}^1, 
  \end{equation}
  for some positive constants $\bar{C}_3$ and $\bar{C}_4$. 
  
  Furthermore, the Gagliardo-Nirenberg inequality  implies that ${\mathbb H}^2\subset \LL^p$
  for any $p\in [2,\infty)$, and for $u\in \HH^2$
  \begin{equation} \label{GagNirH2}
  \|u\|_{\LL^p} \leq C(p) \;  \|Au\|_{\LL^2}^{\beta(p)} \, \|u\|_{\LL^2}^{1-\beta(p)}\quad \mbox{\rm for}\quad \beta(p) 
  = \frac{3}{2} \Big( \frac{1}{2}-\frac{1}{p}\Big).
  \end{equation}
  Note that for $p=6$ we have $\beta(6)=\frac{1}{2}$. Furthermore, $\|u\|_{\LL^\infty} \leq C \|u\|_{\HH^2}$ for
  $u\in \HH^2$. 
  
  Let $\alpha \in (1,+\infty)$ and  let $f,g, h:  D \to \RR$ be regular functions.
  Given  any positive constants $\varepsilon_0$ and $\varepsilon_1$
  and some constant $C_\alpha$ 
 depending on $\alpha$, the following upper estimates are straightforward consequences of the H\"older and Young inequalities
    \begin{align}
  \int_D \big| f(x) g(x) h(x) \big| dx & \leq \big\| |f| |g|^{\frac{1}{\alpha}} \big\|_{L^{2\alpha}} \, 
  \big\| |g|^{1-\frac{1}{\alpha}} \big\|_{L^{\frac{2\alpha}{\alpha -1}}}\, \|h\|_{L^2}  \label{fgh1}. \\
  & \leq \epsilon_0 \|h\|_{L^2}^2 + \frac{\varepsilon_1}{4\varepsilon_0} \big\| |f|^\alpha g\big\|_{L^2}^2 + \frac{C_\alpha}{\varepsilon_0
  \varepsilon_1^{\frac{1}{\alpha -1}}} \|g\|_{L^2}^2.  \label{fgh2}
  \end{align}

Let $\Omega_T=\Omega\times [0,T]$ be endowed with the 
product measure $d\PP \otimes ds$ on ${\mathcal F}\otimes {\mathcal B}(0,T)$. 
The following functional notations will be used throughout the paper. Set 
\begin{align}
X_0=&\; L^\infty(0,T;H) \cap L^2(0,T;V) \cap L^{2\alpha +2}([0,T]\times D; \RR^3), \label{X0}\\
{\mathcal X}_0=&\; L^4\big(\Omega ; L^\infty(0,T;H)\big) \cap L^2\big( \Omega ; L^2(0,T;V)\big) \cap L^{2\alpha +2}(\Omega_T \times D;\RR^3), 
\label{calX}\\
X_1=& \;L^\infty(0,T;V) \cap L^2(0,T;\mbox{\rm Dom} A) \cap \Big\{ u: [0,T]\times D \to \RR^3 : \nonumber \\
&\qquad \qquad \qquad \int_0^T \big[ \|u(t)\|_{\LL^{2\alpha +2}}^{2\alpha +2}
+ \big\| |u(t)|^\alpha \nabla u(t) \big\|_{\LL^2}^2 \big] dt <\infty \Big\} , \label{X1}\\
{\mathcal X}_1=&\;  L^4\big(\Omega ; L^\infty(0,T;V)\big) \cap L^2\big( \Omega ; L^2(0,T; \mbox{ \rm Dom } A)\big) \cap
 \Big\{ u: \Omega_T \times D \to \RR^3 : \nonumber \\
 & \qquad \qquad \qquad \EE \int_0^T \big[ \|u(t)\|_{\LL^{2\alpha +2}}^{2\alpha +2}
+ \big\| |u(t)|^\alpha \nabla u(t) \big\|_{\LL^2}^2 \big] dt  <\infty \Big\}. \label{calX1}
\end{align}

\section{Global well posedness and first moment estimates} \label{s-gwp}
For technical reasons, we assume that the initial condition $u_0$ belongs to $L^p(\Omega ; V)$ for some $p\in [2,\infty]$, 
   and  only consider {\it strong solutions}  in the PDE sense. We prove that the stochastic 3D Navier-Stokes equation with 
   Brinkman-Forchheimer smoothing \eqref{3D-NS} 
   has a unique solution on any time interval $[0,T]$ and prove moment estimates
of this solution. This requires some hypotheses on the driving noise $W$ and the diffusion coefficient $G$.

\subsection{The driving noise and the diffusion coefficient}	\label{def-WG}
 Let  $(e_k, k\geq 1)$ be an orthonormal basis of $H$ whose elements belong to $\HH^2:=W^{2,2}(D; \RR^3)$ and
are orthogonal in $V$.  Let ${\mathcal H}_n= \mbox{\rm span }(e_1, \cdots, e_n)$ and let
$P_n$ (resp. $\tilde{P}_n$) denote the orthogonal projection from $H$ (resp. $V$)   onto  ${\mathcal H}_n$. 
Furthermore, given $i\neq j$ we have
\[ (A e_i\, , \, e_j) = (\nabla e_i\, , \, \nabla e_j) =0\]
since the basis $\{ e_n\}_n$ is orthogonal in $V$. Hence $Au\in {\mathcal H}_n$ for every $u\in {\mathcal H}_n$. 

We deduce that for $u\in V$ we have $P_n u = \tilde{P}_n u$. Indeed,  for  $v\in {\mathcal H}_n$
and  $u\in V$: 
\begin{equation}		\label{grad_Pn}
 (P_n u, v) = (u,v), \quad \mbox{\rm and }\;  (\nabla P_n u, \nabla v) = - (P_n u, A v) =-(u, Av) = (\nabla u, \nabla v).
 \end{equation}
Hence  given $u\in V$, we have $(P_n u, v)_V=(u,v)_V$ for any $v\in {\mathcal H}_n$. 
  
  Let $K$ be a separable Hilbert space and $Q$ be a symmetric, positive trace-classe operator on $K$. 
  Let $(W(t), t\in [0,T])$ be a $K$-valued Wiener process with covariance operator $Q$, defined on the probability
  space $(\Omega, {\mathcal F},  ({\mathcal F}_t),  \PP)$. Let $\{\zeta_j\}_{j\geq 1}$ denote an orthonormal basis of $K$
   made of eigenfunctions of $Q$, with eigenvalues $\{q_j\}_{j\geq 1} $ and ${\rm Tr} Q = \sum_{j\geq 1} q_j <\infty$. Then 
   \[ W(t)=\sum_{j=1}^\infty \sqrt{q_j} \, \beta^j(t)\, \zeta_j, \qquad \forall t\in [0,T],\]
   where $\{ \beta_j\}_{j\geq 1}$ are independent one-dimensional Brownian motions defined on $(\Omega, {\mathcal F},  ({\mathcal F}_t),  \PP)$.

For details concerning this Wiener process  we refer  to \cite{DaPZab}.

 
Let ${\mathcal L} \equiv {\mathcal L}(K ; H)$ (resp. $\widetilde{\mathcal L} \equiv {\mathcal L}(K ; V)$) 
 be the space of continuous  linear operators  from $K$ to $H$ (resp. $V$) with norm $\|\,.\,\|_{\mathcal L}$ (resp.
 $\|\,.\,\|_{\widetilde{\mathcal L} }$). 

The noise intensity of the stochastic perturbation
 $G: V \to \widetilde{\mathcal L}$ which we put in \eqref{3D-NS} satisfies the following classical 
 growth and Lipschitz conditions (i) and (ii). Note that due to the 3D framework, we have to impose growth conditions
 both  on  the $\|\, \cdot \, \|_{\mathcal L}$ and  $\|\, \cdot \, \|_{\tilde{\mathcal L}}$ norms. 
 
\par
  The diffusion coefficient $G$ satisfies the following assumption:\\
 { \bf Condition (G)} Assume that 
   $G:V\to \tilde{{\mathcal L}}$  satisfies the following conditions:
  
  (i) {\bf Growth condition} There exist positive constants  $K_i$,  $\tilde{K}_i$,  $i=0,1$, 
  such that 
  \begin{align} 
\| G(u)\|_{\mathcal L}^2  & \leq K_0 + K_1 \|u\|_{H}^2, \quad \forall u\in H, \label{growthG_H}  \\ 
\| G(u)\|_{\tilde{\mathcal L}}^2 & \leq  \tilde{K}_0 + \tilde{K}_1 \|u\|_{V}^2, \quad \forall u\in V. \label{growthG_V}
  \end{align} 

 (ii) {\bf Lipschitz condition} There exists a positive constant $L$ such that 
 \begin{equation} 			\label{LipG}
  \|G(u)-G(v)\|_{\mathcal L}^2 \leq  L  \|u-v||_H^2 , \quad \forall u,v\in H. 
 \end{equation}

  We define a   weak pathwise solution (that is strong probabilistic solution in the weak deterministic sense)
  of \eqref{3D-NS} as follows: 
 \begin{defn}
 We say that equation \eqref{3D-NS} has a strong  solution if:
 \begin{itemize}
 \item  $u $ is an adapted $V$-valued process which belongs a.s. to $X_1$,
 \item $\PP$ a.s. we have  $u\in C([0,T];V)$, and 
  \begin{align*}
  \big(u(t), \phi\big) +& \nu \int_0^t \big( \nabla u(s), \nabla \phi\big) ds + \int_0^t \big\langle [u(s) \cdot \nabla]u(s), \phi\big\rangle ds \\
 &  + a \int_0^t  \int_ D |u(s,x)|^{2\alpha} u(s,x) \phi(x) dx ds  =
 \big( u_0, \phi) + \int_0^t \big( \phi ,  G(u(s)) dW(s) \big)
 \end{align*}
for every $t\in [0,T]$ and every $\phi \in V$.
  \end{itemize}
 \end{defn}
 
 \subsection{Global well-posedness and moment estimates of the solution}
 We next prove that if $\EE(\|u_0\|_V^4)<\infty$, then \eqref{3D-NS} has a unique solution $u$ in ${\mathcal X}_1$.
\begin{theorem}		\label{th_gwp}
 Let $\alpha \in [1,+\infty)$, and for $\alpha=1$ suppose that $4\nu a >1$. 
Let $u_0\in L^{2p}(\Omega;V)$,   for some $p\in [1,\infty)$, be independent of $W$, and $G$ satisfy the growth and Lipschitz conditions {\bf (G)}.
 Then equation \eqref{3D-NS}
has a unique solution in ${\mathcal X}_1$  such that a.s. $u\in C([0,T];V)$. 
Furthermore,
\begin{align}		\label{2.18}
\EE\Big( \sup_{t\in [0,T]} \|u(t)\|_V^{2p} +  \int_0^T \!\! \|A u(t)\|_{\LL^2}^2  dt   +& \int_0^T \!\!  \|u(t)\|_{\LL^{2\alpha +2}}^{2\alpha +2} 
dt \Big)
\leq C \big[1+\EE\big(\|u_0\|_V^{2p}\big) \big].
\end{align}
\end{theorem}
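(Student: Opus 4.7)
The plan is the classical Galerkin scheme: project onto $\mathcal{H}_n$ to obtain an SDE for $u^n$, derive uniform-in-$n$ moment bounds in $\mathcal{X}_1$, then pass to the limit and prove uniqueness. The compactness/identification step is (as the authors indicate) standard and essentially deferred to Appendix Sections~\ref{Ap2}--\ref{Ap3}, so the heart of the work is the a priori bound \eqref{2.18}, which I would establish in two tiers.

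First, the $H$-level bound. Applying the It\^o formula to $\|u^n(t)\|_H^{2p}$, the convective contribution vanishes by the antisymmetry \eqref{B}, while the Brinkman--Forchheimer term produces the coercive quantity $a\|u^n\|_{\LL^{2\alpha+2}}^{2\alpha+2}$. Combining the growth bound \eqref{growthG_H}, the Burkholder--Davis--Gundy inequality, and the Gronwall lemma yields a uniform-in-$n$ bound on $\EE\big[\sup_{t\leq T}\|u^n\|_H^{2p}\big] + \EE\int_0^T\big[\|\nabla u^n\|^2 + \|u^n\|_{\LL^{2\alpha+2}}^{2\alpha+2}\big]dt$, essentially mirroring the anisotropic 2D argument of \cite{BeMi_anisotropic}.

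The core step is the $V$-level estimate. Applying It\^o to $\|u^n\|_V^{2p}$ and testing the projected equation against $u^n + A u^n$ generates, up to the weight $p\|u^n\|_V^{2p-2}$: the dissipation $\nu\|A u^n\|^2$; the BF contribution which after integration by parts (Section~\ref{Ap1}) dominates $a\,\big\||u^n|^\alpha \nabla u^n\big\|^2$; the delicate convective term $\langle B(u^n,u^n), A u^n\rangle$; an It\^o correction handled by \eqref{growthG_V}; and a martingale controlled by BDG. The crucial nonlinear estimate is
\[ |\langle B(u^n,u^n), A u^n\rangle| \leq \int_D |u^n|\,|\nabla u^n|\,|A u^n|\,dx. \]
For $\a>1$, I would invoke \eqref{fgh2} with $f=u^n$, $g=\nabla u^n$, $h=A u^n$, choosing $\e_0=\nu/4$ and $\e_1$ small enough that $\e_1/(4\e_0) \leq a/2$; this absorbs the convection into one half of the dissipative and BF contributions, leaving a residue $C_{\a,\nu,a}\|\nabla u^n\|^2$ swallowed by Gronwall. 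For $\a=1$, \eqref{fgh2} degenerates and I would instead split $|u^n\,\nabla u^n\, A u^n| \leq \tfrac{\nu}{2}|A u^n|^2 + \tfrac{1}{2\nu}|u^n|^2|\nabla u^n|^2$ pointwise: the BF contribution eats the second summand precisely when $4\nu a > 1$, after leaving a cushion for the It\^o correction, which explains the hypothesis in this borderline case.

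Taking expectations, applying BDG with Young's inequality to peel off the supremum, and closing by Gronwall gives \eqref{2.18} uniformly in $n$. Passing to the limit via weak-$\ast$ compactness in $L^{2p}(\Om;L^\infty(0,T;V))$, weak compactness in $L^2(\Om;L^2(0,T;\text{Dom}\,A))$ and $L^{2\alpha+2}(\Om_T\times D)$, plus a Skorohod/tightness argument (or monotonicity of $-\nu\D + a|\cdot|^{2\a}(\cdot)$) to identify the nonlinearities, produces a solution in $\mathcal{X}_1$, with \eqref{2.18} preserved by lower semicontinuity. Uniqueness follows by applying It\^o to $\|u-v\|_H^2$ for two solutions: the BF difference is monotone, the cross term $\langle B(u,u)-B(v,v),u-v\rangle = \langle B(u-v,v),u-v\rangle$ is controlled via \eqref{GagNir} and the $\mathcal{X}_1$-regularity of $v$, and Gronwall closes. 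The main obstacle is the nonlinear estimate on $\langle B(u,u),A u\rangle$: in 3D this would be critical without the BF correction, but the BF term supplies exactly the missing coercive quantity $\||u|^\a \nabla u\|^2$ which, together with the dissipation, tames it with no smallness on the noise or the viscosity.
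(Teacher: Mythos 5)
Your proposal follows essentially the same route as the paper: a Galerkin scheme with projected noise, an $H$-level It\^o estimate where the convection cancels by antisymmetry and the Brinkman--Forchheimer term supplies $\|u^n\|_{\LL^{2\alpha+2}}^{2\alpha+2}$, and then the $V$-level estimate in which the convective term $\langle B(u^n,u^n),Au^n\rangle$ is absorbed via \eqref{fgh2} into a fraction of $\nu\|Au^n\|_{\LL^2}^2$ and of the coercive quantity $\||u^n|^\alpha\nabla u^n\|_{\LL^2}^2$ produced by the BF term (this is exactly the content of Lemmas \ref{upper_nablaB} and \ref{lem1.4}, which the paper reaches after an integration by parts that you bypass by bounding $\int_D|u^n||\nabla u^n||Au^n|\,dx$ directly --- harmless on the torus where $\|Au\|_{\LL^2}$ controls the full Hessian), with uniqueness by the monotonicity \eqref{1.10} plus Gagliardo--Nirenberg, matching Lemma \ref{lem1.5}. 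One small quantitative slip in the borderline case $\alpha=1$: your pointwise split $|u^n||\nabla u^n||Au^n|\leq \tfrac{\nu}{2}|Au^n|^2+\tfrac{1}{2\nu}|u^n|^2|\nabla u^n|^2$ requires $a>\tfrac{1}{2\nu}$, i.e.\ $2\nu a>1$, which is stronger than the stated hypothesis $4\nu a>1$; the paper instead uses $ab\leq \epsilon a^2+\tfrac{1}{4\epsilon}b^2$ with $\epsilon=\nu-\eta$ arbitrarily close to $\nu$ (see \eqref{1.18Bis} and Lemma \ref{lem1.4}(ii), with $\tilde a=a-\tfrac{1}{4(\nu-\eta)}$), which is how the threshold $4\nu a>1$ is actually attained --- a one-line fix to your constants.
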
 
The proof, which is quite classical, is sketched in Section \ref{Ap3} of the Appendix.

\section{ Moment estimates of time increments of the solution} \label{s-increments}
In this section we prove moment estimates for various norms of time increments of the solution to \eqref{3D-NS}. 
This will be crucial to deduce the speed of convergence of numerical schemes. Let 
$u_0\in L^{2p}(\Omega;V)$ for some $p\in [2,\infty)$ and $u$ be the solution to \eqref{3D-NS}, that is 
\begin{align} 			\label{3.1}
u(t) = & \, S(t) u_0 - \int_0^t \!\! S(t-s) B(u(s),u(s)) ds -a \int_0^t \!\! S(t-s) \Pi |u(s)|^{2\alpha} u(s) ds \nonumber \\
&\quad  + \int_0^t \!\!S(t-s) G(u(s)) dW(s), \quad \forall t\in [0,T], \quad \PP\;  \mbox{\rm a.s.}
\end{align} 
where $S(t)=e^{-\nu t A}$ is the analytic semi group  generated by the Stokes operator $A$ multiplied by the viscosity $\nu$. 
Then (see e.g. \cite{CarPro}, Lemma 2.2   and \cite{Pri}, Lemma 2.1), for $b>0$ and $t\in [0,T]$,
\begin{align}
\big\| A^b e^{-\nu tA}\big\|_{{\mathcal L}(\LL^2;\LL^2)} &\leq  C(b,\nu)\,  t^{-b}, 	\label{3.2}\\
\big\|A^{ -b} \big( \mbox{\rm Id } - e^{-\nu t A} \big) \big\|_{{\mathcal L}(\LL^2;\LL^2)}& \leq \widetilde{C(b,\nu)}\,  t^b, 	\label{3.3}
\end{align}
for some positive constants  $C(b,\nu)$ and $\widetilde{C(b,\nu)}$. 

The following regularity result for the bilinear term 
 will be crucial in the proof of time regularity.
\begin{lemma}		\label{Lem3.1}
(i)  There exists a positive constant $M$ such that 
\begin{equation}		\label{3.5}
\| A^{-\frac{1}{4}} B(u,u)\|_{\LL^2} \leq M \| A^{\frac{1}{2}} u\|_{\LL^2}^2 \leq  M \|u\|_V^2,\quad \forall  u\in V.
\end{equation}

(ii) For  $\delta \in (0, \frac{3}{4})$, 
\begin{equation} 		\label{3.5Bis}
\| A^{-\delta} B(u,u)\|_{\LL^2} \leq C  
\|A u\|_{\LL^2}^{\frac{3}{4}-\delta} \, \| u\|_{\HH^1}^{\frac{5}{4}+\delta},\quad \forall u\in  \mbox{\rm Dom}(A). 
\end{equation}
\end{lemma}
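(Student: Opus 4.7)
The plan is to reduce both bounds to a duality plus H\"older plus interpolation argument. Since $A^{-s}$ is self-adjoint on $H$ for $s\geq 0$, one writes the $\LL^2$ norm as a supremum over test functions,
\begin{equation*}
\|A^{-s}B(u,u)\|_{\LL^2} \;=\; \sup_{\phi\in H,\ \|\phi\|_{\LL^2}\le 1} \bigl|b(u,u,A^{-s}\phi)\bigr|.
\end{equation*}
The key structural fact is that for the periodic Stokes operator, $D(A^{s})\subset \HH^{2s}\cap H$, so the 3D Sobolev embedding $\HH^{2s}\hookrightarrow \LL^{p}$ with $\tfrac1p=\tfrac12-\tfrac{2s}{3}$ (valid whenever $2s<3/2$) gives $\|A^{-s}\phi\|_{\LL^p}\le C\|\phi\|_{\LL^2}$. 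It then suffices to choose H\"older exponents that balance the two factors of $u$ in $b(u,u,\cdot)$ and produce the prescribed right-hand side.

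For \eqref{3.5} I would take $s=\tfrac14$. Then $D(A^{1/4})\subset \HH^{1/2}\hookrightarrow \LL^{3}$, so $\|v\|_{\LL^{3}}\le C\|A^{1/4}v\|_{\LL^{2}}$ for $v=A^{-1/4}\phi$; applying H\"older with exponents $(6,2,3)$ together with $\HH^{1}\hookrightarrow \LL^{6}$ yields
\begin{equation*}
|b(u,u,v)| \;\le\; \|u\|_{\LL^{6}}\,\|\nabla u\|_{\LL^{2}}\,\|v\|_{\LL^{3}} \;\le\; C\,\|\nabla u\|_{\LL^{2}}^{2}\,\|A^{1/4}v\|_{\LL^{2}}.
\end{equation*}
Taking the supremum over $\phi$ with $\|\phi\|_{\LL^{2}}\le 1$ and using $\|\nabla u\|_{\LL^{2}}=\|A^{1/2}u\|_{\LL^{2}}\le \|u\|_{V}$ delivers \eqref{3.5}.

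For \eqref{3.5Bis}, given $\delta\in(0,3/4)$, I set $1/p_{3}:=\tfrac12-\tfrac{2\delta}{3}$ so that $\HH^{2\delta}\hookrightarrow \LL^{p_{3}}$, and choose the \emph{symmetric} H\"older exponents $1/p_{1}=1/p_{2}:=\tfrac14+\tfrac{\delta}{3}$, which together with $1/p_{3}$ sum to $1$. The advantage of this symmetric choice is that Gagliardo-Nirenberg (applied componentwise, together with $\|\nabla(\nabla u)\|_{\LL^{2}}=\|Au\|_{\LL^{2}}$ for divergence-free periodic fields) produces the \emph{same} interpolation parameter $\theta=3/4-\delta$ on both factors:
\begin{align*}
\|u\|_{\LL^{p_{1}}} &\;\le\; C\,\|\nabla u\|_{\LL^{2}}^{\theta}\,\|u\|_{\LL^{2}}^{1-\theta},\\
\|\nabla u\|_{\LL^{p_{2}}} &\;\le\; C\,\|Au\|_{\LL^{2}}^{\theta}\,\|\nabla u\|_{\LL^{2}}^{1-\theta}.
\end{align*}
Inserting these into $|b(u,u,v)|\le \|u\|_{\LL^{p_{1}}}\|\nabla u\|_{\LL^{p_{2}}}\|v\|_{\LL^{p_{3}}}$ with $v=A^{-\delta}\phi$, the powers of $\|\nabla u\|_{\LL^{2}}$ collapse to $\theta+(1-\theta)=1$, giving
\begin{equation*}
|b(u,u,v)| \;\le\; C\,\|Au\|_{\LL^{2}}^{3/4-\delta}\,\|\nabla u\|_{\LL^{2}}\,\|u\|_{\LL^{2}}^{1/4+\delta}\,\|A^{\delta}v\|_{\LL^{2}},
\end{equation*}
and \eqref{3.5Bis} follows from $\|\nabla u\|_{\LL^{2}}\|u\|_{\LL^{2}}^{1/4+\delta}\le \|u\|_{\HH^{1}}^{5/4+\delta}$ after taking the supremum over $\phi$.

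The only delicate step is the bookkeeping of exponents in part (ii): one must verify simultaneously that $(p_{1},p_{2},p_{3})$ is H\"older-admissible, that $p_{3}$ is compatible with the Sobolev embedding of $\HH^{2\delta}$ (which is exactly where the restriction $\delta<3/4$ enters, in order to keep $p_{3}<\infty$), and that both Gagliardo-Nirenberg exponents come out equal to $3/4-\delta$ so that the residual power of $\|\nabla u\|_{\LL^{2}}$ is exactly $1$; any asymmetric choice of $p_{1},p_{2}$ fails to recombine into the single factor $\|u\|_{\HH^{1}}^{5/4+\delta}$.
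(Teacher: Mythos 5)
Your proposal is correct, but part (i) takes a genuinely different route from the paper. The paper deduces \eqref{3.5} from the abstract bilinear estimate of Giga and Miyakawa \cite{GigMiy}, namely $\| A^{-\delta} B(u,v)\|_{\LL^2} \leq M \| A^\theta u\|_{\LL^2} \|A^\rho v \|_{\LL^2}$ valid for $\rho+\delta>\frac12$ and $\delta+\theta+\rho\geq \frac54$, specialized to $(\delta,\theta,\rho)=(\frac14,\frac12,\frac12)$; you instead give a self-contained duality argument ($\mbox{\rm Dom}(A^{1/4})\subset \HH^{1/2}\hookrightarrow \LL^3$ plus H\"older with exponents $(6,2,3)$), which is more elementary and unifies (i) with (ii), whereas the citation buys a whole family of such estimates at once. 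One small caveat in your (i): on the torus $\|u\|_{\LL^6}\leq C\|\nabla u\|_{\LL^2}$ requires zero mean (Poincar\'e); for general $u\in V$ you should use $\|u\|_{\LL^6}\leq C\|u\|_V$, which still yields the bound $M\|u\|_V^2$ that is all the paper uses downstream (the intermediate inequality in \eqref{3.5} holds on the subspace where $A^{-1/4}$ is defined, consistently with the paper's implicit conventions). For part (ii) your argument is essentially the paper's: duality, the Sobolev embedding $\HH^{2\delta}\subset \LL^q$ with $q=\frac{6}{3-4\delta}$ (which is exactly where $\delta<\frac34$ enters in both proofs), H\"older, then Gagliardo--Nirenberg --- except that the paper uses the asymmetric split $\frac{1}{\bar p}+\frac12+\frac1q=1$ with $\bar p=\frac{3}{2\delta}$, keeping $\nabla u$ in $\LL^2$ and applying \eqref{GagNirH2} a single time to $\|u\|_{\LL^{\bar p}}$, landing directly on the same intermediate bound $\|Au\|_{\LL^2}^{\frac34-\delta}\,\|\nabla u\|_{\LL^2}\,\|u\|_{\LL^2}^{\frac14+\delta}$ that your symmetric choice produces after two Gagliardo--Nirenberg applications. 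Consequently your closing remark that any asymmetric choice of $p_1,p_2$ fails to recombine is incorrect: the paper's choice is maximally asymmetric and recombines perfectly, because $\|\nabla u\|_{\LL^2}\leq \|u\|_{\HH^1}$ supplies the residual unit power; the only real constraint is that the total Gagliardo--Nirenberg weight carried by $\|Au\|_{\LL^2}$ equals $\frac34-\delta$.
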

\begin{proof}
(i) Using \cite[Lemma 2.2]{GigMiy} we deduce that given  positive constants $\delta, \theta, \rho$ such that 
$0\leq \delta < \frac{1}{2}+\frac{3}{4}$, $\theta >0$, $\rho >0$ such that
$\rho + \delta > \frac{1}{2}$ and $\delta + \theta + \rho \geq \frac{5}{4}$, there exists a constant $M:=M(\delta,\theta,\rho)$ such that
for $u,v$ regular enough
\[ 
\| A^{-\delta} B(u,v)\|_{\LL^2} \leq M \| A^\theta u\|_{\LL^2} \|A^\rho v \|_{\LL^2}.
\] 
Choosing $\delta = \frac{1}{4}$, $\theta = \rho = \frac{1}{2}$, we deduce \eqref{3.5}.

(ii) For $u\in \HH^2$, we have
\[ \|A^{-\delta} B(u,u)\|_{\LL^2} = \sup\Big\{ \int_{D} |\nabla u|\, |u|\, |\phi| \, dx ; : \; \|\phi\|_{\HH^{2\delta}}\leq 1\Big\}.\]
 In dimension 3, the Sobolev embedding theorem (see e.g. \cite{Adams}, Theorem 7.57 page  217) implies
 $\mathbb{W}^{\beta,p}(D) \subset \LL^q(D)$ if
$3>\beta p$, $\beta >0$, $1<p<3$ and $p\leq q\leq \frac{3p}{3-\beta p}$. Hence for $\delta \in (0, \frac{3}{4})$, choosing $\beta =2\delta $, 
$p=2$ and $q=\frac{6}{3-4\delta}$, we obtain
${\mathbb W}^{2\delta ,2}(D) = \HH^{2\delta}(D)\subset \LL^q(D)$. Let $\bar{p}=\frac{3}{2\delta}$; then 
 $\frac{1}{\bar{p}} + \frac{1}{2}+\frac{1}{q} =1$, 
  and the H\"older inequality yields 
  \[ \|A^{-\delta} B(u,u)\|_{\LL^2} \leq C \| \nabla u\|_{\LL^2} \|u\|_{\LL^{\bar{p}}}.
  \]
Since the Gagliardo Nirenberg inequality   \eqref{GagNirH2}  implies 
$\|u\|_{\LL^{\bar{p}}} \leq C \| Au\|_{\LL^2}^{\frac{3}{4}-\delta} \|u\|_{\LL^2}^{\frac{1}{4}+\delta}$, 
 this concludes the proof of \eqref{3.5Bis}. 
\end{proof}

The following result proves regularity of the Brinkman-Forchheimer term. To have a regularity similar to that of the bilinear term, we have
to impose some restriction on the exponent $\alpha$.
\begin{lemma}			\label{Lem3.2}
Let   $\alpha \in [1,\frac{3}{2}]$. \\
\indent (i)  there exists a positive constant $C$ such that 
\begin{equation} 			\label{3.8}
\big\| A^{-\frac{1}{4}} \big( |u|^{2\alpha} u\big) \big\|_{\LL^2} \leq C \|u\|_V^{2\alpha +1}, \quad \forall u\in V. 
\end{equation}
\indent  (ii) Furthermore, for any  $\delta \in (0, \frac{3}{4})$ there exists $C>0$ such that
\begin{equation}		\label{3.8Bis}
\big\| A^{-\delta} \big( |u|^{2\alpha} u\big) \big\|_{\LL^2} \leq C  \| Au\|_{\LL^2}^{\frac{3}{4}-\delta} 
\|u\|_V^{2\alpha+\frac{1}{4}+\delta} 
 \quad \forall u\in {\rm Dom}(A).
\end{equation} 
\end{lemma}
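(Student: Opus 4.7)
The plan is to adapt the duality-plus-Sobolev strategy from the proof of Lemma~\ref{Lem3.1}(ii) to the algebraic nonlinearity $|u|^{2\alpha}u$. First I would rewrite the negative-order norm as a supremum over a dual test function, then apply H\"older on the product $|u|^{2\alpha}u\cdot\phi$, and finally feed each factor into the appropriate Sobolev or Gagliardo-Nirenberg embedding in dimension three. The restriction $\alpha\in[1,\tfrac{3}{2}]$ should enter only when checking that the integrability indices produced by H\"older land in the admissible Sobolev range.

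For (i) I would use the identity
\[
\bigl\|A^{-1/4}(|u|^{2\alpha}u)\bigr\|_{\LL^2}=\sup_{\|A^{1/4}\phi\|_{\LL^2}\le 1}\int_{D}|u|^{2\alpha}u\cdot\phi\,dx,
\]
and recall that $\|A^{1/4}\phi\|_{\LL^2}$ is equivalent to $\|\phi\|_{\HH^{1/2}}$ on divergence-free periodic vector fields. Since $\HH^{1/2}(D)\subset\LL^{3}(D)$ in dimension three, H\"older with exponents $(3/2,3)$ reduces the integral to $\|u\|_{\LL^{3(2\alpha+1)/2}}^{2\alpha+1}\,\|\phi\|_{\HH^{1/2}}$. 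Because $\alpha\le 3/2$ forces $3(2\alpha+1)/2\le 6$, the Sobolev embedding $V\subset\LL^{6}$ bounds the remaining $\LL^{3(2\alpha+1)/2}$-norm by a constant times $\|u\|_V$, which yields \eqref{3.8}.

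For (ii) I would follow the same template at general $\delta\in(0,3/4)$, now using the embedding $\HH^{2\delta}\subset\LL^{6/(3-4\delta)}$ already derived in the proof of Lemma~\ref{Lem3.1}(ii), so that the problem reduces to estimating $\int_{D}|u|^{2\alpha+1}|\phi|\,dx$ against $\|\phi\|_{\LL^{6/(3-4\delta)}}\le 1$. The new ingredient is a three-factor H\"older splitting $|u|^{2\alpha+1}=|u|^{a}\cdot|u|^{b}$ with $a+b=2\alpha+1$. One factor should be controlled via $V\subset\LL^{6}$ by taking $bp_2=6$, and the other via the $\HH^{2}$ Gagliardo-Nirenberg bound \eqref{GagNirH2} by taking $ap_1=r$ with $r$ chosen so that $a\,\beta(r)=3/4-\delta$. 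The H\"older balance $1/p_1+1/p_2+(3-4\delta)/6=1$ then pins down $a=(5-2\alpha)/2$ and $b=3(2\alpha-1)/2$, so that the resulting exponent of $\|u\|_V$ is $(a+b)-(3/4-\delta)=2\alpha+1/4+\delta$ after absorbing $\|u\|_{\LL^2}\le C\|u\|_V$, matching \eqref{3.8Bis}.

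I expect the main obstacle to be the exponent bookkeeping in (ii): one must confirm that the unique choice of $a,b$ above is Sobolev-admissible, namely $b\le 3$ (so that $bp_2\le 6$ and the embedding $V\subset\LL^{bp_2}$ applies to the second factor) and $r\ge 2$ (so that the interpolation \eqref{GagNirH2} applies to the first). A direct computation shows these reduce respectively to $\alpha\le 3/2$ and $\delta\le 3/4$, so the hypotheses of the lemma are exactly what the H\"older chain can accommodate, with no delicate cancellations or localization needed elsewhere.
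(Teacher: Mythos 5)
Your proposal is correct and takes essentially the same route as the paper's proof: both parts are handled by duality against test functions $\phi$ with $\|\phi\|_{\HH^{2\delta}}\le 1$, the Sobolev embeddings $\HH^{1/2}\subset \LL^3$ and $\HH^{2\delta}\subset \LL^{6/(3-4\delta)}$, H\"older, and the Gagliardo--Nirenberg bound \eqref{GagNirH2}, with $\alpha\le\frac32$ entering exactly through the admissibility of the integrability exponents. The only difference is bookkeeping: in (i) the paper uses the three-factor split $\|u\|_{\LL^{4\alpha}}^{2\alpha}\|u\|_{\LL^6}\|\phi\|_{\LL^3}$ instead of your single $\LL^{3(2\alpha+1)/2}$-norm, and in (ii) it takes $a=1$, $b=2\alpha$ (i.e.\ $\|u\|_{\LL^{4\alpha}}^{2\alpha}\|u\|_{\LL^{3/(2\delta)}}$, applying \eqref{GagNirH2} with $\beta(3/(2\delta))=\frac34-\delta$) rather than your normalization $bp_2=6$; both choices satisfy the H\"older balance and yield the identical final estimate.
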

\begin{proof}
We use once more the Sobolev embedding theorem  $\mathbb{W}^{\beta,p}(D) \subset \LL^r(D)$ if
$3>\beta p$, $\beta >0$, $1<p<3$ and $p\leq r\leq \frac{3p}{3-\beta p}$. 

(i) Choosing $\beta =\frac{1}{2}$, $p=2$ and $r=3$, we obtain
${\mathbb W}^{\frac{1}{2},2}(D) = H^{\frac{1}{2}}(D)\subset \LL^3(D)$, 
while $\beta=1$, $p=2$ and $r\in [2,6]$ yields $H^1(D)\subset \LL^r(D)$.
Given $u\in \HH^1$, we have  
\[ \big\|A^{-\frac{1}{4}} \big( |u|^{2\alpha} u\big) \big\|_{\LL^2} 
=  \sup\Big\{ \int_{D}  |u(x)|^{2\alpha} u(x)  \phi(x) dx  :  \| \phi\|_{\HH^{\frac{1}{2}}} \leq 1\Big\}.
\]
Using H\"older's inequality with exponents 2,6  and 3, we obtain for $\delta \in [\frac{1}{4}, \frac{3}{4})$
\[ \big\|A^{-\frac{1}{4}} \big( |u|^{2\alpha} u\big) \big\|_{\LL^2}  \leq \sup\{ \|u\|_{\LL^{4\alpha}}^{2\alpha} \|u\|_{\LL^6} \|\phi\|_{\LL^3} \, : \, 
\| \phi\|_{\HH^{\frac{1}{2}}} \leq 1\} \leq C \|u\|_V^{2\alpha +1},
\]
where the last upper estimate is a consequence of the inequality $4\alpha \in [4,6]$. This completes the proof of \eqref{3.8}.

(ii) As in the proof of Lemma \ref{3.1}  (ii)  we choose $q=\frac{6}{3-4\delta}$ to ensure $\HH^{2\delta}\subset \LL^q$ and $p=\frac{3}{2\delta}$.
The H\"older and Gagliardo Nirenberg inequalities imply 
\begin{align*}
 \|A^{-\delta} (|u|^{2\alpha} u)\|_{\LL^2} = &\sup\Big\{ \int |u|^{2\alpha} |u| \phi : \|\phi\|_{\HH^{2\delta}} \leq 1\Big\} \leq  
\|u\|_{\LL^{4\alpha}}^{2\alpha} \|u\|_{\LL^p}\|\phi\|_{\LL^q} \\
\leq &C \|u\|_{\LL^{4\alpha}}^{2\alpha} \|Au\|_{\LL^2}^{\frac{3}{4}-\delta} \|u\|_{\LL^2}^{\frac{1}{4}+\delta}. 
\end{align*}
Since $\alpha \in [1,\frac{3}{2}]$, the Sobolev embedding $\HH^1\subset  \LL^\gamma$ for $ \gamma\in [4,6]$ 
concludes the proof. 
\end{proof}

The following proposition gives upper estimates for moments of time increments of the solution to the stochastic 3D modified 
Navier Stokes equation $u$ defined in  equation \eqref{3.1}.

\begin{prop}			\label{time-increments}
Let $u_0$ be ${\mathcal F}_0$-measurable and  let  $\alpha \in [1,\frac{3}{2}]$ with $4\nu a >1$ if $\alpha =1$.   
Suppose that the diffusion coefficient $G$ satisfies Condition  {\bf   (G)} and let $u$ be the solution to \eqref{3D-NS}. 
Then for $\lambda \in (0,\frac{1}{2})$ we have

(i) Suppose  $u_0\in L^{(2\alpha+1) p}(\Omega ; V)$ for  some $p\in[2,\infty)$. 
There exists a positive constant  $C:= C(T,a,p, \mbox{\rm Tr}Q)$ such that for $0\leq t_1<t_2\leq T$,
\begin{equation}		\label{3.9}
\EE\big( \| u(t_2)-u(t_1) \|_H^{p}\big) \leq C 
 \, |t_2-t_1|^{\lambda p} \big[ 1+\EE\big(\|u_0\|_V^{(2\alpha +1)p}\big) \big].
\end{equation}

(ii) Let $N\geq 1$ be an integer  and for $k=0, \cdots, N$ set $t_k=\frac{kT}{N}$. Then there exists 
$C:=C(T,  a,  \mbox{\rm Tr}Q, \lambda)>0$ (independent of $N$) such that for  $p(\lambda)=\frac{2+8\alpha-2\lambda}{1-\lambda}$ and 
$u_0\in \LL^{p(\lambda)}(\Omega ; V) $ 
\begin{align}			\label{3.10}
\EE\Big(   \sum_{j=1}^N \int_{t_{j-1}}^{t_j}\!\! & \big[ \| \nabla (u(s) - u(t_j))\|_{\LL^2}^{2} +  \| \nabla (u(s) - u(t_{j-1}))\|_{\LL^2}^{2} \big] ds \Big)
\nonumber \\
 &\leq C \Big( \frac{T}{N}\Big)^{2\lambda}  \, 
\Big[ 1+\EE\Big(\|u_0\|_V^{p(\lambda)}\Big) \Big].
\end{align} 
\end{prop}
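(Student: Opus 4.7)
For part (i), the plan is to work directly from the mild formulation \eqref{3.1}, writing $u(t_2)-u(t_1) = [S(t_2)-S(t_1)]u_0 + J_B + J_F + J_W$, where $J_B$, $J_F$, $J_W$ collect the contributions of the bilinear drift, the Brinkman-Forchheimer drift, and the stochastic integral. For each deterministic integral I would split $\int_0^{t_2} = \int_0^{t_1}+\int_{t_1}^{t_2}$ and use the semigroup identity $S(t_2-s)-S(t_1-s) = [S(t_2-t_1)-\mathrm{Id}]S(t_1-s)$. Combining the smoothing bound \eqref{3.2}, the negative-power bound \eqref{3.3}, and Lemma \ref{Lem3.1}(i) gives
\[
\|[S(t_2-s)-S(t_1-s)]B(u(s),u(s))\|_{\LL^2} \leq C(t_2-t_1)^\lambda (t_1-s)^{-\lambda-\frac14}\|u(s)\|_V^2,
\]
whose singularity is integrable on $[0,t_1]$ since $\lambda<1/2$; the complementary piece on $[t_1,t_2]$ yields a clean $(t_2-t_1)^{3/4}$ bound. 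The Brinkman-Forchheimer contribution is handled identically through Lemma \ref{Lem3.2}(i), and the factor $\|u\|_V^{2\alpha+1}$ dictates the moment $u_0\in L^{(2\alpha+1)p}(\Omega;V)$. For $J_W$ I would use the Burkholder-Davis-Gundy inequality with the same splittings together with the Hilbert-Schmidt bound $\|A^\lambda S(t_1-s)G(u)\|^2_{\tilde{\mathcal L}} \leq C(t_1-s)^{-2\lambda}(\mathrm{Tr}\,Q)\|G(u)\|_{\mathcal L}^2$ to produce a $(t_2-t_1)^{\lambda p}$ factor. Invoking the moment estimate \eqref{2.18} of Theorem \ref{th_gwp} then closes part (i).

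Part (ii) is more delicate since it controls a first spatial derivative of the increment integrated in time. I would treat the two terms of the sum symmetrically: for $\|\nabla(u(s)-u(t_{j-1}))\|^2$, use the mild formulation of $u(s)$ based at $t_{j-1}$; for $\|\nabla(u(s)-u(t_j))\|^2$, express $u(t_j)$ by a forward mild formulation starting at $s$. Either way, the decomposition has a linear semigroup piece, two drift pieces, and a stochastic piece, each to be controlled in the norm $\int_{t_{j-1}}^{t_j}\|A^{1/2}\cdot\|_{\LL^2}^2\, ds$. For the linear piece, spectral calculus on $A$ together with the elementary inequality $(1-e^{-y})^2 \leq y^{2\lambda}$ valid for $y\geq 0$ and $\lambda \in [0,1]$ yields
\[
\int_{t_{j-1}}^{t_j}\|A^{1/2}[S(s-t_{j-1})-\mathrm{Id}]v\|_{\LL^2}^2\, ds \leq C\tau^{1+2\lambda}\|A^{1/2+\lambda}v\|_{\LL^2}^2,
\]
with $\tau=T/N$. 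Substituting $v=u(t_{j-1})$, interpolating $\|A^{1/2+\lambda}v\|^2 \leq \|v\|_V^{2-4\lambda}\|Av\|^{4\lambda}$, summing over $j$ and applying H\"older in time and $\Omega$ reduces this piece to a combination of $\EE\sup_t\|u\|_V^m$ and $\EE\int_0^T\|Au\|^2\,ds$, both controlled by Theorem \ref{th_gwp}.

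For the drift contributions the key tools are Lemmas \ref{Lem3.1}(ii) and \ref{Lem3.2}(ii) with the calibrated choice $\delta=\frac12-\lambda \in (0,\frac12)$, yielding
\[
\|A^{1/2}S(s-\sigma) B(u(\sigma),u(\sigma))\|_{\LL^2} \leq C(s-\sigma)^{-(1-\lambda)}\|Au(\sigma)\|^{\frac14+\lambda}\|u(\sigma)\|_V^{\frac74-\lambda},
\]
and the analogous bound with $\|u\|_V^{2\alpha+\frac34-\lambda}$ for the Brinkman-Forchheimer term. A Cauchy-Schwarz split of the integrable singularity $(s-\sigma)^{-(1-\lambda)}$, followed by squaring in $s$ and summing over $j$, produces a factor $\tau^{2\lambda}$ multiplying a space-time integral of $\|Au\|^{\frac12+2\lambda}\|u\|_V^{2\alpha_\star}$, which H\"older's inequality against the bounds of Theorem \ref{th_gwp} dominates by $\tau^{2\lambda}(1+\EE\|u_0\|_V^{p(\lambda)})$. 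The stochastic contribution is handled analogously via the It\^o isometry together with the same fractional-power decomposition. The main obstacle is the simultaneous calibration of $\delta$ in the regularity lemmas so as to keep the semigroup singularity $(s-\sigma)^{-(1/2+\delta)}$ integrable, leave the pointwise nonlinear factors in function spaces controlled by Theorem \ref{th_gwp}, and extract exactly the factor $\tau^{2\lambda}$; tracking the worst (Brinkman-Forchheimer) moment exponent through the H\"older arguments is what produces the precise rate $p(\lambda)=(2+8\alpha-2\lambda)/(1-\lambda)$.
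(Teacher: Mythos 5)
Your plan for part (i) is essentially the paper's own proof: the same four-term decomposition of $u(t_2)-u(t_1)$, the same identity $S(t_2-s)-S(t_1-s)=S(t_1-s)[S(t_2-t_1)-\mathrm{Id}]$, and the same interplay of \eqref{3.2}, \eqref{3.3} with Lemmas \ref{Lem3.1}(i) and \ref{Lem3.2}(i) (which is what forces the moment $u_0\in L^{(2\alpha+1)p}(\Omega;V)$), the stochastic term being handled by Burkholder--Davis--Gundy exactly as in \eqref{3.19}--\eqref{3.20}; this half is correct. In part (ii), your treatment of the half $\|\nabla(u(s)-u(t_j))\|_{\LL^2}^2$ is also sound and is in fact a tidy compression of the paper's bookkeeping: keeping $[S(t_j-s)-\mathrm{Id}]u(s)$ intact and paying $\tau^{2\lambda}\|A^{1/2+\lambda}u(s)\|_{\LL^2}^2$ \emph{integrated in $s$} is legitimate, because after interpolation this is an a.e.-in-time quantity dominated, via H\"older in $t$ and $\omega$, by $\EE\sup_t\|u(t)\|_V^{m}$ and $\EE\int_0^T\|Au(r)\|_{\LL^2}^2dr$ from \eqref{2.18}; your calibration $\delta=\tfrac12-\lambda$ in Lemmas \ref{Lem3.1}(ii), \ref{Lem3.2}(ii) for the drift pieces also works and even yields a moment exponent slightly below $p(\lambda)$.

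The genuine gap is in the other half, $\|\nabla(u(s)-u(t_{j-1}))\|_{\LL^2}^2$, where you restart the mild formulation at $t_{j-1}$. There the linear piece is $A^{1/2}[S(s-t_{j-1})-\mathrm{Id}]u(t_{j-1})$, and your spectral bound $\int_{t_{j-1}}^{t_j}\|A^{1/2}[S(s-t_{j-1})-\mathrm{Id}]v\|_{\LL^2}^2ds\leq C\tau^{1+2\lambda}\|A^{1/2+\lambda}v\|_{\LL^2}^2$, while correct as stated, is substituted with $v=u(t_{j-1})$ \emph{at a fixed deterministic grid time}. The solution is only known to satisfy $u\in C([0,T];V)$ a.s.\ with $Au\in L^2(\Omega\times(0,T);\LL^2)$, so $u(t_{j-1})$ need not belong to $\mathrm{Dom}(A^{1/2+\lambda})\subsetneq V$, and the interpolated quantity $\|u(t_{j-1})\|_V^{2-4\lambda}\|Au(t_{j-1})\|_{\LL^2}^{4\lambda}$ may be infinite; at $j=1$ it is $\|Au_0\|_{\LL^2}$ itself, which is not assumed finite. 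Nor can ``H\"older in time'' rescue this: $L^2(0,T)$ control of $\|Au(\cdot)\|_{\LL^2}$ gives no information about its values at the $N$ grid points, so the sum $\sum_j\tau\,\|Au(t_{j-1})\|_{\LL^2}^{4\lambda}$ is not dominated by $\int_0^T\|Au(r)\|_{\LL^2}^2dr$. This is precisely the obstruction the paper avoids by never restarting: it keeps the mild formula from time $0$, so the only pointwise-in-time evaluation is $u_0\in V$, and the linear piece is written as $A^\delta S(s)\,A^{-\delta}[S(t_j-s)-\mathrm{Id}]\,A^{1/2}u_0$ with the singular but time-integrable weight $s^{-2\delta}$ (see \eqref{3.22}); this is also why the proposition is formulated as a time-averaged bound, as the remark following it stresses. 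The same forward trick you use for the $t_j$ half is unavailable here, since the semigroup runs only forward and an increment based at $t_{j-1}$ inevitably evaluates $u$ at the grid point. To close the gap, either treat this half as the paper does (mild formulation from $0$, with the weight $s^{-2\delta}$), or first establish a smoothing estimate of the type $\EE\|A^{1/2+\lambda}u(t)\|_{\LL^2}^2\leq C(1+t^{-2\lambda})$, which amounts to re-deriving the paper's argument; the drift and stochastic pieces of your restarted scheme are otherwise fine (the stochastic one via the It\^o isometry and the $V$-growth \eqref{growthG_V}).
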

\begin{proof}
The proof relies on a semi-group argument.

(i) Let $t_1<t_2$ belong to the time interval $[0,T]$. Then $u(t_2)-u(t_1) = \sum_{i=1}^4 T_i$, where
\begin{align*}
T_1=& S(t_2) u_0 - S(t_1) u_0, \\
T_2=&-\int_0^{t_2} S(t_2-s) B(u(s),u(s)) ds + \int_0^{t_1} S(t_1-s) B(u(s),u(s)) ds, \\
T_3=&-a \int_0^{t_2} S(t_2-s) \big( |u(s)|^{2\alpha} u(s) \big) ds + a \int_0^{t_1} S(t_1-s) \big( |u(s)|^{2\alpha} u(s) \big) ds, \\
T_4=&\int_0^{t_2} S(t_2-s) G(u(s)) dW(s) - \int_0^{t_1} S(t_1-s) G(u(s)) dW(s).
\end{align*}
Then using \eqref{3.3} and the upper estimate $\sup_{t\in [0,T]} \|S(t)\|_{{\mathcal L}(\LL^2;\LL^2)} <\infty$ we deduce
\begin{align*}
 \|T_1\|_{\LL^2} =& \big\| S(t_1) A^{-\frac{1}{2}} \big[ S(t_2-t_1) - \mbox{\rm Id } \big] A^{\frac{1}{2}} u_0 \big\|_{\LL^2} \\
 & \leq C \|S(t_1)\|_{{\mathcal L}(\LL^2;\LL^2)} \, |t_2-t_1|^{\frac{1}{2}}  \|A^{\frac{1}{2}} u_0\|_{\LL^2} \leq C \, |t_2-t_1|^{\frac{1}{2}} \|u_0\|_V.
 \end{align*}
Hence taking expected values, we deduce for every $p\in [2,\infty)$
\begin{equation} 				\label{ET1}
\EE\big( \|T_1\|_{\LL^2}^p\big) \leq C^p \, |t_2-t_1|^{\frac{p}{2}} \, \EE(\|u_0\|_V^p). 
\end{equation}
Furthermore, $T_2=-T_{2,1}-T_{2,2}$, where
\begin{align*}
T_{2,1}=&\! \int_0^{t_1} \!\! S(t_1-s) \big[ S(t_2-t_1) - \mbox{\rm Id } \big] B(u(s),u(s)) ds, \;  T_{2,2} 
=\!\int_{t_1}^{t_2} \!\! S(t_2-s)  B(u(s),u(s)) ds.
\end{align*}
Using the Minkowski inequality, \eqref{3.2}, \eqref{3.3} and \eqref{3.5}, we deduce that for $\varepsilon \in \big(0, \frac{1}{4} \big) $, 
\begin{align*}
\| T_{2,1}\|_{\LL^2} \leq &\int_0^{t_1} \| A^{1-\epsilon} S(t_1-s) \, A^{-(\frac{3}{4}-\varepsilon)} \big[ S(t_2-t_1) - \mbox{\rm Id } \big]\, 
A^{-\frac{1}{4}} B(u(s),u(s)) \|_{\LL^2} ds \\
\leq &\;  C \, |t_2-t_1|^{\frac{3}{4} - \varepsilon} \sup_{s\in [0,t_1]} \|u(s)\|_V^2 \int_0^{t_1} (t_1-s)^{-1+\varepsilon} ds. 
\end{align*}
Hence \eqref{2.18} implies that if $\EE(\|u_0\|_V^{2p})<\infty$ for some $p\in [1,\infty)$, we have
\begin{equation}			\label{3.15}
\EE \big( \|T_{2,1}\|_{\LL^2}^{p} \big) \leq C(T)\, |t_2-t_1|^{(\frac{3}{4} - \varepsilon)p} \big[ 1+  \EE \|u_0\|_V^{2p}\big]. 
\end{equation}
The Minkowski inequality, \eqref{3.2} and \eqref{3.5} imply
\[ \|T_{2,2}\|_{\LL^2} \leq \int_{t_1}^{t_2} \!\!\| A^{\frac{1}{4}} S(t_2-s) A^{-\frac{1}{4}} B(u(s),u(s)) \|_{\LL^2} ds \leq C \!
\sup_{s\in [t_1,t_2]} \|u(s)\|_V^2 \int_{t_1}^{t_2} \!\! (t_2-s)^{-\frac{1}{4}} ds .\]
Using once more \eqref{2.18} we deduce that if $\EE(\|u_0\|_V^{2p})<\infty$ for some  $p\in [1,\infty)$,
\begin{equation}			\label{3.16}
\EE\big( \|T_{2,2}\|_{\LL^2}^{p}\big) \leq   C\, |t_2-t_1|^{\frac{3}{4} p} \big[ 1+  \EE \|u_0\|_V^{2p}\big]. 
\end{equation}
A similar decomposition yields $T_3=-a\big( T_{3,1}+T_{3,2}\big)$, where
\[ T_{3,1}=\int_0^{t_1} \!\! S(t_1-s) \big[ S(t_2-t_1) - \mbox{\rm Id } \big] |u(s)|^{2 \alpha} u(s) ds, \quad T_{3,2}=\int_{t_1}^{t_2} \!\! 
S(t_2-s) |u(s)|^{2\alpha} u(s) ds.
\]
The Minkowski inequality and the upper estimates \eqref{3.2}, \eqref{3.3} and  \eqref{3.8} imply that for $\varepsilon \in \big( 0, \frac{1}{4} \big)$, 
\begin{align*}
\|T_{3,1}\|_{\LL^2} \leq & \int_0^{t_1} \| A^{1-\varepsilon} S(t_1-s) \, A^{-(\frac{3}{4}-\varepsilon)} \big[ S(t_2-t_1) - \mbox{\rm Id } \big]
\, A^{-\frac{1}{4}} \big( |u(s)|^{2\alpha} u(s)\big)  \|_{\LL^2} ds\\
\leq & \; C |t_2-t_1|^{\frac{3}{4} - \varepsilon} \, \sup_{s\in [0,t_1]} \|u(s)\|_V^{2\alpha +1} \int_0^{t_1} (t_1-s)^{-(1-\varepsilon)} ds, 
\end{align*}
and the upper estimate \eqref{2.18} implies that for $p \in [1,\infty)$, 
\begin{equation}			\label{3.17}
\EE\big( \|T_{3,1}\|_{\LL^2}^p\big)  \leq C \ |t_2-t_1|^{(\frac{3}{4}-\varepsilon)p} \, \big[ 1+ \EE \|u_0\|_V^{(2\alpha +1)p}\big].
\end{equation}
The Minkowski inequality 	and  the upper estimates \eqref{3.2} and \eqref{3.8}  imply
\begin{align*}
\| T_{3,2}\|_{\LL^2} \leq & \int_{t_1}^{t_2} \|A^{\frac{1}{4}} S(t_2-s) A^{-\frac{1}{4}}\big(  |u(s)|^{2\alpha} u(s)\big)  \|_{\LL^2} ds \\
\leq & \; C(T) \int_{t_1}^{t_2} (t_2-s)^{-\frac{1}{4}} \|u(s)\|_V^{2\alpha +1} ds \leq C(T)  |t_2-t_1|^{\frac{3}{4}} \sup_{s\in [t_1,t_2]} \|u(s)\|_V^{2\alpha +1}.
\end{align*}

\noindent Then using once more \eqref{2.18} we obtain for $p\in [1, \infty)$, 
\begin{equation} 			\label{3.18}
\EE( \|T_{3,2}\|_{\LL^2}^p) \leq C(T,p) |t_2-t_1|^{\frac{3}{4} p}  \, \big[ 1+ \EE \|u_0\|_V^{(2\alpha +1)p}\big].
\end{equation}
A similar decomposition of the stochastic integral yields $T_4=T_{4,1}+T_{4,2}$, where
\[ 
 T_{4,1}=\! \int_0^{t_1} \!\! S(t_1-s) \big[ S(t_2-t_1)- \mbox{\rm Id }\big] G(u(s)) dW(s), \quad T_{4,2}=\! \int_{t_1}^{t_2} \!\! S(t_2-s) G(u(s)) dW(s).
 \]
The Burkholder-Davis-Gundy inequality, the growth condition \eqref{growthG_H}, \eqref{3.2} and \eqref{3.3} imply for $\epsilon
\in \big(0,\frac{1}{2}\big)$ and $p\in [1,\infty)$,
\begin{align}				\label{3.19}
\EE(\|T_{4,1}&\|_{\LL^2}^{2p}) \leq \,  C_p\,  \EE\Big( \Big| \int_0^{t_1} \| S(t_1-s)  \big[ S(t_2-t_1)- \mbox{\rm Id }\big] 
G(u(s)) \|_{\mathcal L}^2 \, \mbox{\rm Tr} Q\,  ds 
\Big|^p \Big)
\nonumber \\
\leq &\,  C_p \, \EE\Big( \Big| \int_0^{t_1} \| A^{\frac{1}{2}-\varepsilon} S(t_1-s) \|_{{\mathcal L}(\LL^2;\LL^2)}^2
\|A^{-(\frac{1}{2}-\varepsilon)} \big[ S(t_2-t_1)- \mbox{\rm Id }\big]  \|_{{\mathcal L}(\LL^2;\LL^2)}^2  \nonumber \\
&\qquad \times 
\| G(u(s))\|_{\mathcal L}^2 \; \mbox{\rm Tr}Q\; ds \Big|^p \Big) \nonumber \\
\leq & \, C_p\, (\mbox{\rm Tr} Q)^p \; |t_2-t_1|^{(1-2\varepsilon)p} \Big[ K_0^p + K_1^p \, \EE\Big( \sup_{s\in [0,t_1]} \|u(s)\|_H^2\Big)\Big]
\Big( \int_0^{t_1} (t_1-s)^{-1+2\varepsilon} ds
\Big)^p \nonumber \\
\leq & \, C(T,p, \mbox{\rm Tr} Q) \, |t_2-t_1|^{(1-2\varepsilon)p} \big[ 1+\EE\big(\|u_0\|_V^{2p}\big) \big],
\end{align} 
where the last upper estimate is deduced from \eqref{2.18}.

Finally, using once more the  Burkholder-Davis-Gundy inequality, $\sup_{t\in [0,T]} \|S(t)\|_{{\mathcal L}(\LL^2;\LL^2)} <\infty$, the
growth condition \eqref{growthG_H} and \eqref{2.18}, we obtain for $p\in [1,\infty)$
 \begin{align}			\label{3.20}
\EE(\|T_{4,2}\|_{\LL^2}^{2p}) \leq &\, C_p\, \EE\Big( \Big| \int_{t_1}^{t_2} \| S(t_2-s) G(u(s))\|_{\mathcal L}^2 \, \mbox{\rm Tr } Q\, ds \Big|^p \Big) \nonumber \\
\leq & \, C_p \, (\mbox{\rm Tr } Q)^p \; \EE\Big( \Big| \int_{t_1}^{t_2} \| S(t_2-s)\|_{{\mathcal L}(\LL^2;\LL^2)}^2 \big[ K_0 + K_1 \|u(s)\|_H^2\big] ds\Big|^p\Big) \nonumber \\
\leq & \, C_p \, (\mbox{\rm Tr } Q)^p \; 
|t_2-t_1|^{p} \big[ 1+\EE\big(\|u_0\|_V^{2p}\big) \big].
\end{align}    
The upper estiimates \eqref{ET1}-- \eqref{3.20} conclude the proof of \eqref{3.9}.
\medskip

 (ii) For $j=1, \cdots, N$ and $s\in [t_{j-1},t_j)$ we have  $\nabla u(t_j) - \nabla u(s)= \sum_{i=1}^4 T_i(s,j) $, where
\begin{align*}
T_1(s,j)=& \; \nabla S(t_j) u_0 - \nabla S(s) u_0, \\
T_2(s,j) = & -\int_0^{t_j} \nabla S(t_j-r) B(u(r), u(r)) dr + \int_0^s \nabla S(s-r) B(u(r),u(r)) dr, \\
T_3(s,j) =& -a \int_0^{t_j} \nabla S(t_j-r) \big( |u(r)|^{2\alpha} u(r) \big) dr + a \int_0^s \nabla S(s-r)  \big( |u(r)|^{2\alpha} u(r) \big)  dr, \\
T_4(s,j)=& \int_0^{t_j} S(t_j-r) \nabla G(u(r)) dW(r) - \int_0^s S(s-r) \nabla G(u(r)) dW(r).
\end{align*}
Using the upper estimates \eqref{3.2} and \eqref{3.3} we obtain
\[ \|T_1(s,j)\|_{\LL^2} = \| A^\delta S(s)\; A^{-\delta} \big[ S(t_j-s)- \mbox{\rm Id } \big] A^{\frac{1}{2}} u_0\|_{\LL^2} \leq C s^{-\delta} |t_j-s|^{\delta}
\|u_0\|_{V}
\]
for any  $\delta \in (0,1]$. Therefore, given  any $\delta\in (0,\frac{1}{2})$, we deduce
\begin{equation}				\label{3.22}
\sum_{j=1}^N \int_{t_{j-1}}^{t_j} \!\! \|T_1(s,j)\|_{\LL^2}^{2} ds \leq C \Big(\frac{T}{N}\Big)^{2\delta } \|u_0\|_V^{2} \int_0^T \!\! s^{-2\delta} ds = 
 C(T,\lambda)  \Big(\frac{T}{N}\Big)^{2\delta } \|u_0\|_V^{2}.
\end{equation}
As in the proof of (i), let $T_2(s,j) = - \big( T_{2,1}(s,j) + T_{2,2}(s,j)\big)$, where
\begin{align*}
 T_{2,1}(s,j) =&\;  \!  \int_0^s \!\! \nabla S(s-r) \big[ S(t_j-s)- \mbox{\rm Id } \big] B(u(r),u(r)) dr, \\
   T_{2,2}(s,j) =&\;  \! \int_s^{t_j} \!\! \nabla S(t_j-r) B(u(r),u(r)) dr.
\end{align*}
The Minkowski inequality and the upper estimates \eqref{3.2}, \eqref{3.3} and \eqref{3.5Bis} imply for $\delta \in \big( 0, \frac{1}{2})$  and
$\gamma \in (0, \frac{1}{2}-\delta)$ 
\begin{align}				\label{3.23-1}
\sum_{j=1}^N & \int_{t_{j-1}}^{t_j} \|T_{2,1}(s,j)\|_{\LL^2}^{2} ds  \nonumber \\
& \leq \,  \sum_{j=1}^N \int_{t_{j-1}}^{t_j}\!  ds \Big\{ \int_0^s \| A^{\frac{1}{2} +  \delta+\gamma} S(s-r) 
A^{-\gamma} \big[ S(t_j-s) - \mbox{ \rm Id } \big] A^{-\delta} B(u(r),u(r)) dr \|_{\LL^2} dr \Big\}^{2} 
\nonumber \\
&\leq \,  C \Big( \frac{T}{N}\Big)^{2\gamma }
\sum_{j=1}^N \int_{t_{j-1}}^{t_j}\!  ds    \Big\{ \int_0^s (s-r)^{-(\frac{1}{2} + \delta +\gamma)}
 \| A u(r)\|_{\LL^2}^{\frac{3}{4}-\delta}   \|u(r)\|_V^{\frac{5}{4}+\delta} dr \Big\}^{2} \nonumber \\
 &\leq   C \Big( \frac{T}{N}\Big)^{2\gamma }   
  \int_0^T ds  \Big(\int_0^s (s-r)^{- (\frac{1}{2} + \delta +\gamma)} dr \Big)
   \Big( \int_0^s (s-r)^{- (\frac{1}{2} + \delta +\gamma)} 
  \|Au(r)\|_{\LL^2}^{2 ( \frac{3}{4}-\delta)} dr \Big)   \nonumber \\
  &\qquad \times \sup_{r\in [0,T]}  \|u(r)\|_V^{2(\frac{5}{4}+\delta)}     
 \end{align}
 where in the last upper estimate, we have used the Cauchy-Schwarz inequality with respect to the measure 
 $(s-r)^{-\frac{1}{2}-\delta-\gamma} 1_{(0,s)}(r) dr$.
 
 Since $\int_r^T (s-r)^{-(\frac{1}{2}+\delta+\gamma)} ds \leq \int_0^T
  s^{-(\frac{1}{2}+\delta+\gamma)} ds=C(T,\delta,\gamma)$ for any $r\in [0,T)$,  and 
  $\int_0^s (s-r)^{- (\frac{1}{2} + \delta +\gamma)} dr \leq C(T,\delta,\gamma)$ for any $s\in [0,T)$, using the
  Fubini theorem, H\"older's  and Jensen's inequalities  with respect to $dP$ with conjugate exponents 
  $\frac{1}{\frac{1}{4}+\delta}$ and $\frac{1}{\frac{3}{4}-\delta}$ , we deduce 
 \begin{align*}  
 \EE & \sum_{j=1}^N  \int_{t_{j-1}}^{t_j} \|T_{2,1}(s,j)\|_{\LL^2}^{2} ds \leq  C \Big( \frac{T}{N}\Big)^{2\gamma } 
 C(T,\delta,\gamma)  \\ 
 &\qquad \qquad \times  \EE \Big( \sup_{r\in [0,T]}  \|u(r)\|_V^{2(\frac{5}{4}+\delta)}  \int_0^T  dr 
  \|Au(r)\|_{\LL^2}^{2 ( \frac{3}{4}-\delta)}  \int_r^T (s-r)^{- (\frac{1}{2} + \delta +\gamma)}  ds \Big) \\
 \leq & \;  C \Big( \frac{T}{N}\Big)^{2\gamma }  C(T,\delta,\gamma)^2 
 \Big\{ \EE\Big(  \sup_{r\in [0,T]}  \|u(r)\|_V^{\frac{2(5+4\delta) }{1+4\delta}}\Big)
    \Big\}^{\frac{1}{4}+\delta}
    \Big\{ \EE\Big(  \int_0^T  \|Au(r)\|_{\LL^2}^2 dr \Big) \Big\}^{\frac{3}{4}-\delta}. 
   \end{align*}
Let $\lambda \in (0,\frac{1}{2})$, $\delta=\frac{1-2\lambda}{4}\in (0,\frac{1}{4})$ and $\gamma \in (0, \frac{1}{2}-2\delta)$. 
Using \eqref{2.18}  we infer 
\begin{equation}	\label{3.23}
\EE\Big(  \sum_{j=1}^N  \int_{t_{j-1}}^{t_j} \|T_{2,1}(s,j)\|_{\LL^2}^{2} ds  \Big) 
  \leq C(T,\delta)  \Big( \frac{T}{N}\Big)^{2\lambda} \Big[ 1+\EE\Big(\|u_0\|_V^{\frac{6-2\lambda}{1-\lambda}}\Big)\Big] . 
  \end{equation}

  Using the Minkowski inequality, \eqref{3.2}, \eqref{3.5Bis} and H\"older's inequality for the measure $1_{[t_{j-1},t_j]}(s) ds$ with conjugate exponents 
  $p_1=\frac{2}{\frac{3}{4}-\delta}$ and $p_2=\frac{2}{\frac{5}{4}+\delta}$ we have $p_2(\frac{1}{2}+\delta)<1$ for $\delta \in (0,\frac{1}{4})$, 
  and deduce 
 \begin{align*} 			
 \sum_{j=1}^N  \!\int_{t_{j-1}}^{t_j} &\! \! \|T_{2,2}(s,j)\|_{\LL^2}^{2}ds \leq  \; C \sum_{j=1}^N \!\int_{t_{j-1}}^{t_j} \!\! ds 
 \Big\{ \int_s^{t_j} \big\| A^{\frac{1}{2}+\delta} S(t_j-r) \; A^{-\delta} B(u(r),u(r)) \big\|_{\LL^2} dr \Big\}^{2} \\ 
 \leq & \; C \sum_{j=1}^N \!\int_{t_{j-1}}^{t_j} \!\! ds 
 \Big\{ \int_s^{t_j} (t_j-r)^{-(\frac{1}{2}+\delta) } \|A u(r)\|_{\LL^2}^{\frac{3}{4}-\delta} \|u(r)\|_V^{\frac{5}{4} + \delta} dr \Big\}^{2} \\ 
 \leq & \; C  \sup_{r\in [0,T]}  \|u(r)\|_V^{\frac{5}{2} + 2\delta}
 \sum_{j=1}^N \int_{t_{j-1}}^{t_j} \! \Big( \int_s^{t_j} (t_j-r)^{-p_2(\frac{1}{2}+\delta)} dr\Big)^{\frac{2}{p_2}} 
 \Big( \int_s^{t_j}  \|A u(r)\|_{\LL^2}^{2}   dr \Big)^{\frac{2}{p_1}} ds \\
 \leq & \;  C \sup_{r\in [0,T]}  \|u(r)\|_V^{\frac{5}{2} + 2\delta} \Big( \frac{T}{N}\Big) ^{\frac{1}{4}-\delta} \sum_{j=1}^N \!\int_{t_{j-1}}^{t_j} \!\! ds 
 \Big( \int_{t_{j-1}}^{t_j}  \|A u(r)\|_{\LL^2}^{2}   dr \Big)^{\frac{2}{p_1}}.   
 \end{align*}
 The H\"older inequality for the counting measure on $\{ 1, ..., N\}$ 
 with conjugate exponents 
 $\frac{p_1}{2}= \frac{1}{\frac{3}{4}-\delta}$ and
 $\frac{1}{\frac{1}{4}+\delta}$ yields
 \begin{align*}
 \sum_{j=1}^N  \!\int_{t_{j-1}}^{t_j} &\! \! \! \|T_{2,2}(s,j)\|_{\LL^2}^{2}ds \leq  C(T,\delta) \Big( \frac{T}{N}\Big)^{\frac{5}{4}-\delta} 
  \sup_{r\in [0,T]}  \|u(r)\|_V^{\frac{5}{2} + 2\delta}  \Big\{ \sum_{j=1}^N \! \int_{t_{j-1}}^{t_j} \!\!\! \|Au(r)\|_{\LL_2}^2 dr \Big\}^{\frac{3}{4}-\delta}
N^{\frac{1}{4}+\delta} \\
  \leq & C(T,\delta) \, \Big(\frac{T}{N}\Big)^{1-2\delta} \!
   \sup_{r\in [0,T]}  \|u(r)\|_V^{\frac{5}{2} + 2\delta} \Big\{ \int_0^T   \|Au(r)\|_{\LL_2}^2 dr \Big\}^{\frac{3}{4}-\delta}.
 \end{align*} 
 H\"older's inequality with respect to $dP$ with conjugate exponents $\frac{1}{\frac{3}{4}-\delta}$ and
 $\frac{1}{\frac{1}{4}+\delta}$ implies
 \begin{align} 	\label{3.24}
 \EE \sum_{j=1}^N \int_{t_{j-1}}^{t_j}\!\!  \|T_{2,2}(s,j)\|_{\LL^2}^{2}  ds  \leq &\; C(T,\delta)   \Big(\frac{T}{N} \Big)^{ 1-2\delta}
  \Big\{ \EE\Big(  \sup_{r\in [0,T]}  \|u(r)\|_V^{ \frac{10+8\delta}{1 + 4\delta}} \Big) \Big\}^{\frac{1}{4}+\delta} \nonumber \\
 &\; \times \Big\{ \EE\Big( \int_0^T \|A u(r)\|_{\LL^2}^2  dr\Big) \Big\}^{\frac{3}{4}-\delta}.
 \end{align}
Let $\lambda \in (0,\frac{1}{2})$ and  $\delta=\frac{1-2\lambda}{4}\in (0,\frac{1}{4})$. 
The inequalities \eqref{3.23}, \eqref{3.24} and \eqref{2.18} imply 
\begin{equation} 			\label{3.25}
\EE  \sum_{j=1}^N \int_{t_{j-1}}^{t_j} \|T_2(s,j)\|_{\LL^2}^{2} ds \leq C(T,\lambda) 
 \Big( \frac{T}{N}\Big)^{2\lambda } \Big[ 1+\EE\big (\|u_0\|_V^{ \frac{6-2\lambda}{1-\lambda}}\Big)\Big].
\end{equation}
A similar decomposition yields $T_3(s,j) =-a\big(  T_{3,1}(s,j) + T_{3,2}(s,j) \big)$, where
\begin{align*}
T_{3,1}(s,j) =& \! \int_0^s \!\! \nabla S(s-r) \big[ S(t_j-s) - \mbox{\rm Id } \big] \big( |u(r)|^{2\alpha} u(r) \big) dr, \\
T_{3,2}(s,j) \!=& \int_s^{t_j}  \!\!\nabla S(t_j-r) \big( |u(r)|^{2\alpha} u(r) \big) dr.
\end{align*}
The Minkowski inequality and the upper estimates \eqref{3.2}, \eqref{3.3}, \eqref{3.8Bis} imply for $\delta \in (0, \frac{1}{2})$
and $\gamma \in (0, \frac{1}{2}-\delta)$, 
\begin{align*}				
 \|T_{3,1}(s,j)\|_{\LL^2}   \leq & \int_0^s \big\|A^{\frac{1}{2}+\delta +\gamma} S(s-r) \; A^{-\gamma} 
\big[ S(t_j-s) - \mbox{\rm Id } \big]\, A^{-\delta} \big( |u(r)|^{2\alpha} u(r)\big) \big\|_{\LL^2} dr \\
\leq & \; C (t_j-s)^\gamma \int_0^s (s-r)^{-(\frac{1}{2}+\delta +\gamma)} \|A u(r)\|_{\LL^2}^{\frac{3}{4}-\delta} 
\|u(r)\|_{V}^{2\alpha + \frac{1}{4} +\delta} dr.
\end{align*}
 Therefore, given $\delta\in \big(0,\frac{1}{2}\big)$ and $\gamma \in (0, \frac{1}{2}-\delta)$
 \begin{align*} 			
 \sum_{j=1}^N \int_{t_{j-1}}^{t_j}\ &\! \!  \|T_{3,1}(s,j)\|_{\LL^2}^{2}  ds \\
 \leq &\;  C \Big( \frac{T}{N}\Big)^{2\gamma} 
  \int_0^T  \Big\{ \int_0^s (s-r)^{-(\frac{1}{2}+\delta +\gamma)} \|A u(r)\|_{\LL^2}^{\frac{3}{4}-\delta} \|u(r)\|_{V}^{2\alpha + \frac{1}{4} +\delta}
 dr\Big\}^2 ds,
  \end{align*} 
  which is similar to \eqref{3.23-1} replacing the exponent $\frac{5}{4}+\delta $ of $\|u(r)\|_V$ by $2\alpha + \frac{1}{4} + \delta$. 
  Therefore, we deduce for $\delta \in (0, \frac{1}{4})$ 
  \begin{equation}  \label{3.26}
  \EE \sum_{j=1}^N  \int_{t_{j-1}}^{t_j} \|T_{3,1}(s,j)\|_{\LL^2}^{2} ds   
  \leq C(T,\delta) \Big( \frac{T}{N}\Big)^{1-4\delta} \Big[ 1+\EE\Big(\|u_0\|_V^{ \frac{16\alpha +2+8\delta }{1+4\delta}}\Big)\Big]. 
  \end{equation} 
The Minkowski  inequality, \eqref{3.2} and \eqref{3.8Bis} imply for $\delta \in (0,\frac{1}{4})$ 
 \begin{align*} 			
 \sum_{j=1}^N  \!\int_{t_{j-1}}^{t_j} &\! \! \|T_{3,2}(s,j)\|_{\LL^2}^{2}\, ds  \leq   \sum_{j=1}^N \!\int_{t_{j-1}}^{t_j} \!\! ds 
 \Big\{ \int_s^{t_j} \big\| A^{\frac{1}{2}+\delta} S(t_j-r) \; A^{-\delta} \big( |u(r)|^{2\alpha} u(r) \big) \big\|_{\LL^2} dr \Big\}^{2} \\ 
  \leq & \; C \sum_{j=1}^N \!\int_{t_{j-1}}^{t_j} \!\! ds 
 \Big\{ \int_s^{t_j} (t_j-r)^{-(\frac{1}{2}+\delta) } \|A u(r)\|_{\LL^2}^{\frac{3}{4}-\delta} \|u(r)\|_V^{2\alpha + \frac{1}{4} + \delta} dr \Big\}^{2} 
  \end{align*}
 The arguments  for proving  \eqref{3.24} imply 
 \begin{align} 	\label{3.27}
 \EE  \sum_{j=1}^N \int_{t_{j-1}}^{t_j}\!\!  \|T_{3,2}(s,j)\|_{\LL^2}^{2}  ds \leq &\, C(T,\delta ) 
  \Big( \frac{T}{N} \Big)^{1-2\delta} 
 \Big\{ \EE\Big( \sup_{r\in [0,T]} \|u(r)\|_V^{ \frac{16\alpha +2+8\delta }{1+4\delta}   } \Big) \Big\}^{\frac{1}{4}+\delta} \nonumber \\
&\times  \Big\{ \EE\Big( \int_0^T \|A u(r)\|_{\LL^2}^2  dr\Big) \Big\}^{\frac{3}{4}-\delta}.
 \end{align}
The inequalities \eqref{3.26}, \eqref{3.27} and \eqref{2.18} imply that for $\lambda \in (0, \frac{1}{2})$ and $\delta = \frac{1-2\lambda}{4} 
\in \big( 0, \frac{1}{4}\big)$, 
\begin{equation} 			\label{3.28}
\EE \sum_{j=1}^N \int_{t_{j-1}}^{t_j} \|T_3(s,j)\|_{\LL^2}^{2} ds  \leq C(T,a,\lambda) 
 \Big( \frac{T}{N}\Big)^{2\lambda} \Big[ 1+\EE\Big(\|u_0\|_V^{p(\lambda)}\Big) \Big].
\end{equation}

Finally, the stochastic integral can be decomposed as follows: $T_4(s,j) = T_{4,1}(s,j) + T_{4,2}(s,j)$, where
\[ T_{4,1}(s,j)=\! \! \int_0^s \!\!\! S(s-r) \big[ S(t_j -s) - \mbox{\rm Id } \big] \nabla G(u(r)) dW(r), \; 
T_{4,2}(s,j)=\!\!\int_s^{t_j} \!\!\!S(t_j-r) \nabla G(u(r)) dW(r).\]
The 
$L^2(\Omega)$-isometry,  \eqref{3.2}, \eqref{3.3} and the growth condition  \eqref{growthG_V} 
imply for $\delta \in \big(0,\frac{1}{2} \big)$ 
\begin{align}			\label{3.29}
\EE
& \sum_{j=1}^N \int_{t_{j-1}}^{t_j} \!\! \! \| T_{4,1}(s,j)\|_{\LL^2}^{2} ds   \nonumber  
  \\
&\leq  \EE \sum_{j=1}^N 
\int_{t_{j-1}}^{t_j} \!\ \! \int_0^s \! \!  \! \big\|S(s-r) 
\big[ S(t_j -s) - \mbox{\rm Id } \big] A^{\frac{1}{2}} G(u(r)) \big\|_{\mathcal L}^2 \, \mbox{\rm Tr} Q \, dr  ds  \nonumber \\
&\leq \EE\,   \sum_{j=1}^N \int_{t_{j-1}}^{t_j} \! ds\!   \int_0^s \big\| A^{\frac{1}{2}-\delta} S(s-r)\big\|_{{\mathcal L}(\LL^2 ; \LL^2)}^2 
\big\| A^{-(\frac{1}{2}-\delta)} \big[ S(t_j -s) - \mbox{\rm Id } \big] \big\|_{{\mathcal L}(\LL^2 ; \LL^2)}^2 \nonumber \\
&\qquad\qquad\qquad  \times \|G(u(r))\|_{\widetilde{\mathcal L}}^2 \, \mbox{\rm Tr} Q\,  dr   \nonumber \\
&\leq  \mbox{\rm Tr} Q \;   \EE \int_0^T \! ds \! \int_0^s (s-r)^{-1+2\delta} (t_j-s)^{1-2\delta} \big[ \tilde{K}_0 
+ \tilde{K}_1 \|u(r)\|_V^2 \big] dr
\nonumber \\
&\leq  \mbox{\rm Tr} Q \;  \Big[ \tilde{K}_0 + \tilde{K}_1 \; 
\EE\Big( \sup_{r\in [0,T]} \|u(r)\|_V^{2} \Big) \Big] \, \Big( \frac{T}{N} \Big)^{1-2\delta}
 \int_0^T   s^{2\delta } ds \nonumber \\
 &\leq C(T,\mbox{\rm Tr } Q, \delta) 
\Big( \frac{T}{N} \Big)^{1-2\delta} \big[ 1+\EE(\|u_0\|_V^{2}) \big], 
\end{align} 
Finally, the $L^2(\Omega)$-isometry, $\sup_r  \|S(r)\|_{{\mathcal L}(\LL^2 ; \LL^2)}$ and 
the growth condition \eqref{growthG_V}  and \eqref{2.18} imply  
\begin{align}				\label{3.30}
\EE\Big( \sum_{j=1}^N 
 \!\int_{t_{j-1}}^{t_j} \!\! \! \| T_{4,2}(s,j)\|_{\LL^2}^{2} ds  \Big) 
&\leq \EE \sum_{j=1}^N \int_{t_{j-1}}^{t_j}  \int_s^{t_j} 
\| S(t_j-r)\|_{{\mathcal L}(\LL^2 ; \LL^2)}^2 \|  G(u(r))\|_{\widetilde{\mathcal L}}^2 \, \mbox{\rm Tr } Q\, dr ds  \nonumber \\
&\leq \mbox{\rm Tr Q} \; \EE \sum_{j=1}^N \int_{t_{j-1}}^{t_j} ds  \int_s^{t_j}  \big[ \tilde{K}_0 + \tilde{K}_1 \|u(r)\|_V^2 \big] dr\nonumber \\
&\leq  C(T, \mbox{\rm Tr } Q) \,   \frac{T}{N}  \, \big[ 1+\EE(\|u_0\|_V^{2}) \big].
\end{align}
 For $\alpha \in [1,\frac{3}{2}]$ and $\lambda \in (0,\frac{1}{2})$, $2<\frac{6-2\lambda}{1-\lambda} < p(\lambda):= 
\frac{2+8\alpha-2\lambda}{1-\lambda}$. 
Therefore,  the upper estimates \eqref{3.22}, \eqref{3.25}, \eqref{3.28}--\eqref{3.30} imply for  $\lambda \in (0,\frac{1}{2})$ 
\[ 
\EE  \sum_{j=1}^N \int_{t_{j-1}}^{t_j}\!\!   \| \nabla (u(s) - u(t_j))\|_{\LL^2}^{2} ds \leq C(T,a,\mbox{\rm Tr } Q, \lambda) 
\Big( \frac{T}{N} \Big)^{2\lambda} 
\Big[ 1+ \EE\Big(\|u_0\|_V^{p(\lambda)} \Big)\Big].
\] 
Small changes in the proof of this upper estimate prove that under similar assumptions 
\[ 
\EE  \sum_{j=1}^N \int_{t_{j-1}}^{t_j}\!\!   \| \nabla (u(s) - u(t_{j-1}))\|_{\LL^2}^{2} ds  \leq C(T, \mbox{\rm Tr} Q,\lambda)
 \Big( \frac{T}{N} \Big)^{2\lambda} 
\Big[ 1+ \EE\Big(\|u_0\|_V^{p(\lambda)  } \Big) \Big].
\] 
This completes the proof of \eqref{3.10}. 
\end{proof} 
\begin{remark} 
Note that the above proof shows that when time increments of the gradient of the solution are dealt with, due to the term containing the
initial condition, one cannot obtain moments of $\EE( \|u(t)-u(s)\|_V^{2})$ uniformly in $s,t$ with $0\leq s<t\leq T$. Furthermore, in order
to obtain the "optimal" time regularity, that is almost $\frac{1}{2}$, we also need a time integral.
\end{remark}

\section{Well-posedness and moment estimates of the  implicit time  Euler scheme} \label{sEuler}
We  first prove the existence of the fully time implicit time Euler scheme. Fix $N\in \{1,2, ...\}$, let $h:=\frac{T}{N}$ denote the time mesh,
 and for $j=0, 1, ..., N$ set $t_j:=j \frac{T}{N}$. 

The fully implicit time Euler scheme $\{ u^k ; k=0, 1, ...,N\}$ is defined by $u^0=u_0$ and for $\varphi \in V$
\begin{align}	\label{4.1}
\Big( u^k-u^{k-1} &+ h \nu A u^k + h B\big( u^k,u^k\big) + h\, a\,  |u^k|^{2\alpha} u_N(t_k)  , \varphi\Big)  \nonumber \\
&= \big( G(u^{k-1}) [W(t_k)-W(t_{k-1})]\, , \, \varphi), \qquad k=1,2, ..., N.
\end{align}
 Set $\Delta_j W:= W(t_j)-W(t_{j-1})$, $j=1, ...,N$.
\medskip

The following proposition states the existence and uniqueness of the sequence $\{ u^k\}_{k=0, ...,N}$ and provides moment estimates which do not
depend on $N$.
\begin{prop} 		\label{prop_uk}		
Let $\alpha \in [1,\frac{3}{2}]$ and Condition {\bf (G)} be satisfied. The time fully  implicit scheme \eqref{4.1} has a 
 solution $\{u^k\}_{k=1, ...,N} \in V\cap \HH^2$ 
  Furthermore,
\begin{align}	\label{mom_uk_V}
\sup_{N\geq 1} \EE\Big( \max_{k=0, ...,N}& \|u^k\|_{V}^2 + \frac{T}{N} \sum_{k=1}^N \|A u^k\|_{\LL^2}^2\nonumber \\
&+ \frac{T}{N}\sum_{k=1}^N \big[  \|u^k\|_{\LL^{2\alpha+2}}^{2\alpha+2} + \| |u^k|^\alpha \nabla u^k\|_{\LL^2}^2\big] \Big) <\infty.
\end{align}
\end{prop}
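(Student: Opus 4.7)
The plan is to solve \eqref{4.1} one step at a time via a Galerkin-plus-topology argument, then derive the uniform moment bounds from two energy tests (against $u^k$ and $Au^k$), with the Brinkman-Forchheimer term absorbing the bilinear contribution in the $V$-level estimate.

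Fix $k$ and assume $u^{k-1}$ and $\Delta_k W := W(t_k) - W(t_{k-1})$ are $\mathcal{F}_{t_{k-1}}$-measurable. Project \eqref{4.1} onto $\mathcal{H}_n$ to get a finite-dimensional equation $F_n(u^k_n) = 0$. Using $b(v,v,v) = 0$ and $(|v|^{2\alpha}v,v) \geq 0$, one checks $(F_n(v),v) > 0$ for $\|v\|_H$ large, so a standard consequence of Brouwer's theorem produces a root $u^k_n$. The uniform a priori bound
\[
\|u^k_n\|_H^2 + h\nu\|\nabla u^k_n\|_{\LL^2}^2 + ha\|u^k_n\|_{\LL^{2\alpha+2}}^{2\alpha+2} \leq C\big(\|u^{k-1}\|_H, \|G(u^{k-1})\Delta_k W\|_H\big)
\]
allows extraction of a weak-$V$ limit $u^k$; Rellich compactness gives strong convergence in every $\LL^p$ with $p<6$, so $|u^k_n|^{2\alpha}u^k_n \to |u^k|^{2\alpha}u^k$ in $\LL^{(2\alpha+2)/(2\alpha+1)}$ (here $\alpha \leq 3/2$ is used), and the bilinear term converges by the usual density argument. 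Rewriting $h\nu Au^k = u^{k-1}-u^k + G(u^{k-1})\Delta_k W - hB(u^k,u^k) - ha|u^k|^{2\alpha}u^k$ with each right-hand side summand in $H$ gives $u^k \in \HH^2$. Uniqueness is obtained by testing the equation for $w = v_1-v_2$ with $w$: the cross term $\langle B(w,v_1),w\rangle$ is absorbed by the BF monotonicity via \eqref{fgh2}, which requires $4\nu a>1$ at $\alpha=1$.

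For the moment bounds, testing \eqref{4.1} with $\varphi = u^k$ and using $b(u^k,u^k,u^k)=0$ yields
\[
\tfrac{1}{2}\big[\|u^k\|_H^2 - \|u^{k-1}\|_H^2 + \|u^k - u^{k-1}\|_H^2\big] + h\nu\|\nabla u^k\|_{\LL^2}^2 + ha\|u^k\|_{\LL^{2\alpha+2}}^{2\alpha+2} = \big(G(u^{k-1})\Delta_k W, u^k\big).
\]
Decomposing $u^k = u^{k-1} + (u^k - u^{k-1})$, the first piece of the stochastic term has zero conditional expectation by independence of $\Delta_k W$ from $\mathcal{F}_{t_{k-1}}$, and the second is absorbed by $\tfrac{1}{2}\|u^k-u^{k-1}\|_H^2$ combined with Young. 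Summing over $k$, taking expectation, using $\EE\|\Delta_k W\|_K^2 = h\,\mathrm{Tr}\,Q$, \eqref{growthG_H}, and a discrete Gronwall argument (with a Burkholder step for $\max_k$) produces the $L^\infty(H)$, $L^2(V)$ and $L^{2\alpha+2}$ pieces of \eqref{mom_uk_V}.

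The delicate $V$-level estimate comes from testing with $\varphi = Au^k \in H$, legitimate since $u^k \in \HH^2$. Integration by parts on the BF contribution gives
\[
\langle |u^k|^{2\alpha}u^k, Au^k\rangle = \int_D |u^k|^{2\alpha}|\nabla u^k|^2\, dx + \tfrac{\alpha}{2}\int_D |u^k|^{2\alpha-2}\big|\nabla|u^k|^2\big|^2\, dx \geq \big\||u^k|^\alpha \nabla u^k\big\|_{\LL^2}^2,
\]
while the time-difference and viscous terms produce $\tfrac{1}{2}[\|u^k\|_V^2 - \|u^{k-1}\|_V^2 + \|u^k - u^{k-1}\|_V^2]$ and $h\nu\|Au^k\|_{\LL^2}^2$. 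The main obstacle is the bilinear term $h|\langle B(u^k, u^k), Au^k\rangle|$, which in 3D cannot be controlled by viscosity alone; the Brinkman-Forchheimer term is what rescues the argument. I would apply \eqref{fgh2} with $f = u^k$, $g = \nabla u^k$ and test function $Au^k$, choosing $\varepsilon_0 = \nu/2$ and $\varepsilon_1 = a\nu$ so that $\varepsilon_0\|Au^k\|_{\LL^2}^2$ and $\tfrac{\varepsilon_1}{4\varepsilon_0}\||u^k|^\alpha \nabla u^k\|_{\LL^2}^2$ are absorbed by the viscous and BF terms, leaving only a linear $C\|\nabla u^k\|_{\LL^2}^2$ remainder (for $\alpha=1$ one uses Cauchy-Schwarz and Young directly, and the compatibility $\varepsilon_0 < \nu$, $\tfrac{1}{4\varepsilon_0} < a$ reduces exactly to $4\nu a > 1$). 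The stochastic term is handled analogously using \eqref{growthG_V} in place of \eqref{growthG_H}, and a second discrete Gronwall argument --- drawing on the $H$-level bounds already established --- yields \eqref{mom_uk_V} uniformly in $N$.
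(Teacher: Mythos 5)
Your overall route coincides with the paper's: existence via the Girault--Raviart corollary of Brouwer's theorem on a Galerkin space, uniform bounds from testing the projected equation with the solution itself, weak-$V$ compactness plus strong $\LL^p$ ($p<6$) convergence to pass to the limit in the bilinear and Brinkman--Forchheimer terms; then the $H$-level bound by testing with $u^k$, splitting the noise term along $u^{k-1}$ and $u^k-u^{k-1}$, and closing with the Davis maximal inequality and the discrete Gronwall lemma; then the $V$-level bound by testing with $Au^k$, where your integration-by-parts identity for the Brinkman--Forchheimer term is exactly \eqref{*} and your absorption of $\langle B(u^k,u^k),Au^k\rangle$ via \eqref{fgh2} with $\varepsilon_0=\nu/2$, $\varepsilon_1=a\nu$ is precisely the content of Lemma \ref{upper_nablaB} (your explicit remark that $\alpha=1$ forces $4\nu a>1$ is consistent with \eqref{1.18Bis}, a hypothesis the paper uses throughout even though it is not restated in the proposition).

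The one step that fails as written is your one-shot $\HH^2$ bootstrap: for $u^k\in V$ one only has $B(u^k,u^k)=\Pi\big[(u^k\cdot\nabla)u^k\big]\in\LL^{3/2}$ (H\"older with $u^k\in\LL^6$, $\nabla u^k\in\LL^2$) and $|u^k|^{2\alpha}u^k\in\LL^{6/(2\alpha+1)}\subset\LL^{3/2}$ for $\alpha\leq\frac{3}{2}$, so the right-hand side of $h\nu Au^k=u^{k-1}-u^k+\cdots$ is \emph{not} in $H$ at the first pass. You need either a two-step elliptic bootstrap (since $\LL^{3/2}\subset\HH^{-1/2}$, first conclude $u^k\in\HH^{3/2}$, whence $u^k\in\LL^p$ for all $p<\infty$ and $\nabla u^k\in\LL^3$, and only then is the right-hand side in $H$, giving $u^k\in\HH^2$), or --- the route implicit in the paper --- perform the $Au^k$ energy test at the Galerkin level, where $Au^k(m)\in V_m$ is an admissible test function because the basis $\{e_l\}$ is orthogonal in $V$, and pass the uniform bounds to the limit. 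Separately, your uniqueness claim is extraneous: the proposition asserts only existence, and the paper explicitly notes the Galerkin roots need not be unique. Your sketch would in any case give uniqueness only for $h$ small (the leftover $Ch\|w\|_{\LL^2}^2$ must be absorbed by $\|w\|_{\LL^2}^2$), and at $\alpha=1$ the Young-absorption against the monotonicity \eqref{1.10} requires $4\nu a\kappa>1$, as in Theorem \ref{strong-fully-alpha>1}, not merely $4\nu a>1$.
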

\begin{proof} The proof is divided in two steps.\\
{\bf Step 1: Existence of the scheme} 
We first prove that for fixed $N\geq 1$ \eqref{4.1} has a 
 solution in $V\cap \LL^{2\alpha +2}$. 
For technical reasons we consider a Galerkin approximation.
As in Section \ref{s-gwp}   let $\{e_l\}_l$ denote an orthonormal basis of $H$ made of elements of $\HH^2$ which are orthogonal in $V$. 
Since $\alpha \in [1, \frac{3}{2}]$, the Gagliardo Nirenberg inequality implies that $\HH^1 \subset \LL^{2\alpha +2}$.\\
 For $m=1,2, ...$ let
 $V_m=\mbox{ \rm span }(e_1, ..., e_m) \subset \HH^2$  and let $P_m:V\to V_m$ denote the projection from 
$V$ to $V_m$. 
 In order to find a solution to  \eqref{4.1} we project this equation on $V_m$, that
is we define by induction a sequence $\{u^k(m)\}_{k=0, ...,N} \in V_m$ such that $u^0(m)= P_m(u_0)$, and for $k=1, ...,N$ and $\varphi \in V_m$
\begin{align}		\label{def-ukm}
\big( u^k(m)-u^{k-1}(m), \varphi\big)  &+ h \Big[ \nu \big( \nabla u^k(m), \nabla \varphi)  + \big\langle B\big( u^k(m), u^k(m)\big), \varphi \big\rangle 
\nonumber  \\
&+  a\,  \big( |u^k(m)|^{2\alpha} u^k(m)  , \varphi\big)\Big]  
= \big( G(u^{k-1}(m)) \Delta_kW \, , \, \varphi\big).
\end{align}
For almost every $\omega$ set $R(0,\omega):= \|u_0(\omega)\|_{\LL^2}$. Fix  $k=1, ...,N$ and suppose that  for $j=0, ..., k-1$ the ${\mathcal F}_{t_j}$-
measurable random variables 
$u^j(m)$ have been defined, and that  
\[ R(j,\omega):=\sup_{m\geq 1}
\|u^j(m,\omega)\|_{\LL^2}<\infty\quad \mbox{\rm for almost every } \omega.\]
 We prove that $u^k(m)$ exists and satisfies a.s. $ \sup_{m\geq 1} \|u^k(m,\omega)\|_{\LL^2}<\infty$. 
The argument is based on the following result \cite[Cor 1.1]{GirRav}  page 279, which can be deduced from Brouwer's theorem.
\begin{prop} \label{G-R}  Let $H$ be a Hilbert space of finite dimension, $(.,.)_H$ denote its inner product, and $\Phi:H\to H$
be continuous such that for some $\mu>0$,
\[ \big( \Phi(f),f\big)_H \geq 0, \quad \mbox{\rm for all}\;  f\in H \; \mbox{\rm with } \|f\|_H=\mu.
\]
Then there exists $f\in H$ such that  $\Phi(f)=0$ and $\|f\|_H\leq \mu$. 
\end{prop} 
For $\omega \in \Omega$ let $\Phi^k_{m,\omega}:V_m\to V_m$ be defined for $f\in V_m$  as the solution of 
\begin{align*}
\big(\Phi^k_{m,\omega}(f) , \varphi\big) = &\; \big( f-u^{k-1}(m,\omega), \varphi\big)  + h \Big[ \nu \big( \nabla f,\nabla \varphi\big) 
+ \big\langle P_m B(f,f),\varphi
\big\rangle  +a \big( P_m(|f|^{2\alpha} f), \varphi\big)\Big] \\
&- \big( P_m G(u^{k-1}(m,\omega) \Delta_kW(\omega), \varphi \big), \qquad \forall \varphi \in V_m.
\end{align*}
Then
\begin{align*}  \big( \Phi^k_{m,\omega}(f),f\big) = &\; \|f\|_{\LL^2}^2 - \big( u^{k-1}(m,\omega), f\big) + h\nu \|\nabla f\|_{\LL^2}^2 
+ h\,a \|f\|_{\LL^{2\alpha +2}}^{2\alpha +2} \\
& -\big( G(u^{k-1}(m,\omega)\big)  \Delta_kW(\omega),f\big). 
\end{align*}
The Young inequality implies $\big|\big( u^{k-1}(m,\omega), f\big) \leq \frac{1}{2} \|f\|_{\LL^2}^2 + \frac{1}{2}  \|u^{k-1}(m,\omega)\|_{\LL^2}^2$ 
and the growth condition  \eqref{growthG_H} implies  
\begin{align*}
 \big| \big( G(u^{k-1}(m,\omega) \Delta_k W (\omega),f\big)\big| \leq &\; \big\|G\big(u^{k-1}(m,\omega)\big)\|_{\mathcal L} 
\|\Delta_kW(\omega)\|_{K}\, \|f\|_{\LL^2}\\
\leq &\; \frac{1}{4} \|f\|_{\LL^2}^2 + \big[ K_0 + K_1 \|u^{k-1}(m,\omega)\|_{\LL^2}^2 \big]  \,   \|\Delta_k W(\omega)\|^2_{K}. 
\end{align*}
Hence 
\[ \big( \Phi^k_{m,\omega}(f) ,f\big) \geq \frac{1}{4} \|f\|_{\LL^2}^2 -  \frac{1}{2} \|u^{k-1}(m,\omega)\|_{\LL^2}^2   - \big[ K_0 + K_1 
\|u^{k-1}(m,\omega)\|_{\LL^2}^2 \big] \|\Delta_k W(\omega)\|^2_K  \geq 0 \] 
 if 
\[ \|f\|_{\LL^2}^2 = R^2(k,\omega) := 4\Big[ K_0 \|\Delta_k W(\omega)\|^2_{K}
 + R^2(k-1,\omega)\Big( \frac{1}{2} +K_1 \|\Delta_k W(\omega)\|^2_K
\Big) \Big]. \]
Proposition \ref{G-R} implies the existence of  
$u^k(m,\omega) \in V_m$ such that $\Phi^k_{m,\omega}\big(u^k(m,\omega)\big) = 0$, 
and $\|u^k(m,\omega) \|^2_{\LL^2} \leq R^2(k,\omega)$; note that this element $u^k(m,\omega)$ need not be unique. 
Furthermore, the random variable $u^k(m)$ is ${\mathcal F}_{t_k}$-measurable. 

The definition of $u^k(m)$ implies that it is a solution to \eqref{def-ukm}. Taking $\varphi = u^k(m)$ in \eqref{def-ukm} and using the Young
inequality, we obtain 
\begin{align*}
\| u^k(m)&\|_{\LL^2}^2 + h\, \nu  \|\nabla u^k(m)\|_{\LL^2}^2 + h\, a \|u^k(m)\|_{\LL^{2\alpha +2}}^{2\alpha +2} \\
&=  \big( u^{k-1}(m), u^k(m)\big) +  \big( G(u^{k-1}(m) \Delta_k W, u^k(m)\big)
\\
&\leq \frac{1}{4} \|u^k(m)\|_{\LL^2}^2 + \|u^{k-1}(m)\|_{\LL^2}^2 + \frac{1}{4} \|u^k(m)\|_{\LL^2}^2 + \big[ K_0+K_1 \|u^{k-1}(m)\|_{\LL^2}^2\big]
\|\Delta_k W\|_{K}^2.
\end{align*} 
Hence a.s. 
\begin{align*}
 \sup_{m\geq 1} \Big[ \frac{1}{2} \|u^k(m,\omega)\|_{\LL^2}^2 &+ h\, \nu \|\nabla u^k(m,\omega)\|_{\LL^2}^2 +
h\, a \| u^k(m,\omega)\|_{\LL^{2\alpha +2}}^{2\alpha +2} \Big] \\
&\leq \; R^2(k-1,\omega) \big[ 1+K_1 \|\Delta_k W(\omega)\|_{K}^2 \big] + K_0   \|\Delta_k W(\omega)\|_K^2 ,
\end{align*}  
Therefore, for fixed $k$ and almost every $\omega$, the sequence $\{u^k(m,\omega)\}_m$ is bounded in $V\cap \LL^{2\alpha+2}$; it has a
subsequence (still denoted $\{u^k(m,\omega)\}_m$) which converges weakly in $V\cap \LL^{2\alpha +2}$ to $\phi_k(\omega)$. The random variable
$\phi_k$ is ${\mathcal F}_{t_k}$-measurable. 

Since $D$ is bounded, the embedding of  $V$  in $H$ is compact; hence the subsequence $\{u^k(m,\omega)\}_m$ converges strongly to 
$\phi_k(\omega)$ in $\LL^2$.
 
  Then by definition $u^0(m)$ converges strongly to $u_0$. 
  We next prove by induction on $k$ that $\phi^k$ solves \eqref{4.1}. 
  Fix a positive integer $m_0$ and consider the equation \eqref{def-ukm} for $k=1, ..., N$, 
   $\varphi \in V_{m_0}$,  and $m\geq m_0$.
  As $m\to \infty$ we have a.s. 
  \[ \big(u^k(m)-u^{k-1}(m) , \varphi) \to \big(\phi^k-\phi^{k-1} , \varphi) .\]
  Furthermore, the antisymmetry of $B$ \eqref{B} and the Gagliardo-Nirenberg inequality $\|g\|_{\LL^4} \leq C \|\nabla g\|_{\LL^2}^{\frac{3}{4}}
  \|g\|_{\LL^2}^{\frac{1}{4}}$ yield a.s. 
  \begin{align*}
  \big| \big\langle B\big(u^k(m),u^k(m)\big)& - B(\phi^k,\phi^k), \varphi \big\rangle \big| \\
  \leq &\; \big| \big\langle B\big(u^k(m)-\phi^k, \varphi \big), u^k(m)  
  \big\rangle \big| +\big| \big\langle B\big(\phi^k, \varphi \big), u^k(m)  -\phi^k \big\rangle \big| \\
  \leq & \; \|\nabla \varphi\|_{\LL^2} \|u^k(m)-\phi^k\|_{\LL^4} \, \big[ \|u^k(m)\|_{\LL^4} + \| \phi^k\|_{\LL^4}\big] \\
  \leq & C \; \|\varphi\|_{\LL^2}  \big[ \max_{m}\|u^k(m)\|_V^{\frac{7}{4}}  + \| \phi^k\|_V^{\frac{7}{4}} \big] \| u^k(m)-\phi^k\|_{\LL^2}^{\frac{1}{4}} \to 0
  \end{align*} 
 as $m\to \infty$. The inequality \eqref{1.9} implies
\begin{align*} 
\big| \big(|u^k(m)|^{2\alpha} u^k(m) -& |\phi^k|^{2\alpha} \phi^k, \varphi\big) \big| \leq \; C \int |u^k(m)-\phi^k| \big( |u^k(m)|^{2\alpha} + |\phi^k|^{2\alpha}\big)
\, |\varphi |\,  dx \\ 
\leq &\, C  \|\varphi\|_{\LL^\infty} \|u^k(m)-\phi^k\|_{\LL^2} \big( \|u^k(m)\|_{L^{4\alpha}}^{4\alpha}  + \|\phi^k\|_{\LL^{4\alpha}}^{4\alpha} \big) \\
\leq &\; C \|\varphi\|_{\HH^2} \big( \max_m  \|u^k(m)\|_{V}^{4\alpha}  + \|\phi^k\|_{V}^{4\alpha} \big) \|u^k(m)-\phi^k\|_{\LL^2} \to 0
\end{align*}
as $m\to \infty$. Note that the last upper estimate follows from the inclusion $\HH^1\subset \LL^p$ for $p\in [2,6]$ and $\alpha \in [1,\frac{3}{2}]$. 
Finally, the Cauchy-Schwarz inequality and the Lipschitz condition \eqref{LipG} imply
\begin{align*}
\big| \big(  G\big( u^{k-1}(m)\big) \Delta_kW, \varphi\big) - &\big( G\big(  \phi^{k-1} \big) \Delta_kW, \varphi\big) \big| 
\leq \|\varphi\|_{\LL^2}
 \| G(u^{k-1}(m)- G(\phi^{k-1})  \|_{\mathcal L} \| \Delta_kW\|_K \\
 &\leq  \sqrt{L}\, \|\varphi\|_{\LL^2}\, \|u^{k-1}(m)-\phi^{k-1}\|_{\LL^2} \, \| \Delta_kW\|_K  \to 0
\end{align*}
as $m\to \infty$. Therefore, letting $m\to \infty$ in \eqref{def-ukm}, we deduce 
\begin{align*}
\Big( \phi^k -\phi^{k-1}  &+ h \nu A \phi^k + h B\big( \phi^k ,\phi^k \big) + h\, a\,  |\phi^k |^{2\alpha} \phi^k  , \varphi\Big) 
= \big( G(\phi^{k-1} ) \Delta_k W \, , \, \varphi), \quad \forall \varphi \in V_{m_0}. 
\end{align*} 
Since $\cup_{m_0} V_{m_0}$ is dense in $V$, we deduce that $\phi^k$ is a solution to \eqref{4.1}. 

\noindent {\bf Step 2: Moment estimates} 
We next prove \eqref{mom_uk_V} for any solution $\{ u^k\}_{k=0, ...,N}$ to \eqref{4.1}. We  first study the $\LL^2$-norm of the sequence. 
 Write \eqref{4.1} with $\varphi=u^k$ and use the identity $(f,f-g) = \frac{1}{2} \big[ \|f\|_{\LL^2 }- \|g\|_{\LL^2}^2 
+ \| f-g\|_{\LL^2}^2\big]$. Using the Cauchy-Schwarz and Young inequalities, and the growth condition \eqref{growthG_H}, this yields for $k=1, ..., N$  
\begin{align*}
\frac{1}{2}  \|u^k\|_{\LL^2}^2 &- \frac{1}{2}  \|u^{k-1} \|_{\LL^2}^2 + \frac{1}{2}  \|u^k-u^{k-1}\|_{\LL^2}^2 +h\nu \|\nabla u^k\|_{\LL^2}^2
+ha \|u^k\|_{\LL^{2\alpha +2}}^{2\alpha +2}  \\
& =
 \big( G(u^{k-1}) \Delta_k W\, , u^k-u^{k-1}\big) + \big( G(u^{k-1}) \Delta_k W\, , u^{k-1}\big) \\
 &\leq \frac{1}{2} \|u^k-u^{k-1}\|_{\LL^2}^2  + \frac{1}{2} \big[ K_0 + K_1\|u^{k-1}\|_{\LL^2}^2\big]  \|\Delta_k W\|_{K}^2 +
 \big( G(u^{k-1}) \Delta_k W\, , u^{k-1}\big). 
\end{align*}
For any $K=1, ..., N$, adding the above inequalities for $k=1, ...,K$ we deduce
\begin{align}		\label{majscheme1}
&\|u^K\|_{\LL^2}^2 +2 h\nu \sum_{k=1}^K \|\nabla u^k\|_{\LL^2}^2
+2ha \sum_{k=1}^K \|u^k\|_{\LL^{2\alpha +2}}^{2\alpha +2} \nonumber  \\
&\quad  \leq  \|u_0\|_{\LL^2}^2 + \sum_{k=1}^K \big[ K_0 + K_1\|u^{k-1}\|_{\LL^2}^2\big]  \|\Delta_k W\|_{K}^2  
+2  \sum_{k=1}^K  \big( G(u^{k-1}) \Delta_k W\, , u^{k-1}\big). 
\end{align}
Therefore,
\begin{align*}
\EE\Big( \max_{1\leq K\leq N}& \|u^K\|_{\LL^2}^2\Big) + 2h\EE\Big( \sum_{k=1}^N \big[ \nu \|\nabla u^k\|_{\LL^2}^2 
+a\|u^k\|_{\LL^{2\alpha +2}}^{2\alpha +2}\big] \Big) \\
\leq & \; 2 \EE\Big( \max_{1\leq K\leq N} \Big[ \|u^K\|_{\LL^2}^2 +  2h \sum_{k=1}^K \big( \nu \|\nabla u^k\|_{\LL^2}^2 
+a\|u^k\|_{\LL^{2\alpha +2}}^{2\alpha +2}\big)\Big]
\Big) \\
\leq  &\; 2 \EE(\|u_0\|_{\LL^2}^2) + 2 h {\rm Tr} (Q) \sum_{k=0}^{N-1} \big[ K_0+K_1\EE(\|u^k\|_{\LL^2}^2)  \big] \\
& +  4 \EE\Big( \max_{1\leq K\leq N} \sum_{k=1}^K  \big( G(u^{k-1}) \Delta_k W\, , u^{k-1}\big) \Big).
\end{align*}
The Davis and then Young inequalities imply
\begin{align*}
\EE\Big( \max_{1\leq K\leq N}& \sum_{k=1}^K  \big( G(u^{k-1}) \Delta_k W\, , u^{k-1}\big) \Big) \leq 3 \EE\Big( \Big\{ \sum_{k=0}^{N-1} \|u^{k}\|_{\LL^2}^2
\big[ K_0+K_1 \|u^{k}\|_{\LL^2}^2\big] h {\rm Tr} Q\Big\}^{\frac{1}{2}} \Big) \\
&\leq \frac{1}{4} \EE\Big( \max_{0\leq k\leq N-1} \|u^k\|_{\LL^2}^2\Big) + 9 \EE\Big( h{\rm Tr} Q \sum_{k=0}^{N-1} \big[  K_0+K_1 \|u^{k}\|_{\LL^2}^2\big]
\Big).
\end{align*}
Hence we deduce
\begin{align}  	\label{partiel1}
\frac{1}{2} \EE\Big( \max_{1\leq K\leq N} \|u^K\|_{\LL^2}^2\Big) &+ 2h\EE\Big( \sum_{k=1}^N \big[ \nu \|\nabla u^k\|_{\LL^2}^2 
+a\|u^k\|_{\LL^{2\alpha +2}}^{2\alpha +2}\big] \Big)  \nonumber \\
\leq  & \; 2 \EE(\|u_0\|_{\LL^2}^2) + 74  T K_0 {\rm Tr} Q + 74 K_1 {\rm Tr} Q \sum_{k=0}^{N-1} h \EE(\|u^k\|_{\LL^2}^2).
\end{align} 
Neglecting the sum in the left hand side and using the discrete Gronwall lemma, we obtain
\[  \sup_{N\geq 1} \EE\Big( \max_{1\leq K\leq N} \|u^K\|_{\LL^2}^2\Big)  \leq C(T, {\rm Tr} Q, \|u_0\|_{\LL^2}^2, K_0, K_1).\]
Plugging this upper estimate in \eqref{partiel1}, we obtain
\begin{align*}	\label{mom_uk}
\sup_{N\geq 1} \EE\Big( \max_{k=0, ...,N}& \|u^k\|_{\LL^2}^2 + \frac{T}{N} \sum_{k=1}^N  \big[ \nu \|\nabla u^k\|_{\LL^2}^2
+ a \|u^k\|_{\LL^{2\alpha+2}}^{2\alpha+2} \big] \Big) <\infty.
\end{align*} 
A similar argument  with $\varphi = A u^k$, integrating by parts, and using Lemma \ref{upper_nablaB} and inequality
\eqref{*}  yields
\[ \sup_{N\geq 1}\EE\Big( \max_{1\leq K\leq N} \|\nabla u^K\|_{\LL^2}^2 + \frac{T}{N} \sum_{k=1}^N \big[ \|A u^k\|_{\LL^2}^2 
 + \| |u^k|^{\alpha } \nabla u^k\|_{\LL^2}^2 \big] \Big)  = C_2(\alpha) <\infty.\]
This completes the proof of the proposition.
\end{proof}

\section{Strong convergence of the implicit time Euler scheme}		\label{s-convergence}
Let $u$ be the solution to \eqref{3D-NS} and $\{ u^j:= u_N(t_j)\}_{j=0, ...,N}$ solve the fully implicit time Euler scheme defined in \eqref{4.1}. 
Let $e_j:= u(t_j)-u^j$. Using 
\eqref{3D-NS} and \eqref{4.1}, we deduce $e_0=0$ and for $j=1, ..., N$ and $\varphi \in V$
\begin{align}	\label{5.1}
\big( &e_j-e_{j-1}\, , \, \varphi \big) + \nu \int_{t_{j-1}}^{t_j}  \big( \nabla u(s) - \nabla u^j\, ,\,  \nabla \varphi\big)  ds 
+ \int_{t_{j-1}}^{t_j}  \big\langle B(u(s),u(s)) - B(u^j,u^j)\, , \, \varphi\big\rangle ds \nonumber \\
&+ a  \int_{t_{j-1}}^{t_j} \big( |u(s)|^{2\alpha}u(s) -  |u^j|^{2\alpha}u^j\, ,  \, \varphi \big) ds =  
 \int_{t_{j-1}}^{t_j} \big( [G(u(s))-G(u^{j-1}) ] dW(s) \, , 
\, \varphi\big).  
\end{align}
Note that since $\alpha \in [1,\frac{3}{2}]$ and $\HH^1\subset \LL^p$ for $p\in [2,6]$, H\"older's inequality with exponents $2,3$ and $6$ implies that
the space integral defining the inner product  $\big( |u(s)|^{2\alpha}u(s) -  |u^j|^{2\alpha}u^j\, ,  \, \varphi \big) $ is converging for $u(s), u^j, \varphi \in V$.
The following convergence theorem is one of the main results of this paper.
\begin{theorem}	\label{strong-fully-alpha>1}
Suppose that   condition {\bf (G)} holds. Let $\alpha \in [1,\frac{3}{2}]$; when $\alpha=1$, suppose that $4\nu a (1\wedge \kappa) >1$,
 where $\kappa>0$ is the constant defined in inequality \eqref{1.10}.    \\
Fix $\lambda \in (0,\frac{1}{2} )$ and set $p(\lambda)=\frac{2+8\alpha-2\lambda}{1-\lambda}$. 
Let  $u_0\in L^{p(\lambda)}(\Omega ; V)$, $u$ be the solution to \eqref{3D-NS} and $\{u^j\}_{j=0, ...,N}$
solve the fully implicit scheme \eqref{4.1}. Then there exists a  positive constant  $C:=C(\nu, \alpha, a, \kappa, {\rm Tr}\, Q)$
 independent of $N$ such that  for $N$ large enough
\begin{align} 		\label{strong-full->1}
\EE\Big( \max_{1\leq j\leq N} \|u(t_j)-u^j\|_{\LL^2}^2 &+ \frac{T}{N} \sum_{j=1}^N \|\nabla[ u(t_j) - u^j] \|_{\LL^2}^2 \Big) \nonumber \\
&\leq C \, \Big( \frac{T}{N}\Big)^{2\lambda}  \Big[ 1+ \EE\Big( \|u_0\|_V^{p(\lambda)}\Big)\Big].
\end{align}
\end{theorem}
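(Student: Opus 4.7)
The plan is to test the error equation \eqref{5.1} with $\varphi = e_j$, use the polarization identity $(e_j - e_{j-1}, e_j) = \tfrac12(\|e_j\|_{\LL^2}^2 - \|e_{j-1}\|_{\LL^2}^2 + \|e_j - e_{j-1}\|_{\LL^2}^2)$, and sum in $j$. The viscous term splits as $\nu h \|\nabla e_j\|_{\LL^2}^2 + \nu\int_{t_{j-1}}^{t_j}(\nabla u(s)-\nabla u(t_j),\nabla e_j)\,ds$; a Young step absorbs half of the viscous dissipation and leaves a time-increment remainder controlled by Proposition~\ref{time-increments}(ii). For the bilinear contribution I would decompose
\[
B(u(s),u(s))-B(u^j,u^j) = \bigl[B(u(s),u(s))-B(u(t_j),u(t_j))\bigr] + B(e_j,u(t_j)) + B(u^j,e_j),
\]
noting that $\langle B(u^j,e_j),e_j\rangle = 0$ by \eqref{B}; the time-increment bracket is bounded via the duality estimates on $B$ from Section~\ref{preliminary}, producing contributions of the form $\|\nabla(u(s)-u(t_j))\|_{\LL^2}$ times high moments of $u$.

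The critical 3D obstruction is the cross term $h\langle B(e_j,u(t_j)),e_j\rangle = -h\,b(e_j,e_j,u(t_j))$. For $\alpha \in (1,\tfrac32]$ I would apply inequality \eqref{fgh2} with $f = u(t_j)$, $g = e_j$, and test function $\nabla e_j$, producing $\varepsilon_0 h \|\nabla e_j\|_{\LL^2}^2 + \tfrac{\varepsilon_1}{4\varepsilon_0} h \||u(t_j)|^\alpha e_j\|_{\LL^2}^2 + C h\|e_j\|_{\LL^2}^2$. The Brinkman-Forchheimer term produces, via the standard monotonicity inequality for $|u|^{2\alpha}u$ applied to $u(t_j)$ versus $u^j$ with constant $\kappa$, a positive contribution $a\kappa h\||u(t_j)|^\alpha e_j\|_{\LL^2}^2$ that absorbs the middle summand once $\varepsilon_0,\varepsilon_1$ are small; the first summand is absorbed by viscosity, and the third feeds the Gronwall driver. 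For $\alpha = 1$ the direct bound $|b(e_j,e_j,u(t_j))| \leq \|\nabla e_j\|_{\LL^2}\,\||u(t_j)|e_j\|_{\LL^2}$ from Cauchy-Schwarz replaces \eqref{fgh2}, and the hypothesis $4\nu a\kappa > 1$ is exactly what permits Young's simultaneous absorption into $\nu\|\nabla e_j\|_{\LL^2}^2$ and $a\kappa\||u(t_j)|e_j\|_{\LL^2}^2$. The remaining time-increment piece $(|u(s)|^{2\alpha}u(s) - |u(t_j)|^{2\alpha}u(t_j),e_j)$ is handled by Lemma~\ref{Lem3.2} combined with Proposition~\ref{time-increments}.

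For the stochastic term I would split $e_j = e_{j-1} + (e_j - e_{j-1})$; the pairing with $e_j-e_{j-1}$ is absorbed, modulo a trace term from the It\^o isometry, into $\tfrac12\|e_j-e_{j-1}\|_{\LL^2}^2$ on the left-hand side, while the pairing with the $\mathcal{F}_{t_{j-1}}$-measurable $e_{j-1}$ forms a discrete martingale whose maximal function is controlled after summation by Burkholder-Davis-Gundy. The Lipschitz bound $\|G(u(s))-G(u^{j-1})\|_{\mathcal L}^2 \leq 2L(\|u(s)-u(t_{j-1})\|_{\LL^2}^2 + \|e_{j-1}\|_{\LL^2}^2)$ from \eqref{LipG} separates time-increment error from the Gronwall driver. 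Summing in $j$, taking $\max_K$ and expectations, and invoking Theorem~\ref{th_gwp} and Proposition~\ref{prop_uk} for the uniform moments of $\|u\|_V$ and $\|u^j\|_V$, one arrives at a discrete Gronwall inequality whose forcing term is $C(T/N)^{2\lambda}[1+\EE\|u_0\|_V^{p(\lambda)}]$, supplied by Proposition~\ref{time-increments}(ii). The discrete Gronwall lemma then yields \eqref{strong-full->1}.

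The \emph{main obstacle} is the cross term $b(e_j,e_j,u(t_j))$: in 2D, Ladyzhenskaya's inequality $\|w\|_{\LL^4}^2 \leq C\|w\|_{\LL^2}\|\nabla w\|_{\LL^2}$ closes this against viscosity with only a quadratic penalty $\|u(t_j)\|_V^2\|e_j\|_{\LL^2}^2$, but in 3D the Gagliardo-Nirenberg exponent $\tfrac34$ in \eqref{GagNir} makes such pure viscous absorption impossible. The Brinkman-Forchheimer nonlinearity is precisely the mechanism that supplies the missing dissipation via $\||u|^\alpha e_j\|_{\LL^2}^2$; the restriction $\alpha \in [1,\tfrac32]$ comes from Lemma~\ref{Lem3.2}, and the initial-data integrability $p(\lambda)=(2+8\alpha-2\lambda)/(1-\lambda)$ is dictated by the worst exponent appearing in Proposition~\ref{time-increments}(ii) once the H\"older and Young steps are tallied.
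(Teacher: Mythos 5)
Your proposal follows the paper's own proof essentially step for step: the same energy argument testing \eqref{5.1} with $\varphi=e_j$, the same use of the monotonicity \eqref{1.10}--\eqref{1.27} to place the dissipative terms $a\kappa h\,\| |u(t_j)|^\alpha e_j\|_{\LL^2}^2$ and $a\kappa h\,\| |u^j|^\alpha e_j\|_{\LL^2}^2$ on the left, the same absorption of the critical cross term $b(e_j,e_j,u(t_j))$ via \eqref{fgh2} for $\alpha\in(1,\tfrac32]$ and via Cauchy--Schwarz together with $4\nu a\kappa>1$ for $\alpha=1$, the same Davis/Young handling of the stochastic terms, and the same conclusion through Proposition \ref{time-increments} and the discrete Gronwall lemma for $N$ large, your regrouping of the bilinear difference being an equivalent variant of the paper's $T_{j,1}$--$T_{j,3}$ splitting. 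One small correction: the paper bounds the Brinkman--Forchheimer time-increment term $T_{j,5}$ directly from the pointwise inequality \eqref{1.9} combined with H\"older and Gagliardo--Nirenberg (Lemma \ref{Lem3.2} concerns a single field, not differences, and is used only inside the proof of Proposition \ref{time-increments}), so your citation of that lemma here would need a difference version plus interpolation of $\|A^\delta e_j\|$, whereas the direct route is simpler.
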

\begin{remark}
Note that the various parameters of the model $\nu, \alpha, a, {\rm Tr}\,(Q)$ only appear in the multiplicative constant $C$ in the right hand side
of \eqref{strong-full->1}, but not in the exponent $\lambda$ which can be chosen arbitrarily close to $\frac{1}{2}$  if $u_0\in V$ is deterministic, or
if $u_0$ is a $V$-valued Gaussian random variable independent of $W$.
\end{remark}  
\noindent {\it Proof of Theorem \ref{strong-fully-alpha>1}}\\
\noindent  (i) We  first suppose that $\alpha \in (1, \frac{3}{2}]$.

Using the identity \eqref{5.1} with $\varphi = e_j$, the equality $(f,f-g)=\frac{1}{2}\big[ \|f\|_{\LL^2}^2 - \|g\|_{\LL^2}^2 + \|f-g\|_{\LL^2}^2\big]$ 
and the estimate  \eqref{1.27}, we deduce that for some $\kappa >0$ we have for $j=1, ..., N$
\begin{align}	\label{5.2}
\frac{1}{2} \big( \|e_j\|_{\LL^2}^2 - \|e_{j-1}\|_{\LL^2}^2\big) &+ \frac{1}{2} \|e_j-e_{j-1}\|_{\LL^2}^2 + \nu h \|\nabla e_j\|_{\LL^2}^2  \nonumber \\
& + a\kappa h \| |u(t_j)|^\alpha e_j\|_{\LL^2}^2 + a\kappa h \| |u^j|^\alpha e_j\|_{\LL^2}^2 
 \leq \sum_{l=1}^7 T_{j,l},
\end{align}
where by the antisymmetry property \eqref{B} we have
\begin{align*}
T_{j,1}=&-\int_{t_{j-1}}^{t_j} \!\! \big\langle B\big(u(s)-u(t(j)), u(s)\big) \, , \, e_j\big\rangle ds, \quad
T_{j,2}=-\int_{t_{j-1}}^{t_j}\!\!  \big\langle B\big(e_j,u(s)\big) \, , \, e_j\big\rangle ds, \\
T_{j,3}=&-\int_{t_{j-1}}^{t_j} \!\! \big\langle B\big( u^j , u(s)-u^j\big) \, , e_j\big\rangle ds =
 -\int_{t_{j-1}}^{t_j}\!\!  \big\langle B\big( u^j, u(s)-u(t_j)\big) \, , \, e_j\big\rangle ds, \\
T_{j,4}=&-\nu \!\int_{t_{j-1}}^{t_j}\!\! \! \big( \nabla ( u(s)-u(t_j)) ,  \nabla e_j\big) ds, \\
T_{j,5}=&-a \int_{t_{j-1}}^{t_j} \!\! \big(|u(s)|^{2\alpha} u(s) - |u(t_j)|^{2\alpha} u(t_j)  ,  e_j\big) ds, \\
T_{j,6}=& \int_{t_{j-1}}^{t_j}\!\! \! \big([G(u(s))-G(u^{j-1}) dW(s), e_j-e_{j-1}\big), \\
T_{j,7}=& \int_{t_{j-1}}^{t_j}\!\!  \big([G(u(s))-G(u^{j-1}) \big] dW(s), e_{j-1}\big).
\end{align*}
We next prove upper estimates of  the terms $T_{j,l}$ for $l=1, ...,5$, and of the expected value of $T_{j,6}$ and $T_{j,7}$.

Using the H\"older inequality with exponents $2,3,6$, the Sobolev embedding $\HH^1\subset \LL^6$ and the Gagliardo Nirenberg inequality \eqref{GagNir},
 we deduce for $\epsilon_1>0$
 \begin{align}		\label{5.3}
 |T_{j,1}|\leq & \int_{t_{j-1}}^{t_j} \|u(s)-u(t_j)\|_{\LL^3} \|\nabla u(s)\|_{\LL^2} \|e_j\|_{\LL^6} ds \nonumber \\
 \leq & C_6 C_3  \| e_j\|_{\HH^1} \int_{t_{j-1}}^{t_j} \|u(s)-u(t_j)\|_{\LL^2}^{\frac{1}{2}} \| \nabla [u(s)-u(t_j)]\|_{\LL^2}^{\frac{1}{2}}  \|\nabla u(s)\|_{\LL^2}
  ds	\nonumber \\
 \leq & \epsilon_1 \nu h \|e_j\|_{\HH^1}^2 \nonumber \\
 & + \frac{(C_6 C_3)^2}{4\epsilon_1 \nu} \sup_{s\in [0,T]} \|u(s)\|_V^2 
 \Big( \int_{t_{j-1}}^{t_j}\!\! \|u(s)-u(t_j)\|_{\LL^2}^2ds\Big)^{\frac{1}{2}}  \Big( \int_{t_{j-1}}^{t_j}\!\! \|\nabla[u(s)-u(t_j)]\|_{\LL^2}^2ds\Big)^{\frac{1}{2}} 
 \nonumber \\
 \leq & \epsilon_1 \nu h \big[ \|e_j\|_{\LL^2}^2 + \| \nabla e_j\|_{\LL^2}^2\big]  + \frac{(C_6C_3)^4}{64 \epsilon_1^2 \nu^2} \sup_{s\in [0,T]} \|u(s)\|_V^4
 \int_{t_{j-1}}^{t_j}\!\! \|u(s)-u(t_j)\|_{\LL^2}^2ds 	\nonumber \\
 &+ \int_{t_{j-1}}^{t_j}\!\! \|\nabla[u(s)-u(t_j)]\|_{\LL^2}^2ds,
 \end{align}
 where the last inequalities are deduced from the Cauchy Schwarz and Young inequalities.
 
 Let $T_{j,2}=-T_{j,2,1} - T_{j,2,2}+T_{j,2,3}$, where 
 \begin{align*}  T_{j,2,1}=&\int_{t_{j-1}}^{t_j} \big\langle B\big( e_j,u(t_j)\big) , e_j\big\rangle ds, \quad  T_{j,2,2}=\int_{t_{j-1}}^{t_j} 
 \big\langle B\big( e_j,u(s)-u(t_j)\big) , u(t_j)\big\rangle ds, \\
 T_{j,2,3}=&\int_{t_{j-1}}^{t_j} \big\langle B\big( e_j,u(s)- u(t_j)\big) , u^j\big\rangle ds.
 \end{align*}
The antisymmetry \eqref{B} implies
\[ \big\langle B\big( e_j,u(t_j)\big) , e_j\big\rangle = - \big\langle B\big( e_j,e_j\big) , u(t_j)\big\rangle = -\sum_{k,l=1}^3 \int_{D}
(e_j)_k \partial_k(e_j)_l u(t_j)_l dx.\]
Hence the upper estimate \eqref{fgh2} with $f=u(t_j)_l$, $g=(e_j)_k$ and $h=\partial_k(e_j)_l$ yields for $\epsilon_2, \bar{\epsilon}_2>0$
\begin{align*} \big| \big\langle B\big( e_j,u(t_j)\big) , e_j\big\rangle \big| \leq &\epsilon_2 \nu \sum_{k,l} \|\partial_k(e_j)_l\|_{\LL^2}^2 +
\sum_{k,l} \frac{\bar{\epsilon}_2 a \kappa}{4\epsilon_2 \nu} \| |u(t_j)_l|^\alpha (e_j)_k\|_{\LL^2}^2 \\
& + \frac{C_\alpha}{\epsilon_2 \nu (\bar{\epsilon}_2 a \kappa)^{\frac{1}{\alpha -1}}} \|(e_j)_k\|_{\LL^2}^2,
\end{align*} 
which implies
\[ |T_{j,2,1}|\leq  \epsilon_2 \nu \, h \, \|\nabla e_j\|_{\LL^2}^2 + \frac{\bar{\epsilon}_2 a \kappa}{4\epsilon_2 \nu}\,   h\,  \| |u(t_j)|^\alpha e_j\|_{\LL^2}^2
+ \frac{C(\alpha, \nu, a, \kappa)}{\epsilon_2  (\bar{\epsilon}_2 )^{\frac{1}{\alpha -1}}} \, h\,  \|e_j\|_{\LL^2}^2.
\]
Using a  similar computation based on \eqref{fgh2} with  $f=u(t_j)_l$, $g=(e_j)_k$  and $h=\partial_k[u(s)-u(t_j)]_l$ for $k,l=1,2,3$, 
 summing on $k,l$  and integrating on the time interval $[t_{j-1}, t_j]$, we obtain for $\tilde{\epsilon}_2>0$
\[ |T_{j,2,2}|\leq   \int_{t_{j-1}}^{t_j} \!\! \| \nabla [ u(s)-u(t_j)]\|_{\LL^2}^2 ds  
+  \,  \frac{\tilde{\epsilon}_2 a \kappa}{4}\,   h\,  \| |u(t_j)|^\alpha e_j\|_{\LL^2}^2
+  \frac{\bar{C}(\alpha,  a, \kappa)}{ (\tilde{\epsilon}_2 )^{\frac{1}{\alpha -1}}} \, h \, \| e_j\|_{\LL^2}^2  .
\]
Replacing  $f=u(t_j)$ by $f=u^j$  in the above estimate, we obtain
\[ |T_{j,2,3}|\leq  \int_{t_{j-1}}^{t_j} \! \| \nabla [ u(s)-u(t_j)]\|_{\LL^2}^2 ds 
+ \frac{\tilde{\epsilon}_2 a \kappa}{4}\,   h\,  \| |u^j|^\alpha e_j\|_{\LL^2}^2
+ \frac{\bar{C}(\alpha,  a, \kappa)}{ (\tilde{\epsilon}_2 )^{\frac{1}{\alpha -1}}}  h \, \| e_j\|_{\LL^2}^2. 
\]
The three previous inequalities imply for $\epsilon_2, \bar{\epsilon}_2, \tilde{\epsilon}_2>0$, 
\begin{align}	\label{5.4}
|T_{j,2}|\leq & \;  \Big[  
\frac{C(\alpha, \nu, a, \kappa)}{\epsilon_2  (\bar{\epsilon}_2 )^{\frac{1}{\alpha -1}}}+  
  \frac{2\bar{C}(\alpha,  a, \kappa)}{ (\tilde{\epsilon}_2 )^{\frac{1}{\alpha -1}}} \Big]  \, h\,  \|e_j\|_{\LL^2}^2 
+ \epsilon_2 \nu \, h\, \|\nabla e_j\|_{\LL^2}^2 
+  \frac{\tilde{\epsilon_2}}{4} a \kappa \, h\,  \| |u^j|^\alpha e_j\|_{\LL^2}^2  \nonumber \\
& +\Big[ \frac{\bar{\epsilon}_2}{4\epsilon_2 \nu} + \frac{\tilde{\epsilon_2}}{4}\Big] a \kappa \, h\, 
 \| |u(t_j)|^\alpha e_j\|_{\LL^2}^2		  +   2  \int_{t_{j-1}}^{t_j}\!  \| \nabla [ u(s)-u(t_j)]\|_{\LL^2}^2 ds.
\end{align}
Using once more \eqref{fgh2} with $f=(u^j)_k$, $g=(e_j)_l$ and $h=\partial_k\big( [u(s)-u(t_j)]_l\big)$ for $k,l=1,2,3$, and summing on $k,l$, 
we obtain for $\epsilon_3>0$, 
\[ \big| \big\langle B\big( u^j, u(s)-u(t_j)\big) , e_j \big\rangle \big| \leq \|\nabla [u(s)-u(t_j)|\|_{\LL^2}^2 + 
\frac{\epsilon_3 a\kappa}{4}\,  \||u^j|^\alpha e_j\|_{\LL^2}^2  + \frac{C_\alpha}{(\epsilon_3 a \kappa)^{\frac{1}{\alpha -1}}}  \|e_j\|_{\LL^2}^2.
\]
 Integrating on $[t_{j-1}, t_j]$ we deduce for $\epsilon_3>0$ 
\begin{equation} 	\label{5.6}
 |T_{j,3}|\leq  \frac{C_\alpha}{(\epsilon_3 a \kappa)^{\frac{1}{\alpha -1}}} \, h\, \|e_j\|_{\LL^2}^2 + 
 \frac{\epsilon_3 a\kappa}{4}\, h\, \||u^j|^\alpha e_j\|_{\LL^2}^2  + \int_{t_{j-1}}^{t_j} \| \nabla [u(s)-u(t_j)|\|_{\LL^2}^2 ds . 
\end{equation}
The Cauchy-Schwarz and Young inequalities imply that for $\epsilon_4>0$,
\begin{equation}		\label{5.7}
|T_{j,4}|\leq \epsilon_4 \nu \, h\, \|\nabla e_j\|_{\LL^2}^2 + \frac{\nu}{4 \epsilon_4} \int_{t_{j-1}}^{t_j} \| \nabla [ u(s)-u(t_j)]\|_{\LL^2}^2 ds.
\end{equation}
Since $\big| |f|^{2\alpha} f - |g|^{2\alpha} g\big| \leq C(\alpha) |f-g| \big( |f|^{2\alpha} + |g|^{2\alpha} \big)$, the H\"older inequality with exponents
2,3 and 6 implies 
\begin{align*}	
\big| \big( |u(s)|^{2\alpha} u(s) - |u(t_j)|^{2\alpha} u(t_j) , e_j\big) \big| \leq & \, C(\alpha)\int_{\RR^3} \big[ |u(s)|^{2\alpha} + |u(t_j)|^{2\alpha} \big]
|u(s)-u(t_j)| |e_j| dx \\
\leq & \, C(\alpha) \big[ \|u(s)\|_{\LL^{4\alpha}}^{2\alpha} +      \|u(t_j)\|_{\LL^{4\alpha}}^{2\alpha}  \big] \|u(s)-u(t_j)\|_{\LL^3} \|e_j\|_{\LL^6} .
\end{align*} 
The Sobolev embedding $\HH^1\subset \LL^6$ and the Gagliardo Nirenberg inequality  \eqref{GagNir} yield for $\epsilon_5>0$ 
\begin{align}		\label{5.8}
|T_{j,5}|\leq & C(\alpha)  \sup_{s\in [0,T]} \|u(s)\|_V^{2\alpha} \int_{t_{j-1}}^{t_j} \|e_j\|_{\HH^1} \|u(s)-u(t_j)\|_{\LL^2}^{\frac{1}{2}}
\|\nabla [ u(s)-u(t_j)]\|_{\LL^2}^{\frac{1}{2}} ds 	\nonumber \\
\leq & \epsilon_5 \nu \, h \, \big[ \|e_j\|_{\LL^2}^2 + \| \nabla e_j\|_{\LL^2}^2 \big] +
\frac{C(\alpha)^2}{8 \epsilon_5 \nu } \sup_{s\in [0,T]} \|u(s)\|_V^{8\alpha}  \int_{t_{j-1}}^{t_j} \|u(s)-u(t_j)\|_{\LL^2}^2 ds  	\nonumber \\
& +
\frac{C(\alpha)^2}{8 \epsilon_5 \nu }  \int_{t_{j-1}}^{t_j} \| \nabla [u(s)-u(t_j)]\|_{\LL^2}^2 ds, 
\end{align}
where the last upper estimate is deduced from the H\"older inequality with exponents 2,4 and 4 and the Young inequality. 

Fix $J\in \{ 1, 2, ..., N\}$; adding the inequalities \eqref{5.2} for $j=1, ..., J$, using the identity $e_0=0$ and the upper estimates \eqref{5.3}--\eqref{5.8}
we deduce that for any positive numbers $\epsilon_j, j=1, ...,5$,  $\bar{\epsilon}_2$ and $\tilde{\epsilon}_2$, we have
\begin{align}	\label{5.8Bis}
\frac{1}{2}& \|e_J\|_{\LL^2}^2 + \frac{1}{2} \sum_{j=1}^J \|e_j-e_{j-1}\|_{\LL^2}^2 + \nu\, h\, \sum_{j=1}^J \|\nabla e_j\|_{\LL^2}^2 +
a \kappa \, h \, \sum_{j=1}^J \Big[ \| |u(t_j)|^\alpha e_j\|_{\LL^2}^2 + \| |u^j|^{\alpha} e_j\|_{\LL^2}^2\Big] \nonumber  \\
&\leq \sum_{j=1}^J \sum_{l=6}^7 T_{j,l} +  \Big[ \epsilon_1\nu +  \frac{2 \, \bar{C}(\alpha,a,\kappa)}{(\tilde{\epsilon}_2)^{\frac{1}{\alpha-1}}} 
+\frac{C(\alpha,\nu,a,\kappa)}{\epsilon_2 \nu (\bar{\epsilon}_2)^{\frac{1}{\alpha -1}}} 
+ \frac{C_\alpha}{(\epsilon_3 a \kappa)^{\frac{1}{\alpha -1}}} +  \epsilon_5 \nu\Big] \, h\, \sum_{j=1}^J \|e_j\|_{\LL^2}^2  \nonumber \\
& \quad + \Big( \epsilon_1+\epsilon_2 + \epsilon_4+ \epsilon_5\Big) \nu \, h\, \sum_{j=1}^J \|\nabla e_j\|_{\LL^2}^2
+ \Big( \frac{\bar{\epsilon}_2}{4\epsilon_2 \nu} + \frac{\tilde{\epsilon}_2}{4}\Big) a\kappa \, h\, \sum_{j=1}^J \| |u(t_j)|^\alpha e_j\|_{\LL^2}^2 \nonumber \\
&\quad +    \frac{\tilde{\epsilon}_2+ \epsilon_3}{4}   a\kappa \, h\, \sum_{j=1}^J \| |u^j|^\alpha e_j\|_{\LL^2}^2
+ \frac{(C_6C_3)^4}{64 \epsilon_1^2 \nu^2} \sup_{s\in [0,T]} \|u(s)\|_V^4 \sum_{j=1}^J \int_{t_{j-1}}^{t_j} \|u(s)-u(t_j)\|_{\LL^2}^2 ds \nonumber  \\
&\quad + \frac{C(\alpha)^2}{8\epsilon_5 \nu}\sup_{s\in [0,T]} \|u(s)\|_V^{8\alpha}  \sum_{j=1}^J \int_{t_{j-1}}^{t_j} \|u(s)-u(t_j)\|_{\LL^2}^2 ds \nonumber \\
&\quad +  \Big[ 4+  \frac{\nu}{4\epsilon_4} + \frac{C(\alpha)^2}{8\epsilon_5 \nu}    
 \Big] \sum_{j=1}^J \int_{t_{j-1}}^{t_j}
 \| \nabla [u(s)-u(t_j)|\|_{\LL^2}^2 ds.  
\end{align} 
Choose positive $\epsilon_1, \epsilon_2, \epsilon_4$ and $\epsilon_5$ such that $\epsilon_1+\epsilon_2+\epsilon_4+\epsilon_5 \leq \frac{1}{2}$; then
choose positive  $\bar{\epsilon}_2$, $\tilde{\epsilon}_2$ and $\epsilon_3$ such that $\frac{\bar{\epsilon}_2}{4\epsilon_2\nu} + \frac{\tilde{\epsilon}_2}{4}
\leq 1$ and $\frac{\tilde{\epsilon}_2 +\epsilon_3 }{4}\leq 1$.  We deduce the existence of positive constants  $C_i$, $i=1,2,3$ 
depending on $\nu, a, \kappa$, $ \epsilon_j$ for $j=1,...,5$, $\bar{\epsilon}_2$ and $\tilde{\epsilon}_2$, such that
\begin{align*}	
\frac{1}{2}& \|e_J\|_{\LL^2}^2 + \frac{1}{2} \sum_{j=1}^J \|e_j-e_{j-1}\|_{\LL^2}^2 + \frac{\nu}{2} \, h\, \sum_{j=1}^J \|\nabla e_j\|_{\LL^2}^2 
\leq  C_1 \, h\, \sum_{j=1}^J \|e_j\|_{\LL^2}^2 \\
& + C_2 \Big[ 1+  \sup_{s\in [0,T]} \|u(s)\|_V^{8\alpha} \Big] \sum_{j=1}^J \int_{t_{j-1}}^{t_j}  \|u(s)-u(t_j)\|_{\LL^2}^2 ds  \\
& + C_3 
 \sum_{j=1}^J \int_{t_{j-1}}^{t_j} \| \nabla [u(s)-u(t_j)\|_{\LL^2}^2 ds
 + \sum_{j=1}^J \sum_{l=6}^7 T_{j,l} . 
\end{align*}
Let $N$ be large enough to ensure $C_1 \frac{T}{N} < \frac{1}{4}$. 
Note that for non negative numbers $\{ x(J), y(J);$ 
  \linebreak[4]  $J=1, ..., N\}$ we have $\frac{1}{2} \big[ \sup_{J\leq N} a(J )+ \sup_{J\leq N} b(J )\big]
 \leq \sup_{J\leq N} [a(J)+b(J)]$.
Therefore, using this upper estimate and then 
taking expected values in the above inequality, using the Cauchy-Schwarz
and   H\"older inequalities with  conjugate exponents $p,q\in (1,\infty)$, we deduce 
 \begin{align}		\label{5.10}
 \frac{1}{8}& \EE\Big(\max_{J\leq N} \|e_J\|_{\LL^2}^2 \Big)+ \frac{1}{4} \sum_{j=1}^N \EE(\|e_j-e_{j-1}\|_{\LL^2}^2) 
 + \frac{\nu}{4} \, h \sum_{j=1}^N \EE( \|\nabla e_j\|_{\LL^2}^2 )
 \leq C_1\, h  \sum_{j=0}^{N-1} \EE( \|e_j\|_{\LL^2}^2)  	\nonumber \\
&\quad + C_2 \Big\{ 1+ \EE\Big( \sup_{s\in [0,T]}  \|u(s)\|_V^{16\alpha}\Big) \Big\}^{\frac{1}{2}} 
\Big\{ N\, h \, \sum_{j=1}^N \EE \int_{t_{j-1}}^{t_j} \|u(s)-u(t_j)\|_{\LL^2}^4 ds \Big\}^{\frac{1}{2}} 	\nonumber \\
&\quad + C_3 \EE\Big( \sum_{j=1}^N \int_{t_{j-1}}^{t_j} \|\nabla [u(s)-u(t_j)\|_{\LL^2}^2 ds \Big)
  +\EE\Big( \sum_{k=1}^N |T_{j,6}| \Big) + \EE\Big(  \max_{K\leq N} \sum_{j=1}^K  T_{j,7}\Big) .
 \end{align}
We next find upper estimates of the expected value of the sum of the stochastic terms $T_{j,l},  l=6,7$. 

For $j\in \{1, ..., N\}$, the Cauchy-Schwarz and Young inequalities,  the Lipschitz condition \eqref{LipG}, the Cauchy-Schwarz and Young inequalities
 imply for $\epsilon_6>0$ 
\begin{align}		\label{5.11}
\EE\big|&T_{j,6}\big| \leq  \EE\Big( \Big\| \int_{t_{j-1}}^{t_j}\!\! \big[ G(u(s))-G(u^{j-1})\big] dW(s)\Big\|_{\LL^2} \, \|e_j-e_{j-1}\|_{\LL^2}\Big) 
\nonumber \\
&\leq  \epsilon_6  \, \EE\big( \|e_j-e_{j-1}\|_{\LL^2}^2\big) + \frac{2}{4\epsilon_6} \EE\int_{t_{j-1}}^{t_j}\!\! \big[ L \|u(s)-u(t_{j-1})\|_{\LL^2}^2
+ L \|e_{j-1}\|_{\LL^2}^2 \big]\,  \mbox{\rm Tr} Q\, ds
\nonumber \\
&\leq  \epsilon_6 \, \EE\big( \|e_j-e_{j-1}\|_{\LL^2}^2\big) + h\,  \frac{L\, \mbox{\rm Tr } Q}{2\epsilon_6} \EE(\|e_{j-1}\|_{\LL^2}^2) +
\frac{L\, \mbox{\rm Tr} Q}{2\epsilon_6} \EE\int_{t_{j-1}}^{t_j}\!\!  \|u(s)-u(t_{j-1})\|_{\LL^2}^2\,  ds.
\end{align}
Using the Davis inequality and the Lipschitz condition \eqref{LipG}, we deduce that for  $\epsilon_7>0$
\begin{align}	\label{5.12}
\EE\Big( \max_{K\leq N} &\sum_{j=1}^N T_{j,7}\Big)  \leq 3 \EE\Big( \Big\{ \sum_{j=1}^J \int_{t_{j-1}}^{t_j} 
 \| G\big(u(s)\big)- G\big(u^{j-1}  \big)\|_{\mathcal L}^2 
\; \|e_{j-1}\|_{\LL^2}^2 \, \mbox{\rm Tr} Q\, ds \Big\}^{\frac{1}{2}} \Big)		\nonumber \\
\leq & \; 3 \; \EE\Big( \max_{0\leq j\leq N-1} \|e_j\|_{\LL^2} \Big\{  \sum_{j=1}^N \int_{t_{j-1}}^{t_j} \| G\big(u(s)\big)-
 G\big(u^{j-1})  \big)\|_{\mathcal L}^2 \, \mbox{\rm Tr} Q
\;  ds \Big\}^{\frac{1}{2}} \Big)		\nonumber \\
\leq &\;  \epsilon_7 \; \EE\Big( \max_{1\leq j\leq N} \|e_j\|_{\LL^2}^2\Big) 
+ \frac{18\, L\, \mbox{\rm Tr} Q}{4\epsilon_7} \EE\Big( \sum_{j=1}^N \int_{t_{j-1}}^{t_j}\!\!  \big[  \|u(s)-u(t_{j-1})\|_{\LL^2}^2 
+ \|e_{j-1}\|_{\LL^2}^2\big] \, ds\Big),
\end{align}
where in the last inequality we have used  $e_0=0$  and Young's inequality. 

Choose  $\epsilon_6  = \frac{1}{4}$ and $\epsilon_7 = \frac{1}{16}$; the upper estimates \eqref{5.10} -- \eqref{5.12} imply
\begin{align*}
 \frac{1}{16} \EE\Big( & \max_{J\leq N} \|e_J\|_{\LL^2}^2 \Big)  
 + \frac{\nu}{4} \, h\, \sum_{j=1}^N \EE( \|\nabla e_j\|_{\LL^2}^2 )  
\leq  \big( C_1+74\, L\, {\rm Tr}Q) \, h\, \sum_{j=0}^{N-1}  \EE(\|e_j\|_{\LL^2}^2) \\
&\quad +   C_2 T  \Big\{1+  \EE\Big(   \sup_{s\in [0,T]}  \|u(s)\|_V^{16\alpha}\Big) \Big\}^{\frac{1}{2}} 
\Big\{  \sum_{j=1}^N \EE \int_{t_{j-1}}^{t_j} \|u(s)-u(t_j)\|_{\LL^2}^4 ds \Big\}^{\frac{1}{2}}\\
&\quad + C(T,L,\mbox{\rm Tr } Q)  
\sum_{j=1}^N \int_{t_{j-1}}^{t_j} \!\! \EE\big(\|u(s)-u(t_{j-1})\|_{\LL^2}^2 \big)ds\\
&\quad + C_3 \EE\Big( \sum_{j=1}^N \int_{t_{j-1}}^{t_j}\!\! \|\nabla [u(s)-u(t_j)]\|_{\LL^2}^2 ds \Big)  .
\end{align*} 
Let $\lambda \in (0,1)$ and set $\delta = \frac{1}{4} (1-\lambda)$. 
The moment estimates \eqref{3.9} and \eqref{3.10} imply 
\begin{align}	\label{5.13}
 \frac{1}{16} \EE\Big(&\max_{j\leq N} \|e_j\|_{\LL^2}^2 \Big) 
 + \frac{\nu}{4} \, h\, \sum_{j=1}^N \EE( \|\nabla e_j\|_{\LL^2}^2 ) 
\leq  \big( C_1+74\, L\, {\rm Tr} Q) \, h\, \sum_{j=0}^{N-1}  \EE(\|e_j\|_{\LL^2}^2) 	\nonumber \\
&\quad + C(T) \big\{ 1+\EE(\|u_0\|_V^{16\alpha})\big\}^{\frac{1}{2}} \, h^\lambda + C \Big[ 1+\EE\Big(\|u_0\|_V^{\frac{16\alpha +2+8\delta }{1+4\delta}}\Big)\Big]\, h^\lambda	
\end{align}
for some constant $C:=C(T, \nu, \alpha, a,p,{\rm Tr}Q)$. 
Note that for $\delta\in \big(0, \frac{1}{32\alpha -4}\big)$ we have $\frac{16\alpha+2+8\delta}{1+4\delta} \geq 16\alpha$. 
 Neglecting  the second  term in the left hand side of \eqref{5.13} and using the discrete Gronwall lemma, we deduce that, for some positive
  constants $C$ (resp. $C_1$) depending on $T,\nu, \alpha, a, \mbox{\rm Tr }Q$  and
  $ \EE\Big(\|u_0\|_V^{\frac{16\alpha +2+8\delta }{1+4\delta}}\Big) $  (resp.  depending on $\nu, \alpha, a, \kappa$)  such that 
\[ \EE\Big( \max_{j\leq N} \|e_j\|_{\LL^2}^2 \Big) \leq C \, h^\lambda\, e^{16(C_1+74\, L\, {\rm Tr}Q)T}.\]
Plugging this inequality in \eqref{5.13} we deduce\eqref{strong-full->1}; this completes the proof when $\alpha\in (1,\frac{3}{2}]$.
\medskip

 (ii) We next let $\alpha=1$ and assume $4\nu a >1$ and $4\nu a \kappa >1$; we  only point out the differences in the proof. 

We have to use a different argument to obtain upper estimates of the terms $\{ T_{j,2,i},i=1,2,3\}$ and $T_{j,3}$. 
The Cauchy-Schwarz and Young inequalities prove that for $\epsilon_2, \bar{\epsilon}_2, \tilde{\epsilon}_2>0$, 
\begin{align*} 	
 |T_{j,2,1}| &\leq \epsilon_2 \nu \, h\, \|\nabla e_j\|_{\LL^2}^2 + \frac{1}{4\epsilon_2 \nu} \, h\, \| |u(t_j)| e_j\|_{\LL^2}^2,\\
 |T_{j,2,2}|&\leq \bar{\epsilon}_2 \, h\, \||u(t_j)| e_j\|_{\LL^2}^2 + \frac{1}{4\bar{\epsilon}_2} \int_{t_{j-1}}^{t_j} \| \nabla [ u(s)-u(t_j)] \|_{\LL^2}^2 ds , \\
 |T_{j,2,3}|&\leq \tilde{\epsilon}_2 \, h\, \||u^j | e_j\|_{\LL^2}^2 + \frac{1}{4\tilde{\epsilon}_2} \int_{t_{j-1}}^{t_j} \| \nabla [ u(s)-u(t_j)] \|_{\LL^2}^2 ds .
 \end{align*}  
This implies 
\begin{align}		\label{Tj2Bis}
|T_{j,2}| \leq &\;  \epsilon_2 \nu \, h\, \|\nabla e_j\|_{\LL^2}^2 +\Big(  \frac{1}{4\epsilon_2 \nu}+ \bar{\epsilon_2}\Big)  \, h\, \| |u(t_j)| e_j\|_{\LL^2}^2
+ \tilde{\epsilon}_2 \, h\, \||u^j | e_j\|_{\LL^2}^2  \nonumber \\
&+ \Big( \frac{1}{4\bar{\epsilon}_2}+ \frac{1}{4\tilde{\epsilon}_2}\Big)  \int_{t_{j-1}}^{t_j} \| \nabla [ u(s)-u(t_j)] \|_{\LL^2}^2 ds. 
\end{align} 
Using once more the Cauchy-Schwarz and Young inequalities, we obtain for $\epsilon_3>0$ 
\begin{equation}		\label{Tj3Bis}
|T_{j,3}|\leq \epsilon_3\, h\, \||u^j|e_j\|_{\LL^2}^2 + \frac{1}{4\epsilon_3}   \int_{t_{j-1}}^{t_j} \| \nabla [ u(s)-u(t_j)] \|_{\LL^2}^2 ds.
\end{equation}
The upper estimates  \eqref{5.3},  \eqref{Tj2Bis}, \eqref{Tj3Bis}, \eqref{5.7} and   \eqref{5.8} 
imply 
 for any positive numbers $\epsilon_j, j=1, ...,5$, $\bar{\epsilon}_2$ and $\tilde{\epsilon}_2$
\begin{align}	\label{**}
\frac{1}{2}& \|e_J\|_{\LL^2}^2 + \frac{1}{2} \sum_{j=1}^J \|e_j-e_{j-1}\|_{\LL^2}^2 + \nu\, h\, \sum_{j=1}^J \|\nabla e_j\|_{\LL^2}^2 +
a \kappa \, h \, \sum_{j=1}^J \Big[ \| |u(t_j)| e_j\|_{\LL^2}^2 + \| |u^j| e_j\|_{\LL^2}^2\Big] \nonumber  \\
&\leq \sum_{j=1}^J \sum_{l=6}^7 T_{j,l} +  \Big[ \epsilon_1\nu +  \epsilon_5 \nu\Big] \, h\, \sum_{j=1}^J \|e_j\|_{\LL^2}^2  
+ \Big( \epsilon_1+\epsilon_2 + \epsilon_4+ \epsilon_5\Big) \nu \, h\, \sum_{j=1}^J \|\nabla e_j\|_{\LL^2}^2 \nonumber \\
& \quad 
+ \Big( \frac{1}{4\epsilon_2 \nu} + \bar{\epsilon}_2\Big)  \, h\, \sum_{j=1}^J \| |u(t_j)| e_j\|_{\LL^2}^2 
+  \big( \tilde{\epsilon_2} + \epsilon_3\big)  \, h\, \sum_{j=1}^J \| |u^j| e_j\|_{\LL^2}^2
\nonumber \\
&\quad 
+  \frac{(C_6C_3)^4}{64 \epsilon_1^2 \nu^2}  \sup_{s\in [0,T]} \|u(s)\|_V^8  
\sum_{j=1}^J \int_{t_{j-1}}^{t_j} \|u(s)-u(t_j)\|_{\LL^2}^2 ds \nonumber  \\
&\quad + \Big[ 1   
+ \frac{1}{4\bar{\epsilon}_2} + \frac{1}{4\tilde{\epsilon}_2} +\frac{1}{4\epsilon_3} 
+ \frac{\nu}{4\epsilon_4}+ \frac{C(\alpha)^2}{8\epsilon_5 \nu}  \Big] \sum_{j=1}^J \int_{t_{j-1}}^{t_j}
 \| \nabla [u(s)-u(t_j)] \|_{\LL^2}^2 ds.  
\end{align} 
Fix $\epsilon \in (0,\frac{1}{2})$ such that $(1-2\epsilon)^2 4\nu a \kappa >1$,  let $\epsilon_2= 1-2\epsilon$,
 and then choose positive numbers $\epsilon_1, \epsilon_4$ and $\epsilon_5$ 
such that $\epsilon_1+\epsilon_2+\epsilon_4+\epsilon_5 = 1-\epsilon$.  Choose $\bar{\epsilon}_2\in (0, \epsilon a\kappa)$,
$\tilde{\epsilon}_2 + \epsilon_3 \leq  a\kappa$. 
The choice of $\epsilon_2$ and $\bar{\epsilon}_2$ implies   $\frac{1}{4\epsilon_2 \nu}+\bar{\epsilon}_2 < a\kappa$.  Therefore, 
\begin{align*}	
\frac{1}{2}& \|e_J\|_{\LL^2}^2 + \frac{1}{2} \sum_{j=1}^J \|e_j-e_{j-1}\|_{\LL^2}^2 + \epsilon \nu  \, h \sum_{j=1}^J \|\nabla e_j\|_{\LL^2}^2 
\leq  C_1 \, h\, \sum_{j=1}^J \|e_j\|_{\LL^2}^2 + \sum_{j=1}^J \sum_{l=6}^7 T_{j,l} \\
& \quad + C_2  \sup_{s\in [0,T]} \|u(s)\|_V^8  \sum_{j=1}^J \int_{t_{j-1}}^{t_j}  \|u(s)-u(t_j)\|_{\LL^2}^2 ds 
\sum_{j=1}^J\int_{t_{j-1}}^{t_j} \!  \| \nabla [u(s)-u(t_j)\|_{\LL^2}^2 ds 
\end{align*}
As in the case $\alpha \in (1,\frac{3}{2}]$, using \eqref{5.11} and \eqref{5.12} with  $\epsilon_6  = \frac{1}{4}$ and $\epsilon_7 = \frac{1}{16}$, 
we deduce 
\begin{align*}
& \frac{1}{16} \EE\Big(\sup_{J\leq N} \|e_J\|_{\LL^2}^2 \Big) 
 + \frac{\epsilon \nu}{4} \, h\, \sum_{j=1}^N \EE( \|\nabla e_j\|_{\LL^2}^2 ) 
\leq  \big( C_1+74\, L\, {\rm Tr}Q)  \, h\, \sum_{j=0}^{N-1}  \EE(\|e_j\|_{\LL^2}^2) \\
&\quad +   C_2 T  \Big\{1+  \EE\Big( \sup_{t\in [0,T]} \|u(s)\|_V^{16} \Big) \Big\}^{\frac{1}{2}} 
\Big\{  \sum_{j=1}^N \EE \int_{t_{j-1}}^{t_j} \|u(s)-u(t_j)\|_{\LL^2}^4 ds \Big\}^{\frac{1}{2}}\\
&\quad + C_3 \EE\Big( \sum_{j=1}^N \int_{t_{j-1}}^{t_j}\!\! \|\nabla [u(s)-u(t_j)]\|_{\LL^2}^2 ds \Big) . 
\end{align*} 
We conclude the proof as in the case $\alpha \in (1,\frac{3}{2}]$.  
\hfill $\Box$

\section{Appendix}		\label{Appendix} 
In this section, we provide the proof of the well-posedness result stated in section 
\ref{s-gwp}. 
\subsection{Proofs of preliminary estimates}	\label{Ap1}
The following results gather some estimates of the bilinear term, and more generally of the  non linear part in \eqref{3D-NS}. 
They are deduced from the Brinkman Forchheimer smoothing term. 
The proofs are somewhat similar to the corresponding ones in \cite{BeMi_anisotropic} in a different functional setting.

The next lemma gathers further properties of $B$. 
\begin{lemma}		\label{upperB} 
Suppose that  $\alpha \in [1,+\infty)$. 

(i) Let $u\in L^\infty(0,T;H)\cap L^{2\alpha+2}([0,T]\times D;\RR^3)$, $v\in X_0$. Then
\begin{align}
&\int_0^T \! \big| \langle B(u(t), u(t)) , v(t)\rangle \big| dt \leq  \| \nabla v\|_{L^2(0,T;\LL^2)} \; 
\underset{{t\in [0,T]}}{\mbox{\rm ess sup }} \|u(t)\|_{\LL^2}^{\frac{\alpha-1}{\alpha}}\;
\|u\|_{L^{2\alpha +2}([0,T]\times D;\RR^3)}^{\frac{\alpha +1}{\alpha}} \; T^{\frac{\alpha -1}{2\alpha}}. \label{1.11}\\
&\int_0^T\!  \big| \langle B(u(t), u(t))  - B(v(t),v(t)) , u(t)- v(t)\rangle \big| dt \leq 
\| \nabla v\|_{L^2(0,T;H)} \nonumber \\
&\qquad \qquad \qquad \qquad \times \; \underset{{t\in [0,T]}}{\mbox{\rm ess sup }} \|(u-v)(t)\|_{H}^{\frac{\alpha-1}{\alpha}}\;
\|u-v\|_{L^{2\alpha +2}([0,T]\times D;\RR^3)}^{\frac{\alpha +1}{\alpha}} \; T^{\frac{\alpha -1}{2\alpha}}. \label{1.12}
\end{align}
(ii) Let $u\in L^4(\Omega ; L^\infty(0,T;H))\cap L^{2\alpha +2}(\Omega_T\times D;\RR^3)$ and $v\in {\mathcal X}_0$. Then
\begin{align}
&\EE \int_0^T \! \big| \langle B(u(t), u(t)) , v(t)\rangle \big| dt \leq \Big\{ \EE \Big| \int_0^T \|\nabla v(t)\|_{\LL^2}^2 dt  \Big|^2  \Big\}^{\frac{1}{4}}  \; 
\Big\{ \EE\Big( \underset{{t\in [0,T]}}{\mbox{\rm ess sup }} \|u(t)\|_{H}^{4}\Big) \Big\}^{ \frac{\alpha-1}{4\alpha}} \nonumber \\
& \qquad \qquad \qquad \qquad \times 
\Big\{ \EE \int_0^T \!\! dt \int_{D} \! |u(t,x)|^{2\alpha +2} dx \Big\}^{\frac{1}{2\alpha}} \;  T^{\frac{\alpha -1}{2\alpha}}, 
\label{1.13} \\
&\EE \int_0^T\!  \big| \langle B(u(t), u(t))  - B(v(t),v(t)) , u(t)- v(t)\rangle \big| dt \leq T^{\frac{\alpha -1}{2\alpha}} \, 
 \Big\{ \EE \Big| \int_0^T \! \|\nabla v(t)\|_{\LL^2}^2 dt  \Big|^2  \Big\}^{\frac{1}{4}} , 
 \nonumber \\
& \qquad \times \Big\{ \EE\Big( \underset{{t\in [0,T]}}{\mbox{\rm ess sup }} \|(u-v)(t)\|_{H}^{4}\Big) \Big\}^{\frac{\alpha -1}{4 \alpha}}   
\; \Big\{ \EE \! \int_0^T \!\! dt\! \int_{D} \!  |(u-v)(t,x)|^{2\alpha +2} dx \Big\}^{\frac{1}{2\alpha}} .  \label{1.14}
\end{align}
\end{lemma}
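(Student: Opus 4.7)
The plan is to reduce everything to a single pointwise trilinear estimate and then apply Hölder in time and (for (ii)) in $\omega$, exploiting the antisymmetry \eqref{B} and the interpolation of $\LL^4$ between $\LL^2$ and $\LL^{2\alpha+2}$.

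First I would rewrite the trilinear form so that the gradient sits on $v$ instead of on $u$. Using the antisymmetry \eqref{B},
\[
\langle B(u,u),v\rangle = b(u,u,v)=-b(u,v,u)=-\sum_{i,j}\int_D u_i\,(\partial_i v_j)\,u_j\,dx,
\]
so pointwise in $t$ one has $|\langle B(u,u),v\rangle|\le \int_D|u|^2|\nabla v|\,dx\le \|\nabla v\|_{\LL^2}\|u\|_{\LL^4}^{2}$ by Cauchy--Schwarz. For \eqref{1.12}, setting $w=u-v$ and expanding $u=v+w$, the two vanishing terms $b(v,w,w)=b(w,w,w)=0$ from \eqref{B} give the cancellation
\[
\langle B(u,u)-B(v,v),w\rangle = b(w,v,w),
\]
so the same pointwise bound applies with $u$ replaced by $w$ on the two outer slots. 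This is the only place where the antisymmetry is used; everything else is Hölder and interpolation.

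Next I would interpolate $\LL^4$ between $\LL^2$ and $\LL^{2\alpha+2}$. Since $\alpha\ge 1$ gives $4\in[2,2\alpha+2]$, writing $\tfrac{1}{4}=\tfrac{\theta}{2}+\tfrac{1-\theta}{2\alpha+2}$ yields $\theta=\tfrac{\alpha-1}{2\alpha}$, hence
\[
\|u\|_{\LL^4}^{2}\le \|u\|_{\LL^2}^{\frac{\alpha-1}{\alpha}}\|u\|_{\LL^{2\alpha+2}}^{\frac{\alpha+1}{\alpha}}.
\]
(For $\alpha=1$ this degenerates to $\|u\|_{\LL^4}^2\le \|u\|_{\LL^4}^2$, which is trivially correct, with the first factor equal to $1$.) Pulling out the essential supremum of $\|u(t)\|_H^{(\alpha-1)/\alpha}$ reduces \eqref{1.11} to bounding
\[
\int_0^T \|\nabla v(t)\|_{\LL^2}\, \|u(t)\|_{\LL^{2\alpha+2}(D)}^{\frac{\alpha+1}{\alpha}}\,dt.
\]
I would then apply Hölder in $t$ with the three exponents $2$, $2\alpha$, and $\tfrac{2\alpha}{\alpha-1}$; since $\tfrac12+\tfrac{1}{2\alpha}+\tfrac{\alpha-1}{2\alpha}=1$, this gives $\|\nabla v\|_{L^2(0,T;\LL^2)}$, the factor $T^{(\alpha-1)/(2\alpha)}$ from the constant $1$, and $\bigl(\int_0^T\!\!\|u\|_{\LL^{2\alpha+2}}^{2\alpha+2}dt\bigr)^{1/(2\alpha)}=\|u\|_{L^{2\alpha+2}([0,T]\times D)}^{(\alpha+1)/\alpha}$, proving \eqref{1.11}. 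Inequality \eqref{1.12} follows identically with $u$ replaced by $w=u-v$ in every slot except the one carrying $\nabla v$.

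For part (ii) I would take expectations of the pointwise bound already derived and apply Hölder in $\omega$ to the product
\[
\|\nabla v\|_{L^2(0,T;\LL^2)}\ \cdot\ \bigl(\esssup_{t}\|u(t)\|_H\bigr)^{\frac{\alpha-1}{\alpha}}\ \cdot\ \|u\|_{L^{2\alpha+2}([0,T]\times D)}^{\frac{\alpha+1}{\alpha}},
\]
with exponents $4$, $\tfrac{4\alpha}{\alpha-1}$, and $2\alpha$. The first factor yields $\{\EE(\int_0^T\|\nabla v\|_{\LL^2}^2dt)^2\}^{1/4}$, the second yields $\{\EE\, \esssup_t\|u(t)\|_H^4\}^{(\alpha-1)/(4\alpha)}$, and the third yields $\{\EE\int_0^T\!\!\int_D|u|^{2\alpha+2}dx\,dt\}^{1/(2\alpha)}$, as required in \eqref{1.13}. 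The reciprocals satisfy $\tfrac14+\tfrac{\alpha-1}{4\alpha}+\tfrac{1}{2\alpha}=\tfrac{2\alpha+1}{4\alpha}\le 1$ for $\alpha\ge 1$, so Hölder is legitimate with an extra factor of $1$. Inequality \eqref{1.14} is the same argument applied to $b(w,v,w)$. The only mild subtlety is the case $\alpha=1$, where the exponent $\tfrac{4\alpha}{\alpha-1}$ becomes infinite; but then $\tfrac{\alpha-1}{\alpha}=0$, the $\esssup$ factor is simply $1$, and one uses only the two-term Hölder $\tfrac14+\tfrac12+\tfrac14=1$ to get the classical $L^4$ bound. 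No step beyond antisymmetry, interpolation and Hölder is needed, so there is no real obstacle; the only bookkeeping to be careful with is matching the exponents.
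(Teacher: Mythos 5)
Your proof is correct and follows essentially the same route as the paper: the same antisymmetry reduction $\langle B(u,u)-B(v,v),u-v\rangle=b(u-v,v,u-v)$, the same pointwise trilinear bound (your $\LL^4$-interpolation between $\LL^2$ and $\LL^{2\alpha+2}$ with $\theta=\frac{\alpha-1}{2\alpha}$ is exactly the paper's three-factor H\"older estimate \eqref{fgh1} with $f=g=u$), and the same H\"older bookkeeping in $t$, with the $\alpha=1$ case degenerating to the classical $\LL^4$ bound in both arguments. The only cosmetic difference is in part (ii), where you apply generalized H\"older in $\omega$ with exponents $4$, $\frac{4\alpha}{\alpha-1}$, $2\alpha$ whose reciprocals sum to $\frac{2\alpha+1}{4\alpha}\le 1$ (legitimate on a probability space, as you note), whereas the paper uses the exact exponents $4$, $\frac{4\alpha}{3\alpha-2}$, $2\alpha$ and then invokes $\frac{4(\alpha-1)}{3\alpha-2}<4$ together with Jensen's inequality to pass to the fourth moment of $\esssup_{t}\|u(t)\|_H$ --- the two devices are equivalent and yield the identical bounds \eqref{1.13}--\eqref{1.14}.
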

\begin{proof} 
(i) Suppose $\alpha >1$. Using \eqref{fgh1} with $h=\partial_i v_j$, $f=u_i$ and $g=u_j$, we deduce
\begin{align*}
|\langle B(u,u),v\rangle | &=  |-\langle B(u,v),u\rangle | \leq \sum_{i,j=1}^3 \int_{D} |u_i(x) \partial_i v_j(x) u_j(x)| dx \\
&\leq \big\| |u| \, |u|^{\frac{1}{\alpha}}\big\|_{\LL^{2\alpha}} \; \big\| |u|^{1-\frac{1}{\alpha}} \big\|_{\LL^{\frac{2\alpha}{\alpha-1}}}\; \| \nabla v\|_{\LL^2}.
\end{align*}
Integrating on the time interval $[0,T]$ and using the Cauchy-Schwarz inequality, we obtain
\[ \int_0^T \! \big| \langle B(u(t), u(t)) , v(t)\rangle \big| dt \leq \underset{{t\in [0,T]}}{\mbox{\rm ess sup }} \|u(t)\|_{H}^{\frac{\alpha-1}{\alpha}}\; 
\Big( \int_0^T\!\!  \|u(t)\|_{\LL^{2\alpha +2}}^{\frac{2\alpha +2}{\alpha}} dt \Big)^{\frac{1}{2}} 
\Big( \int_0^T \!\! \|\nabla v(t)\|_{\LL^2}^2 dt \Big)^{\frac{1}{2}}.
\]
H\"older's inequality implies 
\[ \int_0^T\!\!  \|u(t)\|_{\LL^{2\alpha +2}}^{\frac{2\alpha +2}{\alpha}} dt \leq 
\|u\|_{L^{2\alpha+2}([0,T]\times D;\RR^3)}^{\frac{2\alpha+2}{\alpha}}\; T^{\frac{\alpha-1}{\alpha}}.\]
 This completes the proof of \eqref{1.11} for $\alpha >1$.

 If $\alpha=1$, since $ |\langle B(u,u),v\rangle |  \leq \big\|  u\big\|_{\LL^4}^2 \; \| \nabla v\|_{\LL^2}$, 
 a straightforward computation implies \eqref{1.11}.

Since $\langle B(u,u)-B(v,v) \, , \, u-v\rangle = \langle B(u-v,v)\, , \, u-v\rangle$, using the antisymmetry \eqref{B} it is easy to see that 
 the upper estimate \eqref{1.11} implies \eqref{1.12}.
 \smallskip

(ii) For $\alpha >1>\frac{2}{3}$, we have $\frac{4\alpha}{3\alpha -2}>1$. 
Using  H\"older's inequality  for the expected value with exponents  4, 
$\frac{4\alpha}{3\alpha -2}$ and $2\alpha$ in \eqref{1.11}, we deduce 
\begin{align*}
 \EE \int_0^T \! \big| \langle B(u(t), u(t)) , v(t)\rangle \big| dt \leq &
\Big\{ \EE  \Big( \|\nabla v\|_{L^2(0,T;\LL^2)}^4 \Big)\Big\} ^{\frac{1}{4 }}
\Big\{  \EE\Big(  \underset{{t\in [0,T]}}{\mbox{\rm ess sup }} \|u(t)\|_{\LL^2}^{\frac{4(\alpha-1)}{3\alpha-2}} \Big)
 \Big\}^{\frac{3\alpha-2}{4\alpha} } \\ 
 &\times \Big\{ E  \int_0^T dt \int_{D}  |u(t,x)|^{2\alpha +2} \, dx \Big\}^{\frac{1}{2\alpha}}\; T^{\frac{\alpha-1}{2\alpha}} .
\end{align*}
Since $\alpha >\frac{1}{2}$ we have $\frac{4(\alpha -1)}{3\alpha -2} < 4$; 
this completes the proof of \eqref{1.13} for $\alpha >1$.

 For $\alpha=1$, using the antisymmetry \eqref{B}, and twice the Cauchy-Schwarz inequality, we deduce 
\begin{align*}
\EE \int_0^T \! \big| \langle B(u(t), u(t)) &, v(t)\rangle \big| dt \leq 
 \Big\{ \EE \int_0^T  \|\nabla v(t)\|_{\LL^2}^2dt   \Big\}^{\frac{1}{2}} 
\Big\{ \EE \int_0^T  \|u(t)\|_{\LL^4}^4dt  \Big\}^{\frac{1}{2}} \\
&\leq \Big\{ \EE \Big| \int_0^T  \|\nabla v(t)\|_{\LL^2}^2 dt \Big|^2  \Big\}^{\frac{1}{4}} 
\Big\{ \EE \int_0^T  \|u(t)\|_{\LL^4}^4dt  \Big\}^{\frac{1}{2}}.
\end{align*}
This completes the proof of \eqref{1.13}.

A similar argument based on the identity $\langle B(u,u)-B(v,v) \, , \, u-v\rangle = \langle B(u-v,v)\, , \, u-v\rangle$ shows   \eqref{1.14}. 
\end{proof} 
\medskip

We next prove upper estimates for the gradient of the bilinear term. 
\begin{lemma} 		\label{upper_nablaB}
(i) There exists a positive constant $C$ such that for $\alpha \in  (1,\infty)$, some  constant $C_\alpha >0$, any constants 
$\varepsilon_0, \varepsilon_1 >0$ we have for $u\in X_1$,
\begin{equation} 		\label{1.18}
|\langle  A^{1/2}  B(u,u)\, , \, A^{1/2}  u\rangle | \leq C \Big[ \varepsilon_0 \|Au\|_{\LL^2}^2 +
 \frac{\varepsilon_1}{4\varepsilon_0} \big\| |u|^\alpha \nabla u \big\|_{\LL^2}^2 + \frac{C_\alpha}{\varepsilon_0 \varepsilon_1^{\frac{1}{\alpha-1}}}
 \|\nabla u\|_{\LL^2}^2 \Big].
\end{equation}
(ii) Let $\alpha =1$; for every $\epsilon >0$,  we have for some constant $C>0$ and any $u\in X_1$
\begin{equation} 		\label{1.18Bis}
|\langle  A^{1/2} B(u,u)\, , \,A^{1/2} u \rangle | \leq   \varepsilon \|Au\|_{\LL^2}^2 +
 \frac{1}{4\varepsilon} \big\| |u| \nabla u \big\|_{\LL^2}^2 .
 \end{equation}
\end{lemma}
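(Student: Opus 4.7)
The strategy is to rewrite $\langle A^{1/2} B(u,u), A^{1/2} u\rangle$ as the $\LL^2$ pairing $(B(u,u), Au)$, bound it pointwise by the triple integral $\int_D |u||\nabla u||Au|\, dx$, and then invoke the elementary inequality \eqref{fgh2} already proved in Section \ref{preliminary}.

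First I would justify the integration-by-parts identity. Since $u\in X_1$, for a.e.\ $t$ we have $u(t)\in\mbox{\rm Dom}(A)\subset\HH^2$; in dimension three this yields $u(t)\in\LL^\infty$ and $\nabla u(t)\in\LL^4\cap\LL^6$. A short direct computation then shows $B(u,u)\in V$: namely $(u\cdot\nabla)u\in\LL^2$ because $u\in\LL^\infty$ and $\nabla u\in\LL^2$, while its weak gradient is pointwise dominated by $|\nabla u|^2+|u|\,|D^2u|$, which belongs to $\LL^2$ by the preceding embeddings. Because $Au\in H$ and $B(u,u)=\Pi\big((u\cdot\nabla)u\big)$, the Leray projection disappears in the inner product with $Au$, and the self-adjointness of $A^{1/2}$ on $\mbox{\rm Dom}(A^{1/2})=V$ gives
\[
\langle A^{1/2} B(u,u), A^{1/2} u\rangle \;=\; (B(u,u),Au) \;=\; \big((u\cdot\nabla)u\,,\,Au\big).
\]

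Next, two applications of the Cauchy--Schwarz inequality on the coordinate sums produce the pointwise bound
\[
\big|(u\cdot\nabla)u(x)\cdot Au(x)\big| \;\leq\; |u(x)|\,|\nabla u(x)|\,|Au(x)|,\qquad x\in D.
\]
For part (i), with $\alpha\in(1,\infty)$, I would integrate over $D$ and apply \eqref{fgh2} with the scalar choices $f=|u|$, $g=|\nabla u|$, $h=|Au|$; this immediately produces the three terms on the right-hand side of \eqref{1.18}, the overall constant $C$ absorbing the universal factor from the triple-product bound. For part (ii), when $\alpha=1$ the coefficient $\varepsilon_1^{1/(\alpha-1)}$ in \eqref{fgh2} becomes singular, so instead I would apply Young's inequality $ab\leq\varepsilon a^2+\frac{1}{4\varepsilon}b^2$ directly to $a=|Au|$ and $b=|u|\,|\nabla u|$, then integrate, which yields \eqref{1.18Bis} with no extra multiplicative constant.

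The only mildly technical point is the verification that $B(u,u)\in V$, needed so that the rewriting $\langle A^{1/2}B(u,u),A^{1/2}u\rangle=(B(u,u),Au)$ is legitimate; this is the main (though minor) obstacle and is handled entirely by the Sobolev embeddings recalled above. Everything else reduces to the already-established algebraic inequality \eqref{fgh2} and a pointwise Cauchy--Schwarz estimate.
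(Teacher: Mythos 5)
Your proof is correct, and for part (i) it takes a genuinely more direct route than the paper. The paper expands $\langle A^{1/2}B(u,u),A^{1/2}u\rangle=\sum_{i,j,k}\int_D\partial_k\big[u_i\,\partial_i u_j\big]\partial_k u_j\,dx$, kills the term $\sum_{i,j,k}\int_D u_i\,\partial_i\partial_k u_j\,\partial_k u_j\,dx$ by the antisymmetry \eqref{B} applied to $\partial_k u$, integrates the remaining triple-gradient term by parts and uses $\mathrm{div}\, u=0$ to cancel one more piece, arriving at $-\sum_{i,j,k}\int_D \partial_k u_i\,u_j\,\partial_i\partial_k u_j\,dx$, to which \eqref{fgh2} is applied componentwise with $f=u_j$, $g=\partial_k u_i$, $h=\partial_i\partial_k u_j$; this implicitly identifies $\sum_{i,k}\|\partial_i\partial_k u_j\|_{L^2}^2$ with $\|Au\|_{\LL^2}^2$, valid in the periodic setting. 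You instead reduce at once to $(B(u,u),Au)$ via self-adjointness of $A^{1/2}$ — and you correctly isolate the one point that needs justification, namely $B(u,u)\in V=\mathrm{Dom}(A^{1/2})$ for $u\in\mathrm{Dom}(A)$, which your Sobolev-embedding argument ($u\in\LL^\infty$, $\nabla u\in\LL^4$, boundedness of $\Pi$ on $\HH^1$ on the torus) settles — then use the pointwise Cauchy--Schwarz bound $|u|\,|\nabla u|\,|Au|$ and a single scalar application of \eqref{fgh2}. The two routes produce identical terms, yours with constant $C=1$ and with $\||u|^\alpha\nabla u\|_{\LL^2}^2$ appearing directly rather than via $|u_j|\leq|u|$; your route also unifies the two parts, since for $\alpha=1$ the paper's own proof is exactly your reduction (it rewrites the pairing as $-\sum_{i,j}\int_D u_i\,\partial_i u_j\,\Delta u_j\,dx$ and applies Cauchy--Schwarz and Young, just as you do). What the paper's coordinate computation buys is making the antisymmetry and incompressibility cancellations explicit, but these cancellations are not actually exploited in the final estimate: once one accepts an $\varepsilon_0\|Au\|_{\LL^2}^2$ term on the right-hand side, your direct argument suffices and avoids both the double integration by parts and the periodic identity for second derivatives.
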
 
 \begin{proof}
 (i) Let $\alpha >1$ and  $u\in X_1$. Then 
\[ \langle A^{1/2} B(u,u)\, , \,A^{1/2} u \rangle = \sum_{i,j,k=1}^3 \int_{D} \partial_k\big[ u_i \, \partial_i u_j \big]\,  \partial_k u_j dx = T_1+T_2,\]
where, using the antisymmetry property \eqref{B},  we get 
\begin{align*}
T_1= & \sum_{i,j,k=1}^3 \int_{D} \partial_k u_i  \; \partial_i u_j \;  \partial_k u_j dx, \\
T_2= & \sum_{i,j,k=1}^3 \int_{D} u_i \; \partial_k \partial_i u_j \; \partial_k u_j dx  = \sum_{k=1}^3 \langle B(u,\partial_k u)\, , \, \partial_k u\rangle = 0. 
\end{align*}
Using integration by parts, we deduce $T_1=T_{1,1}+T_{1,2}$, where since ${\rm div\,} u=0$ 
\begin{align*} 
 T_{1,1} = &-  \sum_{j,k=1}^3 \int_{D} \partial_k \Big( \sum_{i=1}^3 \partial_i u_i\Big)\,  u_j  \, \partial_k u_j dx =0, \\
T_{1,2}=&-  \sum_{i,j,k=1}^3 \int_{D} \partial_k u_i \,  u_j \,  \partial_i\partial_k u_j dx. 
\end{align*}
The inequality \eqref{fgh2} applied with $f=u_j$, $g=\partial_k u_i$ and $h=\partial_i \partial_k u_j$ implies
\[ |T_{1,2}|\leq \sum_{i,j,k=1}^3 \varepsilon_0 \| \partial_i\partial_k u_j\|_{L^2}^2 +  \sum_{i,j,k=1}^3 \frac{\varepsilon_1}{4\varepsilon_0}
\big\| |u_j|^\alpha \partial_k u_i \|_{L^2}^2 
+  \sum_{i,j,k=1}^3 \frac{C_\alpha}{\varepsilon_0 \varepsilon_1^{\frac{1}{\alpha-1}}} \| \partial_k u_i\|_{L^2}^2.\]
This completes  the proof of \eqref{1.18}.

(ii) Let $\alpha =1$ and $u\in X_1$. Then an integration by parts implies 
 \[  \langle  A^{1/2} B(u,u)\, , \,A^{1/2} u \rangle = \sum_{i,j,k=1}^3 \int_{D} \partial_k\big[ u_i \, \partial_i u_j \big]\,  \partial_k u_j dx 
 =-  \sum_{i,j=1}^3  \int_{D}  u_i \, \partial_i u_j \; \Delta u_j \, dx.
  \] 
The Cauchy-Schwarz  and Young inequalities imply \eqref{1.18Bis}. 
\end{proof}
\bigskip

For $\varphi \in X_0$,  set
\begin{equation}		\label{defF}
F(\varphi)= -\nu A\varphi - B(\varphi, \varphi) -a  \Pi |\varphi|^{2\alpha} \varphi.
\end{equation}

Lemma 2.2 page 415 in \cite{BaLi} provides upper and lower bounds of the non linear Brinkman Forchheimer term. 
 Let $\alpha \in [1,\infty)$;  there exist positive constants $C$ and $\kappa$ such that for $u,v\in \RR^3$
\begin{align}			\label{1.9}
\big| |u|^{2\alpha} u - |v|^{2\alpha} v \big| & \leq C |u-v|\, \big( |u|^{2\alpha} + |v|^{2\alpha} \big), \\
\big( |u|^{2\alpha} u - |v|^{2\alpha} v \big) \cdot (u-v) & \geq  \kappa |u-v|^2 \big( |u|+|v|\big)^{2\alpha}. 
 \label{1.10}
\end{align}

The following lemma gives upper bounds of $F$ for any $\alpha \in [1,\infty)$.
\begin{lemma}		\label{lem1.3}  Let $\alpha \in [1,+\infty)$. \\
 (i) Let $u\in X_0$, $v\in L^2(0,T;V) \cap L^{2\alpha+2}([0,T]\times D;\RR^3)$.  Then 
\begin{align}			\label{1.20}
\int_0^T\!\!  | \langle F(u(t)), v(t)\rangle | dt& \leq  C\big[ \|v\|_{L^2(0,T;V)} \|u\|_{L^2(0,T;V)} + \|v\|_{L^{2\alpha+2}([0,T]\times D;\RR^3)}
\|u\|_{L^{2\alpha+2}([0,T]\times D;\RR^3)}^{2\alpha+1} \nonumber  \\
& + \|v\|_{L^2(0,T;V)}\;  \underset{{t\in [0,T]}}{\mbox{\rm ess sup }}
 \|u(t)\|_{H}^{\frac{\alpha-1}{\alpha}} \|u\|_{L^{2\alpha+2}([0,T]\times D ; \RR^3)}^{\frac{\alpha +1}{\alpha}} T^{\frac{\alpha-1}{2\alpha}}\big]
\end{align}
for some positive constant $C$.\\
(ii) Let $u\in {\mathcal X}_0$, $v\in L^4(\Omega;L^2(0,T;V))\cap  L^{2\alpha+2}(\Omega_T\times D;\RR^3)$. Then  
\begin{align}			\label{1.21}
\EE\int_0^T \! |\langle &F(u(t)),v(t)\rangle | dt \leq C\Big[ \|v\|_{L^2(\Omega_T;V)} \|u\|_{L^2(\Omega_T;V)} +
 \|v\|_{L^{2\alpha+2}(\Omega_T \times D;\RR^3)}
\|u\|_{L^{2\alpha+2}(\Omega_T\times D;\RR^3)}^{2\alpha+1} \nonumber  \\
& + \|v\|_{L^4(\Omega; L^2(0,T;V))} \Big\{  \EE\Big(  \underset{{t\in [0,T]}}{\mbox{\rm ess sup }}
 \|u(t)\|_{H}^4\Big)\Big\}^{\frac{\alpha-1}{\alpha}} \|u\|_{L^{2\alpha+2}(\Omega_T\times D ; \RR^3)}^{\frac{\alpha +1}{\alpha}} 
 T^{\frac{\alpha-1}{2\alpha}}\Big]
\end{align} 
for some positive constant $C$. 
\end{lemma}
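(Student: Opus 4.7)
The plan is to decompose $F(u) = -\nu Au - B(u,u) - a\Pi |u|^{2\alpha} u$ and bound each of the three resulting integrals separately, corresponding to the three terms appearing on the right-hand side of \eqref{1.20} and \eqref{1.21}.

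For part (i), I would first treat the Stokes piece: by integration by parts and the Cauchy--Schwarz inequality (in both space and time),
\[
\nu \int_0^T \!\!|\langle Au(t), v(t)\rangle|\,dt \;=\; \nu \int_0^T \!\!|(\nabla u(t),\nabla v(t))|\,dt \;\leq\; \nu \, \|u\|_{L^2(0,T;V)}\,\|v\|_{L^2(0,T;V)},
\]
which absorbs into the first term on the right of \eqref{1.20}. Next, for the Brinkman--Forchheimer piece, Hölder's inequality in space with exponents $\frac{2\alpha+2}{2\alpha+1}$ and $2\alpha+2$ gives
\[
\big|\big(|u(t)|^{2\alpha}u(t),v(t)\big)\big| \;\leq\; \|u(t)\|_{\LL^{2\alpha+2}}^{2\alpha+1}\,\|v(t)\|_{\LL^{2\alpha+2}},
\]
and a further Hölder inequality in time with exponents $\frac{2\alpha+2}{2\alpha+1}$ and $2\alpha+2$ yields the second term on the right of \eqref{1.20}. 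Finally, for the bilinear piece, one just invokes \eqref{1.11} of Lemma \ref{upperB}, which supplies exactly the third term. Adding the three estimates establishes \eqref{1.20}.

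For part (ii), the strategy is the same decomposition, but now combined with Hölder's inequality on $\Omega$. The Stokes term is handled by Cauchy--Schwarz on $\Omega_T$ directly. For the Brinkman--Forchheimer term, after the spatial Hölder estimate above, I would apply Hölder on $\Omega_T$ with the same exponents $\frac{2\alpha+2}{2\alpha+1}$ and $2\alpha+2$ to land on $\|u\|_{L^{2\alpha+2}(\Omega_T\times D)}^{2\alpha+1}\,\|v\|_{L^{2\alpha+2}(\Omega_T\times D)}$. For the bilinear term I would simply quote inequality \eqref{1.13} from Lemma \ref{upperB}, whose right-hand side matches the third term in \eqref{1.21}.

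No real obstacle is expected here: the lemma is essentially a bookkeeping statement assembling Hölder bounds for the three pieces of $F$, with the nontrivial bilinear estimate already done in Lemma \ref{upperB}. The only point requiring a little care is ensuring the exponents in the last (bilinear) contribution match those stated, in particular the factor $T^{(\alpha-1)/(2\alpha)}$ and the appearance of the fourth-order moment in $\Omega$ in \eqref{1.21}; both are inherited verbatim from \eqref{1.11} and \eqref{1.13}, so no additional computation is needed beyond applying those inequalities.
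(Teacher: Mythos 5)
Your proposal is correct and follows essentially the same route as the paper: the Stokes part is handled by integration by parts and Cauchy--Schwarz, the Brinkman--Forchheimer part by H\"older with conjugate exponents $\frac{2\alpha+2}{2\alpha+1}$ and $2\alpha+2$, and the bilinear part by quoting \eqref{1.11} and \eqref{1.13} of Lemma \ref{upperB}, with part (ii) obtained by the same decomposition plus H\"older in $\omega$. The only caveat is cosmetic: \eqref{1.13} carries the exponent $\frac{\alpha-1}{4\alpha}$ on $\EE\big(\esssup_{t\in[0,T]}\|u(t)\|_H^4\big)$, so the exponent $\frac{\alpha-1}{\alpha}$ printed in \eqref{1.21} appears to be a typo in the statement rather than a gap in your argument, which correctly inherits the exponent from \eqref{1.13}.
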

\begin{proof}
Integration by parts and the Cauchy-Schwarz inequality imply
\[ \nu \int_0^T \! \! | \langle  A u(t)\, , \, v(t)\rangle | dt = \int_0^T\! \Big| -
\nu \int_{D} \!  A^{\frac{1}{2}}  u(t,x)  A^{\frac{1}{2}}  v((t,x) dx \Big| dt
 \leq \nu \|u\|_{L^2(0,T;V)}
\|v\|_{L^2(0,T;V)}.
\]

Furthermore, H\"older's inequality with conjugate exponents $2\alpha+2$ and $\frac{2\alpha+2}{2\alpha+1}$ yields
\[ \int_0^T \Big| \int_{D} |u(t,x)|^{2\alpha} u(t,x) v(t,x) dx \Big| dt \leq 
\big\| |u|^{2\alpha} u\|_{L^{\frac{2\alpha +2}{2\alpha +1}}([0,T]\times D;\RR^3)}
\|v\|_{L^{2\alpha +2}([0,T]\times D;\RR^3)}.\]
Using the above upper estimates with  the inequality \eqref{1.11} concludes the proof of \eqref{1.20}. 

(ii) The upper estimate \eqref{1.21} is a straightforward consequence of  the upper estimates \eqref{1.13}, \eqref{1.20}, 
 the Cauchy-Schwarz and H\"older inequalities.
\end{proof}
\medskip

The next lemma provides estimates of the gradient of $F(u)$ for $\alpha \in [1,+\infty)$. Note that when $\alpha=1$, this requires that the coefficient
$a$ in front of the Brinkman-Forchheimer smoothing term is ``not too small" compared to the viscosity $\nu$.
\begin{lemma}			\label{lem1.4}
(i) Let  $\alpha >1$.  
For $\eta \in (0,\nu)$, $\tilde{a}\in (0,a)$, there exists a positive constant $C:=C(\alpha, \eta, \tilde{a})$ such that for  $u\in X_1$
 and $t\in [0,T]$,
\begin{align} 			\label{1.22}
\int_0^t \!\! \langle & A^{1/2}  F(u(s))  ,   A^{1/2}u(s) \rangle ds  \nonumber \\
&\leq -\eta \! \int_0^t\!\!  \| A u(s)\|_{\LL^2}^2 ds
 - \tilde{a}\!  \int_0^t \!\! \big\| |u(s)|^{\alpha} \nabla u(s) 
\big\|_{\LL^2}^2 ds + C\! \int_0^t \!\! \| \nabla u(s)\|_{\LL^2}^2 ds.
\end{align}
(ii) Let  $\alpha =1$ and suppose  $4\nu a >1$. Then for $\eta \in \big( 0, \nu - \frac{1}{4a}\big)$ and 
$\tilde{a} = a-\frac{1}{4(\nu-\eta)}$ we have
\begin{equation} 	\label{1.22Bis}
\int_0^t \!\! \langle  A^{1/2}  F(u(s))  ,   A^{1/2}u(s) \rangle ds \leq -\eta \! \int_0^t\!\!  \| A u(s)\|_{\LL^2}^2 ds
 - \tilde{a}\!  \int_0^t \!\! \big\| |u(s)|^{\alpha} \nabla u(s) \big\|_{\LL^2}^2 ds . 
\end{equation}
\end{lemma}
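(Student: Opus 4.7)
The plan is to write $F(u) = -\nu A u - B(u,u) - a \Pi (|u|^{2\alpha} u)$ and estimate the three resulting contributions to $\langle A^{1/2} F(u), A^{1/2} u\rangle$ separately. The first gives $-\nu \|Au\|_{\LL^2}^2$ exactly. The convective contribution is controlled using Lemma \ref{upper_nablaB}: in case (i) by \eqref{1.18}, and in case (ii) by \eqref{1.18Bis}. So the real computation is for the Brinkman–Forchheimer term, and once it produces a good sign we just have to juggle constants.

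For the Brinkman–Forchheimer piece I would exploit that $\Pi$ is self-adjoint on $\LL^2$ and that $Au \in H$, so $\langle A^{1/2} \Pi(|u|^{2\alpha} u), A^{1/2} u\rangle = (|u|^{2\alpha} u, Au) = -(|u|^{2\alpha} u, \Delta u)$ in the periodic setting where $Au = -\Delta u$ for $u \in V$. Integration by parts then gives
\begin{equation*}
\big\langle A^{1/2}(|u|^{2\alpha} u), A^{1/2} u\big\rangle = 2\alpha \sum_{k=1}^3 \int_D |u|^{2\alpha - 2} (u\cdot \partial_k u)^2 dx + \big\| |u|^\alpha \nabla u\big\|_{\LL^2}^2,
\end{equation*}
which is pointwise bounded below by $\||u|^\alpha \nabla u\|_{\LL^2}^2$. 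Hence the Brinkman–Forchheimer term contributes $\leq -a \||u|^\alpha \nabla u\|_{\LL^2}^2$.

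Combining the three pieces gives, before any time integration,
\begin{equation*}
\big\langle A^{1/2} F(u), A^{1/2} u\big\rangle \leq -\nu \|Au\|_{\LL^2}^2 + \big| \langle A^{1/2} B(u,u), A^{1/2} u\rangle \big| - a\,\big\||u|^\alpha \nabla u\big\|_{\LL^2}^2.
\end{equation*}
In case (i) we insert \eqref{1.18}, obtaining $-(\nu - C\varepsilon_0) \|Au\|_{\LL^2}^2 - (a - C\varepsilon_1/(4\varepsilon_0))\||u|^\alpha \nabla u\|_{\LL^2}^2$ plus a multiple of $\|\nabla u\|_{\LL^2}^2$. Given $\eta \in (0,\nu)$ and $\tilde a \in (0,a)$ one first fixes $\varepsilon_0$ with $C\varepsilon_0 = \nu - \eta$ and then picks $\varepsilon_1$ small enough that $C\varepsilon_1/(4\varepsilon_0) \leq a - \tilde a$; the remaining constant $CC_\alpha \varepsilon_0^{-1} \varepsilon_1^{-1/(\alpha-1)}$ becomes the $C(\alpha, \eta, \tilde a)$ of the statement. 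Integrating in $s$ yields \eqref{1.22}.

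In case (ii) with $\alpha = 1$, inserting \eqref{1.18Bis} gives $-(\nu - \varepsilon)\|Au\|_{\LL^2}^2 - (a - 1/(4\varepsilon))\||u|\nabla u\|_{\LL^2}^2$. The delicate point — and where the condition $4\nu a > 1$ enters — is that no lower-order $\|\nabla u\|_{\LL^2}^2$ cushion is available: one is forced to have $1/(4a) < \varepsilon < \nu$ simultaneously, which is a nonempty window precisely when $4\nu a > 1$. Setting $\varepsilon = \nu - \eta$ for $\eta \in (0, \nu - 1/(4a))$ makes both coefficients strictly negative with $\tilde a = a - 1/(4(\nu - \eta))$, and integration in time gives \eqref{1.22Bis}. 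The main obstacle is really just this tight balance in the $\alpha = 1$ case; in the $\alpha > 1$ case the extra free parameter $\varepsilon_1$ from the three-term Young inequality \eqref{fgh2} provides the breathing room needed to hit arbitrary $\eta < \nu$ and $\tilde a < a$.
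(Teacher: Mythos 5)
Your proof is correct and takes essentially the same route as the paper's: the same decomposition of $F$, the same integration by parts giving the lower bound $\int_D \nabla\big(|u|^{2\alpha}u\big)\cdot\nabla u\,dx \geq \big\||u|^\alpha\nabla u\big\|_{\LL^2}^2$ (the paper's inequality \eqref{*}), and the same use of \eqref{1.18} with the choices $C\varepsilon_0=\nu-\eta$, $C\varepsilon_1/(4\varepsilon_0)\leq a-\tilde a$ in case (i), respectively \eqref{1.18Bis} with $\varepsilon=\nu-\eta$ and the window $1/(4a)<\varepsilon<\nu$ opened by $4\nu a>1$ in case (ii). The only difference is cosmetic: you make explicit the Leray-projection step $\langle A^{1/2}\Pi(|u|^{2\alpha}u),A^{1/2}u\rangle=-(|u|^{2\alpha}u,\Delta u)$, which the paper leaves implicit.
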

\begin{proof}
(i) Let $\alpha \in (1,\infty)$. For  $u\in X_1$, integration by parts implies for a.e. $s\in [0,t]$,
\[ \nu \langle  A^{\frac{1}{2}}  \Delta u(s),  A^{\frac{1}{2}}   u(s)\rangle = -\nu \| A  u(s)\|_{\LL^2}^2.\]
Furthermore,  
\begin{align}	\label{*}
\int_{D} \!\! \nabla \big( |u(s)|^{2\alpha} u(s)\big) \cdot \nabla u(s) dx = & \int_{D}\big[  |u(s)|^{2\alpha} \nabla u(s) \cdot \nabla u(s) 
+ 2\alpha |u(s)|^{2(\alpha -1)} \big( u(s)\cdot \nabla u(s) \big)^2 \big] dx	\nonumber \\
\geq & \int_{D}  |u(s)|^{2\alpha} \nabla u(s) \cdot \nabla u(s) dx  = \big\| |u(s)|^\alpha \nabla u(s) \big\|_{\LL^2}^2.
\end{align}
Hence, using   \eqref{1.18} with $C\, \varepsilon_0\in (0, \nu-\eta)$, then $\varepsilon_1$ 
such that $C\, \frac{\varepsilon_1}{4\varepsilon_0} \in
(0, a-\tilde{a})$, we deduce that for a.e. $s\in [0,T]$, 
\begin{equation}		\label{***}
 \langle  A^{1/2}  F(u(s))  ,   A^{1/2}u(s)\rangle \leq -\eta \| Au(s)\|_{\LL^2}^2 - \tilde{a} \big\| |u(s)|^\alpha \nabla u(t)\|_{\LL^2}^2 +
C(\alpha, \eta, \tilde{a}) \| \nabla u(s)\|_{\LL^2}^2.
\end{equation} 
Integrating this inequality on the time interval $[0,t]$ concludes the proof of \eqref{1.22}.

(ii) Let $\alpha=1$. Then using \eqref{1.18Bis} and \eqref{*}, we deduce for $\epsilon >0$  and $s\in [0,T]$
\[ \langle  A^{1/2}  F(u(s))  ,   A^{1/2}u(s)\rangle \leq -(\nu-\epsilon) \|A u(s)\|_{\LL^2}^2 + \frac{1}{4\epsilon} \| |u(s)| \nabla u(s)\|_{\LL^2}^2 -a 
\| |u(s)| \nabla u(s)\|_{\LL^2}^2.\]
Since $4a\nu >1$, for  $\eta \in \big( 0, \nu - \frac{1}{4a}\big)$ and $\epsilon = \nu-\eta$ and  $\tilde{a} = a-\frac{1}{4(\nu-\eta)}$ 
we deduce
\begin{equation}		\label{4*}
 \langle  A^{1/2}  F(u(s))  ,   A^{1/2}u(s) \rangle \leq -\eta \| Au(s)\|_{\LL^2}^2 - \tilde{a} \big\| |u(s)|^\alpha \nabla u(s)\|_{\LL^2}^2 .
\end{equation} 
 Integrating on the time interval $[0,t]$, we deduce \eqref{1.22Bis}.               
\end{proof} 
\bigskip

We finally prove upper estimates of increments $F(u)-F(v)$ for $\alpha \in [1,\infty)$.
\begin{lemma} 			\label{lem1.5}
There exists a positive constant $\kappa$ depending on $\alpha \in [1,+\infty)$, and for $\eta \in (0,\nu)$ a positive constant $\bar{C}(\eta)$, such that
for $u,v\in V\cap L^{2\alpha +2}(D;\RR^3)$,
\begin{equation} 				\label{1.26}
\langle F(u)-F(v), u-v\rangle \leq - \eta \| \nabla (u-v)\|_{\LL^2}^2 - a\kappa \big\| (|u|+|v|)^\alpha (u-v)\big\|_{\LL^2}^2 + 
\bar{C}(\eta) \| \nabla v\|_{\LL^2}^4 \| u-v\|_{\LL^2}^2.
\end{equation}
\end{lemma}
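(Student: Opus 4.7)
\medskip

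\noindent\textbf{Proof plan for Lemma \ref{lem1.5}.}
Let $w := u-v$. The plan is to split $\langle F(u)-F(v),w\rangle$ into its three natural constituents coming from the Stokes, bilinear, and Brinkman--Forchheimer parts of $F$ defined in \eqref{defF}, and to control the bilinear piece by a Gagliardo--Nirenberg/Young argument that leaves room to absorb a small multiple of $\|\nabla w\|_{\LL^2}^2$ into the viscosity term. The Stokes part gives exactly $-\nu\|\nabla w\|_{\LL^2}^2$ after integration by parts, and the Brinkman--Forchheimer part is directly handled by the monotonicity inequality \eqref{1.10}, which produces the term $-a\kappa\,\bigl\|(|u|+|v|)^{\alpha}w\bigr\|_{\LL^2}^2$. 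So the whole issue reduces to estimating the bilinear contribution.

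For the bilinear term, I would first use the standard algebraic identity
\[
B(u,u)-B(v,v)=B(u,w)+B(w,v),
\]
and then invoke the antisymmetry \eqref{B} to kill $\langle B(u,w),w\rangle=0$, leaving only $\langle B(w,v),w\rangle$. To bound the latter, I would apply H\"older's inequality and the 3D Gagliardo--Nirenberg inequality from \eqref{GagNir},
\[
|\langle B(w,v),w\rangle|\le \|w\|_{\LL^4}^2\,\|\nabla v\|_{\LL^2}
\le \bar C_4^{\,2}\,\|w\|_{\LL^2}^{1/2}\,\|\nabla w\|_{\LL^2}^{3/2}\,\|\nabla v\|_{\LL^2}.
\]
Then Young's inequality with conjugate exponents $4/3$ and $4$, chosen so that the $\|\nabla w\|_{\LL^2}^2$ factor comes out with coefficient $\nu-\eta$, yields
\[
|\langle B(w,v),w\rangle|\le (\nu-\eta)\,\|\nabla w\|_{\LL^2}^2+\bar C(\eta)\,\|\nabla v\|_{\LL^2}^4\,\|w\|_{\LL^2}^2.
\]

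Combining the three estimates gives
\[
\langle F(u)-F(v),w\rangle\le -\eta\,\|\nabla w\|_{\LL^2}^2 - a\kappa\,\bigl\|(|u|+|v|)^{\alpha}w\bigr\|_{\LL^2}^2+\bar C(\eta)\,\|\nabla v\|_{\LL^2}^4\,\|w\|_{\LL^2}^2,
\]
which is exactly \eqref{1.26}. The only delicate point is choosing the Young exponents so that precisely $\|\nabla v\|_{\LL^2}^{4}\|w\|_{\LL^2}^{2}$ appears on the right-hand side; this quartic factor is what later permits a Gronwall argument using $\int_0^T\|\nabla v(s)\|_{\LL^2}^4\,ds<\infty$, which is finite thanks to the moment estimate \eqref{2.18}. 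Note that the Brinkman--Forchheimer coercivity \eqref{1.10} is only used qualitatively here; unlike in Lemma \ref{lem1.4}, we do not need to absorb part of the bilinear term against the smoothing term, so no constraint relating $\nu$ and $a$ is required.
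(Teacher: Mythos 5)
Your proposal is correct and follows essentially the same route as the paper's proof: integration by parts for the Stokes part, the monotonicity inequality \eqref{1.10} for the Brinkman--Forchheimer part, and for the bilinear term the identity $\langle B(u,u)-B(v,v),u-v\rangle=\langle B(u-v,v),u-v\rangle$ combined with H\"older, the Gagliardo--Nirenberg inequality \eqref{GagNir}, and Young's inequality with exponents $4/3$ and $4$ to absorb $(\nu-\eta)\|\nabla(u-v)\|_{\LL^2}^2$ into the viscosity term. Your closing observations (that the coercivity \eqref{1.10} is used as-is with no $\nu$--$a$ constraint, and that the quartic factor $\|\nabla v\|_{\LL^2}^4$ is what makes the later Gronwall argument work) are also accurate.
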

\begin{proof} 
Using integration by parts, we obtain $\nu \langle \Delta(u-v),u-v\rangle = -\nu \| \nabla(u-v)\|_{\LL^2}^2$. The monotonicity property 
\eqref{1.10} implies 
\begin{equation} 	\label{1.27}
 a\int_{D} \big( |u(x)|^{2\alpha} u(x) - |v(x)|^{2\alpha} v(x)\big) \cdot \big( u(x)-v(x)\big) dx \geq 
a \kappa \big\| (|u|+|v])^\alpha (u-v)\big\| _{\LL^2}^2.
\end{equation}
Finally, H\"older's inequality and the Gagliardo-Nirenberg inequality \eqref{GagNir} for the $\LL^4$ norm imply
\begin{align*}
|\langle B(u,u) - B(v,v) , u-v\rangle | =& |\langle B(u-v,v) , u-v\rangle |\\
\leq & \|u-v\|_{\LL^4}^2 \|\nabla v\|_{\LL^2} \leq \bar{C}_4^2 \| u-v\|_{\LL^2}^{\frac{1}{2}} \| \nabla (u-v)\|_{\LL^2}^{\frac{3}{2}} \| \nabla v\|_{\LL^2} \\
\leq & \frac{3}{4} \varepsilon^{\frac{4}{3}} \| \nabla (u-v)\|_{\LL^2}^2 + \frac{1}{4} \frac{1}{\varepsilon^4} \bar{C}_4^8 \|\nabla v\|_{\LL^2}^4 \| u-v\|_{\LL^2}^2,
\end{align*}
where the last inequality holds for any $\varepsilon >0$ by Young's inequality. Choosing $\frac{3}{4} \varepsilon^{\frac{4}{3}} \in (0, \nu-\eta)$,
we conclude the proof of \eqref{1.26}.
\end{proof} 

We next prove that  \eqref{3D-NS} has a unique strong solution   in $\mathcal{X}_1$. The outline is quite classical, based on some
 Galerkin approximation and a priori estimates. 
 
 \subsection{Galerkin approximation and a priori estimates} 		\label{Ap2}
   Recall that $D$ is periodic domain of $\RR^3$.   Let $(e_n, n\geq 1)$ be the orthonormal basis
of   $H$  defined in section \ref{def-WG} (that is made of functions in $H$ which are also orthogonal in $V$).  
For every integer $n\geq 1$ we set ${\mathcal K}_n:=\mbox{\rm span} (\zeta_1, \cdots , 
\zeta_n)$. Let $\Pi_n$ denote the projection from $K$  onto  $Q^{1/2}({\mathcal K}_n)$,  and 
let $W_n(t)=\sum_{j=1}^n \sqrt{q_j} \zeta_j \beta_j(t) = \Pi_n W(t)$.

Recall that if ${\mathcal H}_n= \mbox{\rm span} (e_1, ..., e_n)$, the orthogonal projection $P_n$  of  $H$ onto   ${\mathcal H}_n$ 
restricted to $V$ coincides with the
orthogonal projection  of  $V$ onto ${\mathcal H}_n$. 

Fix $n\geq 1$ and consider the following stochastic ordinary differential equation
on the $n$-dimensional space ${\mathcal H}_n$  defined  by  $u_{n}(0)=P_n u_0$,  and for
$t\in [0,T]$ and  $v \in {\mathcal H}_n$:
 \begin{equation} \label{un_H}
d(u_{n}(t), v)= \big\langle P_n F(u_{n}(t)),v\big\rangle  dt  
+  (P_n\,  G( u_{n}(t)) \, \Pi_n\,  dW(t), v), \quad \PP\, \mbox{\rm a.s.}, 
\end{equation}
where $F$ is defined in \eqref{defF}. 
 Then for $k=1, \, \cdots, \, n$ we have for $t\in [0,T]$:
 \[d(u_{n}(t), e_k)= \big\langle P_n F(u_{n}(t)),e_k \big\rangle\,  dt
+ \sum_{j=1}^n q_j^{\frac{1}{2}} \big( P_n\,  G(u_{n}(t))  \zeta_j\, ,\, e_k \big) \,  d\beta_j(t), \quad \PP \, \mbox{a.s.}
\] 
Note that  for $v\in {\mathcal H}_n$   the map
$  u\in  {\mathcal H}_n  \mapsto \langle F(u) \, ,\, v\rangle  $  
is locally Lipschitz. Indeed, $\HH^2\subset \LL^{2\alpha +2}$ and there exists some constant $C(n)$
 such that  $\|v\|_{\HH^2}  \leq C(n) \|v\|_{\LL^2}$ 
 for $v\in {\mathcal H}_n$. 
Let $\varphi, \psi, v\in {\mathcal H}_n$; 
integration by parts implies that 
 \[ 
|\langle \Delta \varphi - \Delta \psi ,v\rangle|  \leq 
 \|\varphi - \psi\|_V \, \|v\|_V   
 \leq  C(n)^2 \|\varphi - \psi\|_{\LL^2}\, \|v\|_{\LL^2}.
 \]  
 In the polynomial nonlinear term, the upper estimate \eqref{1.9}, the H\"older inequality with exponents $\frac{\alpha +1}{\alpha}$, $2\alpha +2$,  and
 $2\alpha +2$,  and the Sobolev embedding $\HH^2\subset \LL^{2\alpha+2}$ imply  
 \begin{align*}
  \Big| \int_{D}\!  \big(  |\varphi (x) |^{2\alpha}\varphi (x)- &|\psi (x)|^{2\alpha} \psi (x) \big) v(x) dx  \Big|   \\
 & \leq C\, 
\big( \|\varphi\|_{\LL^{2\alpha +2}}^{2\alpha }  + \|\psi\|_{\LL^{2\alpha +2}}^{2\alpha } \big)
\,  \|\varphi - \psi\|_{\LL^{2\alpha +2}}\,  \|v\|_{\LL^{2\alpha +2}} 
 \\
&\leq C \, C(n)^{2(\alpha +1)} \big( \|\varphi \|_{\LL^2}^{2\alpha} +\|\psi \|_{\LL^2}^{2\alpha}\big) \, \|\varphi - \psi\|_{\LL^2}\,  \|v\|_{\LL^2} .
\end{align*} 

Finally,  using  integration by parts, the  H\"older and Gagliardo-Nirenberg inequalities, we deduce:
\begin{align*}
 |\langle & B(\varphi ,\varphi) - B(\psi,\psi ), v\rangle|  
 =\big| -\langle B(\varphi -\psi , v)\, , \, \varphi \rangle - \langle B(\psi, v)\, , \, \varphi-\psi \rangle \big| \\
 & \leq 
C\, \|\varphi - \psi\|_{\LL^4} \big( \|\varphi \|_{\LL^4}+ \| \psi\|_{\LL^4} \big)  \|\nabla v\|_{\LL^2} 
\leq C C(n)^3 \|\varphi - \psi\|_{\LL^2} \big( \|\varphi\|_{\LL^2}+\|\psi\|_{\LL^2}\big) \|v\|_{\LL^2}.
\end{align*}  

 Condition ({\bf G})  implies that the
map    $u\in {\mathcal H}_n  \mapsto  \big( \sqrt{q_j}\, \big(G(u) \zeta_j\, ,\, e_k\big) : 1\leq
j,k\leq n \big)$ 
satisfies the classical global linear growth and Lipschitz conditions from ${\mathcal H}_n$ to $n\times n$ matrices uniformly in $t\in [0,T]$.
Hence by a well-known result about existence and
uniqueness of solutions to
stochastic  differential equations  (see e.g. \cite{Kunita}), there
exists a maximal solution  $u_{n}=\sum_{k=1}^n (u_{n}\, ,\, e_k\big)\, e_k\in {\mathcal H}_n$ to \eqref{un_H},
i.e., a stopping time
$\tau_{n}^*\leq T$ such that \eqref{un_H} holds for $t< \tau_{n}^*$ and if $\tau_n^* <T$, $\|u_{n}(t)\|_{L^2} \to \infty$ as $t \uparrow \tau_{n}^*$.
\par
The following proposition shows that $\tau_n^*=T$ a.s., and provides a priori estimates on norms of $u_n$, which do not depend on $n$. 
\begin{prop}			\label{apriori}
 Let $\alpha \in [1,\infty)$, and if $\alpha=1$, suppose that $4\nu a >1$. 

\noindent (i) Let $u_0$ be ${\mathcal F}_0$-measurable such that  $\EE\big( \|u_0\|_H^2\big)<\infty$,  
$T>0$ and $G$ satisfy \eqref{growthG_H} and
\eqref{LipG}. Then the evolution equation \eqref{un_H} with initial condition $P_n u_0$ has a unique global solution on $[0,T]$ (i.e., $\tau_n^*=T$
a.s.) with a modification  $u_n\in C([0,T];{\mathcal H}_n)$. Furthermore, if $\EE\big( \|u_0\|_H^{2p}\big) <\infty$ for some $p\in  [1,\infty)$, we have
$u_n \in {\mathcal X}_0$  and  
\begin{equation}			\label{apriori_H}
\sup_n \EE\Big( \sup_{t\in [0,T]} \|u_n(t)\|_H^{2p} + \int_0^T \big[ \|u_n(t)\|_V^2 + \|u_n(t)\|_{\LL^{2\alpha +2}}^{2\alpha +2}\big] 
\|u_n(t)\|_H^{2p-2}
\,  dt \Big) \leq C \big[ 1+\EE(\|u_0\|_H^{2p}) \big]. 
\end{equation}
(ii) If $\EE(\|u_0\|_V^{2p}) <\infty$ for some $p\in [1,\infty)$ and $G$ satisfies also \eqref{growthG_V}, we have furthermore
\begin{align}		\label{apriori_V}
\sup_n \EE\Big( \sup_{t\in [0,T]} \|u_n(t)\|_V^{2p} &+ \int_0^T \big[ \|A  u_n(t)\|_{\LL^2}^2 +  \||u_n(t)|^\alpha \nabla u_n(t)\|_{\LL^2}^2 \big]
\|u_n(t)\|_V^{2p-2} dt\Big) \nonumber \\
& \leq C \big[ 1+\EE(\|u_0\|_V^{2p}) \big] . 
\end{align}
\end{prop}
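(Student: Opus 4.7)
The argument will proceed by applying Itô's formula to suitable norms of the finite-dimensional Galerkin process $u_n$, controlling the drift via the nonlinear estimates from Lemmas \ref{lem1.3}--\ref{lem1.4}, handling the stochastic integral via the Burkholder--Davis--Gundy inequality, and closing via Gronwall. Local existence and uniqueness on $[0,\tau_n^*)$ is already produced by the classical SDE theory since the coefficients of \eqref{un_H} are locally Lipschitz on the finite-dimensional space ${\mathcal H}_n$, so only the a priori estimates remain to be established; the moment bounds will then force $\tau_n^*=T$ a.s.

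For (i), the plan is to apply Itô's formula to $\|u_n(t\wedge \tau_R)\|_H^{2p}$ with the localization $\tau_R=\inf\{t\leq \tau_n^*: \|u_n(t)\|_H\geq R\}$. The decisive cancellations are $\langle B(u_n,u_n),u_n\rangle=0$ from \eqref{B} and the pointwise identity $(|u_n|^{2\alpha}u_n,u_n)=\|u_n\|_{\LL^{2\alpha+2}}^{2\alpha+2}$, so that the drift contributes
\[
2p\|u_n\|_H^{2p-2}\bigl[-\nu\|u_n\|_V^2 - a\|u_n\|_{\LL^{2\alpha+2}}^{2\alpha+2}\bigr]+p(2p-1)\|u_n\|_H^{2p-2}\|P_nG(u_n)\Pi_n\|_{{\mathcal L}_Q}^2,
\]
and the last term is controlled by the growth condition \eqref{growthG_H} and $\mbox{\rm Tr}\,Q<\infty$. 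Taking the supremum over time and applying BDG to the stochastic integral, Young's inequality will absorb a small fraction of $\sup_t\|u_n\|_H^{2p}$ into the left-hand side; Gronwall's lemma and letting $R\uparrow\infty$ (via Fatou) then yields \eqref{apriori_H} uniformly in $n$, which in particular rules out blow-up so that $\tau_n^*=T$ a.s.

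For (ii), I will apply Itô's formula to $\|u_n(t\wedge\tilde\tau_R)\|_V^{2p}=(\nabla u_n,\nabla u_n)^{p}$ with the analogous $V$-localization. Here the crucial step is the bound on the drift: using the orthogonality relations \eqref{grad_Pn}, which ensure that $P_n$ commutes with $A$ on ${\mathcal H}_n$, one recovers $\langle A^{1/2}F(u_n),A^{1/2}u_n\rangle$ exactly, and then Lemma \ref{lem1.4} (inequality \eqref{1.22} if $\alpha>1$, or \eqref{1.22Bis} if $\alpha=1$, which is where the hypothesis $4\nu a>1$ enters through the choice of $\eta$ and $\tilde a$) gives
\[
2p\|u_n\|_V^{2p-2}\langle A^{1/2}F(u_n),A^{1/2}u_n\rangle\leq 2p\|u_n\|_V^{2p-2}\bigl[-\eta\|Au_n\|_{\LL^2}^2-\tilde a\||u_n|^{\alpha}\nabla u_n\|_{\LL^2}^2+C\|u_n\|_V^2\bigr].
\]
The Itô correction is controlled by the $V$-growth bound \eqref{growthG_V}, BDG plus Young absorb the martingale part into the left-hand side as before, and the discrete/continuous Gronwall lemma closes the estimate, giving \eqref{apriori_V} uniformly in $n$.

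The main obstacle is the gradient estimate in part (ii): the bilinear term $\langle A^{1/2}B(u_n,u_n),A^{1/2}u_n\rangle$ is supercritical in 3D and cannot be absorbed by the viscous dissipation alone, so the whole scheme only works because the Brinkman--Forchheimer term contributes the extra dissipative quantity $\||u_n|^{\alpha}\nabla u_n\|_{\LL^2}^2$ with a large enough coefficient. This is exactly the content of \eqref{1.18}--\eqref{1.18Bis}, and in the borderline case $\alpha=1$ the absorption is only possible under the quantitative condition $4\nu a>1$; without it the constant $\tilde a$ would be nonpositive and Gronwall could not be applied. A minor technical point is the justification of Itô's formula on the Galerkin space, which is standard since $u_n$ is an ${\mathcal H}_n$-valued semimartingale with continuous coefficients on $[0,\tau_R]$.
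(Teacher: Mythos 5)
Your proposal is correct and follows essentially the same route as the paper: It\^o's formula applied to $\|u_n\|_H^{2p}$ with the cancellation $\langle B(u_n,u_n),u_n\rangle=0$, the Davis/BDG inequality plus Young to absorb the martingale supremum, Gronwall, and passage to the limit in the localization for (i); then the gradient identity via \eqref{grad_Pn}, Lemma \ref{lem1.4} (with \eqref{1.22Bis} and the hypothesis $4\nu a>1$ when $\alpha=1$), the growth condition \eqref{growthG_V}, and the same BDG--Young--Gronwall machinery for (ii). The only cosmetic deviations are that the paper applies It\^o in two stages (first to $\|\cdot\|_H^2$, then to $z\mapsto z^p$) and works with $\|A^{1/2}u_n\|_{\LL^2}^{2p}$ rather than the full $V$-norm, recovering \eqref{apriori_V} by combining with part (i) --- your identification $\|u_n\|_V^{2p}=(\nabla u_n,\nabla u_n)^p$ should be read in that sense.
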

\begin{proof}

(i) For fixed $N>0$ set $\tau_N:= \inf\{ t\geq 0 : \|u_n(t)\|_H \geq N\} \wedge \tau_n^*$. It\^o's formula and the antisymmetry
property of  $B$ imply
\begin{equation}		\label{Ito_H^2}
\|u_n(t\wedge \tau_N)\|_H^2 = \| P_n u_0\|_H^2 -2\int_0^{t\wedge \tau_N} \!\! \big[ \nu \| \nabla u_n(s)\|_{\LL^2}^2 
+a \|u_n(s)\|_{L^{2\alpha +2}}^{2\alpha +2} \big] ds  + \sum_{i=1}^2 T_i(t),
\end{equation}
where
\begin{align*}
T_1(t)=& \; 2 \int_0^{t\wedge \tau_n} \big( G(u_n(s))\, dW_n(s)\, , \, u_n(s)\big), \quad
 T_2(t)= \; \int_0^{t\wedge \tau_n} \| P_n G(u_n(s)) \Pi_n \|_{\mathcal L}^2 \; ds. 
\end{align*}
Apply once more the It\^o formula to $z\mapsto z^p$ and $z=\|u_n(t\wedge \tau_N)\|_H^2$ for $p\in [2,\infty)$. We obtain
\begin{align}		\label{unp_H}
\|u_n(t\wedge \tau_N)\|_H^{2p} =&\;  \| P_n u_0\|_H^{2p} -2p\int_0^{t\wedge \tau_N} \!\! \big[ \nu  \| \nabla u_n(s)\|_{\LL^2}^2 
+a  \|u_n(s)\|_{L^{2\alpha +2}}^{2\alpha +2}\big] \|u_n(s)\|_H^{2p-2} ds \nonumber \\
& \;  + \sum_{i=1}^3 \bar{T}_i(t),
\end{align}
where
\begin{align*}
\bar{T}_1(t)= & \;  2p \int_0^{t\wedge \tau_N} \big( P_n G(u_n(s))\, dW_n(s) \, , \, u_n(s)\big) \, \|u_n(s)\|_H^{2p-2}, \\
\bar{T}_2(t)=&\; p\int_0^{t\wedge \tau_N} \| P_n G(u_n(s)) \Pi_n\|_{\mathcal L}^2 \,  \|u_n(s)\|_H^{2p-2}\, ds , \\
\bar{T}_3(t)=&\; 2p(p-1) \int_0^{t\wedge \tau_N} \|  \big( G(u_n(s))\, \Pi_n \big)^* u_n(s)\big\|_{K}^2\,  \|u_n(s)\|_H^{2p-4}\, ds.
\end{align*}
The growth condition \eqref{growthG_H} implies 
\[ \bar{T}_2(t) + \bar{T}_3(t) \leq p(2p-1) \int_0^{t} [K_0 + K_1 \|u_n(s\wedge \tau_N)\|_H^2] \, \|u_n(s\wedge \tau_N)\|_H^{2p-2}\, \mbox{\rm Tr } Q\, ds.\]
Using the Davis inequality, the growth condition \eqref{growthG_H} and Young's inequality, we deduce for  $\beta \in (0,1)$, 
\begin{align*}
\EE\Big(&  \sup_{s\leq t\wedge \tau_n}   \; \bar{T}_1(s)\Big) \leq \; 6p \; \EE\Big( \Big\{ \int_0^{t\wedge \tau_N} 
\|G(u_n(s))\|_{\mathcal L}^2 \; \|u_n(s)\|_H^{4p-2}\; 
 {\rm Tr}\,Q\, ds
\Big\}^{\frac{1}{2}} \Big) \\
\leq & \; \beta \; \EE\Big( \sup_{s\leq t}  \|u_n(s\wedge \tau_N)\|_H^{2p}\Big) 
+ \frac{9p^2}{\beta} \EE\!\! \int_0^t\!\! \big[ K_0+K_1 \|u_n(s\wedge \tau_N)\|_H^2\big] 
\|u_n(s\wedge \tau_N)\|_H^{2p-2} \, \mbox{\rm Tr } Q\, ds .
\end{align*}
Neglecting the first integral in the right hand side of \eqref{unp_H}, using the above upper estimates of $\bar{T}_i$ and the Gronwall lemma, we
deduce that for $\beta \in (0,1)$, 
\begin{equation}		\label{normp_unH}
\sup_{n\geq 1} \EE\Big( \sup_{s\leq T} \|u_n(s\wedge \tau_N)\|_H^{2p}\Big) \leq C(\beta,p, K_0, K_1, {\rm Tr}Q) \big[ 1+\EE(\|u_0\|_H^{2p}\big].
\end{equation}
As $N\to \infty$, the sequence of stopping times $\tau_N$ increases to $\tau^*_n$ and on the set $\{ \tau^*_n<T\}$, we have $\sup_{s\in [0, \tau_N]}
\|u_n(s)\|_H \to \infty$. Hence \eqref{normp_unH} implies $P(\tau_n^*<T)=0$ and for almost every $\omega$, for $N(\omega)$ large enough we
have $\tau_{N(\omega)}(\omega)=T$. Plugging the upper estimate \eqref{normp_unH} in \eqref{unp_H}, we conclude the proof of \eqref{apriori_H}.

Note that the above argument based on \eqref{Ito_H^2} instead of \eqref{unp_H} proves that if  $\EE(\|u_0\|_H^2) <\infty$ we have once more
$\tau_{N(\omega)}(\omega)=T$ for $N(\omega)$ large enough and a.e. $\omega$,  and that \eqref{apriori_H} holds for $p=1$.

We next prove that  $u_n\in {\mathcal X}_0$. 
Plugging the above upper estimate for $p=1$ in \eqref{Ito_H^2}, taking expected values and using Condition \eqref{growthG_H}, we obtain
\[ \EE\int_0^T \big[ \|u_n(s)\|_V^2 + \|u_n(s)\|_{L^{2\alpha +2}}^{2\alpha +2} \big] ds <\infty.\]
A similar argument using \eqref{normp_unH} in \eqref{unp_H} completes the proof of \eqref{apriori_H} when the $H$-norm of the initial condition 
has $2p$ moments.

(ii) Taking the gradient of both hand sides of \eqref{un_H}, using the It\^o formula and \eqref{grad_Pn}, we deduce for 
$    \tilde{\tau}_N:=\inf\{ s\geq 0\, : \, \|u_n(s)\|_V\geq N\} \wedge T$, 
\begin{align*}
\| A^{\frac{1}{2}} u_n(t\wedge &\tilde{\tau}_N)\|_{\LL^2}^2  = \|A^{\frac{1}{2}}  P_n u_0\|_{\LL^2}^2 
+ 2 \int_0^{t\wedge \tilde{\tau}_N} \langle A^{\frac{1}{2}}  P_n F(u_n(s)) ,
A^{\frac{1}{2}}  u_n(s)\rangle \, ds \\
& + 2\! \int_0^{t\wedge \tilde{\tau}_N} \!\! \big( A^{\frac{1}{2}}  P_n G(u_n(s)) dW_n(s) , A^{\frac{1}{2}} u_n(s)\big) + 
\int_0^{t\wedge \tilde{\tau}_N} \!\! \| A^{\frac{1}{2}}  P_n G(u_n(s)) \Pi_n\|_{\mathcal L}^2 \, ds\\
=& \, \|A^{\frac{1}{2}}  P_n u_0\|_{\LL^2}^2 + 2 \int_0^{t\wedge \tilde{\tau}_N} \langle A^{\frac{1}{2}}   F(u_n(s)) ,
A^{\frac{1}{2}}  u_n(s)\rangle \, ds \\
& + 2\! \int_0^{t\wedge \tilde{\tau}_N} \!\! \big( A^{\frac{1}{2}}   G(u_n(s)) \Pi_n dW(s) ,  A^{\frac{1}{2}}  u_n(s)\big) + 
\int_0^{t\wedge \tilde{\tau}_N} \!\! \| A^{\frac{1}{2}}  P_n G(u_n(s)) \Pi_n\|_{\mathcal L}^2 \, ds. 
\end{align*}
 Indeed, since $u_n(s)\in V$ for $s\leq t\wedge \tilde{\tau}_N$, we have $A^{\frac{1}{2}} u_n(s)\in H$ and $A^{\frac{1}{2}} G(u_n(s))\in
{\mathcal L}(K,H)$. 

Using once more the It\^o formula for the function $z\mapsto z^p$ for $p\in [2,\infty)$, we obtain
\begin{align}		\label{Ito_unV_p}
\| A^{\frac{1}{2}}  u_n(t\wedge \tilde{\tau}_N)\|_{\LL^2}^{2p}   \leq  &\; \|A^{\frac{1}{2}}   u_0\|_{\LL^2}^{2p}  + 2 \int_0^{t\wedge \tilde{\tau}_N} 
\langle A^{\frac{1}{2}}  \nabla F(u_n(s)) ,  A^{\frac{1}{2}}  u_n(s)\rangle \, \| A^{\frac{1}{2}} u_n(s)\|_{\LL^2}^{2(p-1)}  ds  \nonumber \\
&\; + \sum_{i=1}^3 \tilde{T}_i(t) ,
\end{align} 
where
\begin{align*}
\tilde{T}_1(t)&=\; 2p \int_0^{t\wedge \tilde{\tau}_N} \big( A^{\frac{1}{2}}  G(u_n(s)) dW_n(s)\, , \, A^{\frac{1}{2}}  u_n(s)\big) \, 
 \| A^{\frac{1}{2}}  u_n(s)\|_{\LL^2}^{2(p-1)} ds,\\
\tilde{T}_2(t)&=\; p\int_0^{t\wedge \tilde{\tau}_N} \| G(u_n(s) ) \Pi_n\|_{\tilde{\mathcal L}}^2 \; \| A^{\frac{1}{2}}  u_n(s)\|_{\LL^2}^{2(p-1)} ds, \\
\tilde{T}_3(t)&=\; 2p(p-1)\int_0^{t\wedge \tilde{\tau}_N} \| \big(   A^{\frac{1}{2}}  G(u_n(s))  \, \Pi_n  \big)^* A^{\frac{1}{2}}  u_n(s)\|_{K}^2 \; 
 \| A^{\frac{1}{2}}  u_n(s)\|_{\LL^2}^{2(p-2)}\, ds.
\end{align*}
 Since 
\[ 
\| \big(A^{\frac{1}{2}}  G(u_n(s))  \, \Pi_n  \big)^*\|_{{\mathcal L}(H;K)} \leq \|A^{\frac{1}{2}}  G(u_n(s))\|_{{\mathcal L}(K;H)} \leq 
\| G(u_n(s))\|_{{\mathcal L}(K;V)},\] 
 the growth condition \eqref{growthG_V} and Young's inequality imply
\[ \tilde{T}_2+\tilde{T}_3(t) \leq C(p,T,\mbox{\rm Tr } Q, \tilde{K}_0, \tilde{K}_1) 
\Big[ 1+\int_0^{t\wedge \tilde{\tau}_N} \|u_n(s)\|_H^{2p} + \| \nabla u_n(s)\|_{\LL^2}^{2p} ds\Big].
\] 
The growth condition \eqref{growthG_V}, the Gundy  and Young inequalities imply that for $\tilde{\beta}\in (0,1)$, 
\begin{align*}
\EE\Big( \sup_{s\leq t} \tilde{T}_1(s) \Big) \leq &\;  C(p) \EE\Big( \Big\{ \int_0^{t\wedge \tilde{\tau}_N} \big[ \tilde{K}_0 + \tilde{K}_1 \| u_n(s)\|_V^2\big]
\; \| \nabla u_n(s)\|_{\LL^2}^{4p-2} \, \mbox{\rm Tr } Q\, ds \Big\}^{\frac{1}{2}} \Big) \\
\leq &\;  \tilde{\beta} \EE\Big( \sup_{s\leq t} \|u_n(s\wedge \tilde{\tau}_N)\|_{\LL^2}^{2p} \Big) 
+ \tilde{\beta} \EE\Big( \sup_{s\leq t} \|\nabla u_n(s\wedge \tilde{\tau}_N)\|_{\LL^2}^{2p}  \Big) \\
&\; + C(\tilde{\beta}, \mbox{\rm Tr } Q, \tilde{K}_0, \tilde{K}_1) \Big[ 1+ \EE
\Big( \int_0^t \| \nabla u_n(s\wedge \tilde{\tau}_N)\|_{\LL^2}^{2p} ds\Big) \Big].
\end{align*} 

\noindent Let $\rho\in (0,\nu)$ and $\tilde{a}\in (0,a)$. Using  \eqref{1.22} for $\alpha >1$ and \eqref{1.22Bis} for $\alpha =1$,
  \eqref{apriori_H} and the Gronwall lemma,  we deduce
 \[ \EE\Big( \sup_{s\leq \tilde{\tau}_N} \|u_n(s)\|_V^{2p}\Big)   \leq C\big[ 1+ \EE(\|u_0\|_V^{2p})\big].
 \] 
 for some positive constant $C$ which does not depend on $N$ and $n$. For fixed $n$, letting $N\to \infty$ and using the monotone
  convergence theorem we deduce 
 $u_n\in L^{2p}(\Omega; L^\infty(0,T;V))$. 
Plugging this  in  
\eqref{Ito_unV_p} and taking expected values, we conclude the proof of \eqref{apriori_V}. 
\end{proof}

\subsection{Proof of global well-posedness of the solution} \label{Ap3}

\noindent  The proof of Theorem \ref{th_gwp} is classical  and uses the upper estimates \eqref{1.12} and \eqref{1.14}
 for the uniqueness; see e.g.  
\cite{BeMi_anisotropic} for details.  
 \bigskip
 
 \noindent {\bf Acknowledgements.}   
 Annie~Millet's research has been conducted within the FP2M federation (CNRS FR 2036). 
\bigskip

\noindent{\bf Declarations.}

\noindent{\bf Funding.} Hakima Bessaih was partially supported by Simons Foundation grant: 582264 and NSF grant DMS: 2147189. \\
This   research was  started
while both authors stayed at the Mathematisches Forschung Institute Oberwolfach during a Research in Pairs program. 
They want to thank the MFO for the financial support and excellent working conditions.

\noindent {\bf  Conflicts of interest.} The authors have no conflicts of interest to declare that are
 relevant to the content of this article.

\noindent{\bf  Availability of data and material.} Data sharing not applicable to this article as
no datasets were generated or analysed during the current study.

\end{document}